\pdfoutput=1
\documentclass[10pt,leqno]{amsart}

\usepackage[T1]{fontenc}
\usepackage[utf8]{inputenc}

\usepackage{graphicx}
\usepackage{indentfirst,csquotes}

\usepackage{amssymb,amsthm,amsmath}
\usepackage{array}
\usepackage{booktabs}
\usepackage{bussproofs}
\usepackage{tikz}
\usetikzlibrary{arrows.meta,positioning}
\usepackage{xcolor,paralist,hyperref,fancyhdr,etoolbox}
\providecommand{\texorpdfstring}[2]{#1}

\theoremstyle{plain}
\newtheorem{thm}{Theorem}[section]
\newtheorem{prop}[thm]{Proposition}
\newtheorem{lem}[thm]{Lemma}
\newtheorem{cor}[thm]{Corollary}
\theoremstyle{definition}
\newtheorem{dfn}[thm]{Definition}
\newtheorem{exa}[thm]{Example}
\newtheorem{con}[thm]{Convention}
\theoremstyle{remark}
\newtheorem{rem}[thm]{Remark}
\numberwithin{equation}{section}

\hypersetup{ colorlinks=true, linkcolor=black, filecolor=black, urlcolor=black,
             citecolor=black }

\newcommand{\seq}{\Rightarrow}
\newcommand{\arr}{\rightarrow}
\newcommand{\sT}{\mathsf{T}}
\newcommand{\sF}{\mathsf{F}}
\newcommand{\Ptt}{\Pi^{\sT}}
\newcommand{\bti}{\mathsf{BT}^{i}}
\newcommand{\btd}{\mathsf{BT}^{i}_{\downarrow}}

\newcommand{\Gti}{\mathsf{G3T}^{i}}
\newcommand{\Gzi}{\mathsf{G0T}^{i}}
\newcommand{\Ngi}{\mathsf{NgT}^{i}}
\newcommand{\Gip}{\mathsf{G3ip}}
\newcommand{\Gfip}{\mathsf{G4ip}}
\newcommand{\LK}{\mathrm{LK}}
\newcommand{\LJ}{\mathrm{LJ}}
\newcommand{\NJ}{\mathrm{NJ}}

\newcommand{\wkL}{\mathrm{wkL}}
\newcommand{\wkR}{\mathrm{wkR}}
\newcommand{\ctrL}{\mathrm{ctrL}}
\newcommand{\ctrR}{\mathrm{ctrR}}
\newcommand{\Sub}{\mathrm{Sub}}
\newcommand{\Subpm}{\mathrm{Sub}^{\pm}}
\newcommand{\oa}{\mathrm{oa}}
\newcommand{\ef}{\mathrm{end}}
\newcommand{\wt}{\mathrm{w}}
\newcommand{\forces}{\Vdash}
\newcommand{\nforces}{\nVdash}

\newcommand{\acceptednote}{%
  \begin{center}
  \fbox{\begin{minipage}{0.88\textwidth}
  \centering\small
  \textsc{Accepted for publication in} \textit{Reports on Mathematical Logic}\\[3pt]
  \footnotesize This is the author's accepted manuscript.
  The version of record will appear in \textit{Reports on Mathematical Logic} \textbf{61} (2026).
  \end{minipage}}
  \end{center}
  \medskip}

\begin{document}

\title[Sequent-style tableaux for intuitionistic propositional logic]{Sequent-style tableaux for\\ intuitionistic propositional logic}

\author[S.\ Cuconato]{Simone Cuconato}

\address{Department of Physics,
\newline \indent University of Calabria,
\newline \indent 87036 Rende (CS), Italy}
\email{simone.cuconato@unical.it
\newline \indent ORCID iD: \url{https://orcid.org/0000-0003-0277-9575}}

\thanks{\textit{Accepted for publication in} \textbf{Reports on Mathematical Logic}.
This preprint is the author's accepted manuscript; the version of record will
appear in \emph{Reports on Mathematical Logic} \textbf{61} (2026).}

\subjclass[2020]{03F05, 03B20, 03B55, 03F03}

\keywords{intuitionistic propositional logic, sequent-style tableaux, sequent
calculus, natural deduction}

\date{}

\begin{abstract}
Sequent-style tableaux are a refutation calculus in which each node of the
refutation tree carries a finite \emph{block} of formulae and the structural
rules are absorbed into the data structure and the closure criterion.  In their
original, classical form they rest on an involutive De Morgan negation and on
closure upon a complementary pair.  We show that both may be dispensed with.
Replacing unsigned formulae by \emph{signed} ones, we obtain a block calculus
$\bti$ for intuitionistic propositional logic in which the whole of
intuitionism is carried by one rule, the rule decomposing $\sF(A\arr B)$, which
deletes the $\sF$-part of the context on passing to the child block.  The rules
so obtained are, up to the presentation, those of Fitting's signed tableaux;
what is new is the block format, in which the structural rules are absorbed
rather than admissible, and what follows from it.  We
identify the semantic reason for this rule and for the one other anomalous one: of
the signed compounds of the language, exactly those governed by the implication
fail to be locally decomposable, and the two failures are repaired,
respectively, by retaining the principal formula and by purging the context.  We
prove that $\bti$ is the multiple-succedent sequent calculus $\Gti$ read upside
down, that $\Gti$ admits the structural rules, and that $\bti$ is sound and
complete for Kripke semantics, with the finite model property and a
block-theoretic proof of the disjunction property.  We then transcribe the
calculus into a G0-style sequent calculus $\Gzi$ and into a natural deduction
system $\Ngi$ with general elimination rules, for which we prove a full
normalization theorem by bidirectional translation.  A depth-reducing variant
$\btd$, obtained by transcribing Dyckhoff's contraction-free calculus, restores
termination of refutation search.  A concluding section compares the three
formalisms and locates, in each, the exact place at which intuitionism resides.
\end{abstract}

\maketitle

\acceptednote

\section{Introduction}\label{sec:intro}

The calculus of \emph{sequent-style tableaux} was introduced as a hybrid
\cite{CuconatoIM,cuconato2025metodi}: it grafts the notation of Gentzen's
sequent calculus onto the branching architecture of analytic tableaux.  Each
node of a refutation tree carries not a single formula, as in Smullyan's
presentation \cite{Smullyan1968}, but a finite set $\Pi$ of formulae, a \emph{block}. The block associated with a sequent $\Gamma\seq\Delta$ is $\Gamma\cup\lnot[\Delta]$, so that the
sequent arrow is reabsorbed into a one-sided list. A branch does not accumulate
formulae as it descends: it transports the whole block, decomposing at each step
a principal formula and inheriting the rest unchanged. The calculus has no cut rule, and this is not a theorem about it but a feature of its design: no rule introduces, on passing
from a block to its children, a formula that is not a component of the
principal one, so there is nowhere for a cut to sit.  Its structural rules are absorbed rather than eliminated: contraction into the set-theoretic reading of blocks, weakening into the closure criterion.  It stands in a bidirectional correspondence with cut-free $\LK$ \cite{CuconatoIM}, and on the strength of that correspondence it has been inserted into the
architecture of G3- and G0-style sequent calculi and general-elimination natural
deduction \cite{CuconatoKJM}, in the manner of Kamide and Negri \cite{KamideNegri2025}.

All of this is classical, and it is classical for two reasons which it is worth
keeping apart.  The first is the treatment of negation: $\lnot\lnot A$ is
deductively identified with $A$, and the negations of the compounds are
governed by the De Morgan laws.  The second is the closure criterion: a block
closes as soon as it contains a complementary pair $\theta,\lnot\theta$, and
the block itself makes no distinction between the formulae that come from the
antecedent and those that come from the negated succedent.  The question that
occasions the present paper is whether the design survives the removal of both,
and what has to be paid for its survival.

Our answer is that the design survives essentially intact, and that the price
is remarkably local.  Signed formulae $\sT A$ and $\sF A$ replace the
involution: the sign does, at no cost, the work that the De Morgan laws did in
the classical calculus, and it does it while keeping every rule analytic.  The
closure criterion becomes closure on a pair $\sT p,\sF p$, or on $\sT\bot$.
With this much fixed, seven of the nine propositional rules are exactly what one
expects, and the whole of intuitionism is concentrated in the remaining two,
those that decompose an implication under the sign $\sF$: these delete, on
passing to the child block, every $\sF$-signed formula of the context.  We call
this deletion the \emph{purge}.  It is the block-level image of the rule
$\arr\!\mathrm{R}$ of the multiple-succedent intuitionistic sequent calculus of
Maehara \cite{Maehara1954} and Dragalin \cite{Dragalin1988}, the rule that
empties the succedent in its premiss; and the semantic reason for it is the
monotonicity of forcing, which licenses the transport of the $\sT$-signed
formulae to a later world and forbids the transport of the $\sF$-signed ones.

One structural observation orients everything that follows.  In the classical
first-order block calculus the two features that break the uniformity of the
propositional rules are the \emph{persistence} of the principal formula in the
rules of type $\gamma$ and the \emph{freshness} of the parameter in the rules of
type $\delta$.  Both reappear here, at the propositional level and without a
quantifier in sight.  The rule for $\sT(A\arr B)$ retains its principal formula
in one of its children --- which is precisely the repetition that Dragalin and
Troelstra introduce in $\arr\!\mathrm{L}$ in order to make contraction
admissible --- and the rule for $\sF(A\arr B)$ constrains the context it passes
on.  Relative to the classical calculus, then, intuitionistic propositional
logic behaves like a quantificational, or modal, extension of it.  Through the
G\"odel--McKinsey--Tarski translation this is not an analogy but a fact, and we
return to it in Section~\ref{sec:remarks}.

It is right to say at once what is inherited here and what is not. The rules of $\bti$ are, up to the presentation, those of the unprefixed signed tableau system for intuitionistic propositional logic given by Fitting \cite{Fitting1969}. The signs themselves, the branching rules for conjunction and disjunction, the survival of an implication under the sign $\sT$, and above all the deletion of the $\sF$-signed part in the rules governing an implication
under the sign $\sF$: all of these are to be found there.  So, in consequence,
is the adequacy of the system for Kripke semantics, which we prove again below
only because the block presentation and the sequent correspondence require the
proofs in a particular form. That such tableaux and the multiple-succedent calculi of
Maehara \cite{Maehara1954} and Dragalin \cite{Dragalin1988} are two faces of one
system is likewise not a discovery of ours; it is recorded, in one form or
another, in \cite{Fitting1983} and \cite{TroelstraSchwichtenberg2000}.

What the present paper adds is of four kinds.  First, the presentation.  A block is a set and its principal formula is consumed, so that the structural
rules are not merely admissible but absorbed: into the data structure, into the
closure criterion, and into the survival of the implication.  It is exactly
because everything else is consumed that this last survival becomes a marked
exception, visible and open to analysis.  In a system whose rules only add to a
set it passes unremarked. Second, and this we take to be the substance of the paper,
an explanation of the two anomalous rules in place of a stipulation of them:
Proposition~\ref{prop:locdec} isolates the property of \emph{local
decomposability}, shows that among the signed compounds exactly the two governed
by the implication lack it, and shows that the two failures are of different
kinds and call for two different repairs, retention in the one case and the
purge in the other; the same proposition then serves as a criterion for
admitting further connectives to the format. Third, the insertion of the
calculus into the architecture of G0-style sequent calculi and
general-elimination natural deduction, in the manner of Kamide and Negri
\cite{KamideNegri2025}, with the full normalization theorem of
Section~\ref{sec:nd}; no natural deduction counterpart of the intuitionistic
tableau system seems to have been given before.  Fourth, three smaller items: a
proof of the disjunction property internal to the block format
(Theorem~\ref{thm:dp}), the transcription of Dyckhoff's contraction-free
calculus into blocks together with a termination measure verified rule by rule
(Theorem~\ref{thm:term}), and the comparison of Section~\ref{sec:comp}.  The
adequacy of that depth-reducing variant is Dyckhoff's theorem
\cite{Dyckhoff1992}, and we cite it rather than reprove it.

The plan of the paper is as follows.  Section~\ref{sec:lang} fixes the language
and the Kripke semantics.  Section~\ref{sec:block} introduces the block
calculus $\bti$, isolates its two intuitionistic rules, derives the rules for
negation, establishes the subformula property and the admissibility of
weakening, and works three examples.  Section~\ref{sec:g3} introduces the
multiple-succedent sequent calculus $\Gti$, proves that the block calculus is
$\Gti$ read upside down, and establishes the structural properties of $\Gti$ up
to the admissibility of cut.  Section~\ref{sec:sem} proves soundness and
completeness with respect to Kripke models, and draws three corollaries: the
finite model property with decidability, the disjunction property by a purely
block-theoretic argument, and the collapse of multiple to single succedents.
Section~\ref{sec:nd} transcribes the calculus into a G0-style sequent calculus
$\Gzi$, obtains cut elimination for it by transfer, introduces the natural
deduction system $\Ngi$ with general elimination rules, and proves the full
normalization theorem by bidirectional translation.  Section~\ref{sec:term}
gives a depth-reducing variant $\btd$ of the block calculus, transcribing
Dyckhoff's contraction-free calculus \cite{Dyckhoff1992}, and verifies a
termination measure rule by rule.  Section~\ref{sec:comp} compares the three
formalisms.  Section~\ref{sec:remarks} collects concluding remarks.

The model for the architecture of the paper is Ili\'c's study of an alternative
natural deduction for intuitionistic propositional logic \cite{Ilic2016}, whose
sequent calculus $GI$ is the closest published relative of the system $\Gzi$ of
Section~\ref{sec:nd}: the two agree on the treatment of implication and differ
in the handling of conjunction and disjunction, with consequences for the
natural deduction systems they generate.  We take up the comparison in
Section~\ref{sec:comp}.  For the classical tableau tradition on which the block
calculus draws we refer to Smullyan \cite{Smullyan1968}, and for the history of
intuitionistic logic itself to Troelstra \cite{Troelstra1988}.

Figure~\ref{fig:network} records the calculi and the results that tie them
together, and may be used as a map of what follows.

\begin{figure}[htb]
\centering
\begin{tikzpicture}[
  >={Latex[length=2.0mm]},
  bx/.style={draw, rounded corners=1.5pt, align=center, inner sep=4pt,
             minimum height=9mm, text width=45mm, line width=0.4pt},
  sm/.style={draw, rounded corners=1.5pt, align=center, inner sep=4pt,
             minimum height=9mm, text width=30mm, line width=0.4pt},
  gh/.style={draw, densely dashed, rounded corners=1.5pt, align=center,
             inner sep=4pt, minimum height=9mm, text width=45mm,
             line width=0.35pt},
  lab/.style={font=\scriptsize, align=left, inner sep=3pt, text width=36mm},
  slab/.style={font=\scriptsize, align=center, inner sep=2pt, text width=34mm},
  eq/.style={<->, line width=0.45pt, shorten >=1.5pt, shorten <=1.5pt},
  ar/.style={->, line width=0.45pt, shorten >=1.5pt, shorten <=1.5pt},
]
\useasboundingbox (-7.4,-1.2) rectangle (7.4,10.9);
\node[gh] (cl) at (0,10.1)
  {\small classical block calculus\\[1pt]\scriptsize unsigned blocks, involutive $\lnot$};
\node[bx] (bt) at (0,8.0)
  {\small$\bti$\\[1pt]\scriptsize signed blocks, purge};
\node[sm] (td) at (-5.6,8.0)
  {\small$\btd$\\[1pt]\scriptsize depth-reducing};
\node[bx] (g3) at (0,5.9)
  {\small$\Gti$\\[1pt]\scriptsize multiple succedent, G3-style};
\node[bx] (g1) at (0,3.8)
  {\small$\Gip$\\[1pt]\scriptsize single succedent, G3-style};
\node[bx] (g0) at (0,1.7)
  {\small$\Gzi$\\[1pt]\scriptsize G0-style, split contexts};
\node[bx] (ng) at (0,-0.4)
  {\small$\Ngi$\\[1pt]\scriptsize general elimination rules};

\draw[ar] (cl) -- node[lab,right]
  {signs in place of the\\ involution; the purge\\ (\S\ref{sec:block})} (bt);
\draw[eq] (bt) -- node[lab,right]
  {Thm.~\ref{thm:corr}: the blocks\\ are $\Gti$ read upside down} (g3);
\draw[eq] (g3) -- node[lab,right]
  {Prop.~\ref{prop:collapse}: collapse\\ of the succedent} (g1);
\draw[eq] (g1) -- node[lab,right]
  {Thm.~\ref{thm:g0equiv}: equivalence;\\ Thm.~\ref{thm:g0cut}: cut elimination} (g0);
\draw[eq] (g0) -- node[lab,right]
  {Thms.~\ref{thm:ndtosc}, \ref{thm:sctond}: translations;\\
   Thm.~\ref{thm:norm}: normalization} (ng);
\draw[eq] (bt) -- (td);
\node[slab] at (-3.9,7.05)
  {Thm.~\ref{thm:btdadeq}: the same\\ simple blocks;
    Thm.~\ref{thm:term}:\\ termination};
\end{tikzpicture}
\caption{The calculi of the paper and the results that connect them.  A double
arrow is an equivalence of derivability, in the precise sense fixed by the
result that labels it; the single arrow is the passage of
Section~\ref{sec:block}, in which the two features responsible for the
classicality of the block calculus are replaced.  The systems drawn below the
block calculus are calculi of synthesis, in which a derivation is built upward
from initial sequents or from assumptions; $\bti$ and $\btd$ are calculi of
analysis.}
\label{fig:network}
\end{figure}

\section{The language and its Kripke semantics}\label{sec:lang}

\begin{dfn}[Language]\label{def:lang}
Let $\mathit{Var}=\{p,q,r,\dots\}$ be a countable set of propositional
variables.  The formulae of $\mathcal L$ are generated by
\[
A ::= p \mid \bot \mid A\land A \mid A\lor A \mid A\arr A .
\]
We write $A,B,C,D$ for formulae and $\Gamma,\Delta,\Theta$ for finite multisets
of formulae.  Negation is defined: $\lnot A:=A\arr\bot$.  An \emph{atom} is a
propositional variable or $\bot$.  We write $\Sub(A)$ for the set of
subformulae of $A$, and extend the notation to sets and multisets.
\end{dfn}

Taking $\lnot$ as defined rather than primitive shortens every induction by two
cases and costs nothing: the rules that a primitive negation would receive are
derived in Proposition~\ref{prop:negrules} below, and they are exactly the ones
that the reader familiar with the classical calculus would expect.

\begin{dfn}[Kripke models]\label{def:kripke}
A \emph{Kripke model} is a triple $\mathcal M=(W,\le,V)$ where $W$ is a
non-empty set, $\le$ is a preorder on $W$, and $V\colon W\to\mathcal
P(\mathit{Var})$ is monotone, that is, $w\le v$ implies $V(w)\subseteq V(v)$.
Forcing is defined by
\begin{align*}
w&\forces p &&\text{iff}\quad p\in V(w),\\
w&\nforces \bot, \\
w&\forces A\land B &&\text{iff}\quad w\forces A \text{ and } w\forces B,\\
w&\forces A\lor B &&\text{iff}\quad w\forces A \text{ or } w\forces B,\\
w&\forces A\arr B &&\text{iff}\quad \text{for every } v\ge w,\ v\forces A
\text{ implies } v\forces B .
\end{align*}
A sequent $\Gamma\seq\Delta$ is \emph{valid} if for every model and every
$w$, if $w\forces A$ for all $A\in\Gamma$ then $w\forces B$ for some
$B\in\Delta$.
\end{dfn}

\begin{lem}[Monotonicity]\label{lem:mono}
For every formula $A$, every model $\mathcal M$ and all $w\le v$ in $\mathcal
M$: if $w\forces A$ then $v\forces A$.
\end{lem}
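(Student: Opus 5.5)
The plan is a structural induction on $A$, with $w\le v$ fixed but arbitrary in the induction hypothesis: for every proper subformula and for all pairs of worlds $w\le v$, forcing at $w$ implies forcing at $v$.

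For the atomic cases, suppose first that $A=p$ and $w\forces p$. Then $p\in V(w)\subseteq V(v)$ by the monotonicity of $V$ in Definition~\ref{def:kripke}, so $v\forces p$. If $A=\bot$, the hypothesis $w\forces\bot$ never holds, so the claim is vacuous.

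For conjunction and disjunction the argument is immediate from the clauses. If $w\forces B\land C$, then $w\forces B$ and $w\forces C$. The induction hypothesis gives $v\forces B$ and $v\forces C$, hence $v\forces B\land C$. Disjunction is handled the same way, using whichever disjunct is forced at $w$.

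The implication case is the only one that uses the shape of the forcing clause rather than the induction hypothesis. Here the hypothesis is not even needed: transitivity of $\le$ does the work. Suppose $w\forces B\arr C$ and let $u\ge v$ with $u\forces B$. Since $w\le v\le u$, transitivity of the preorder gives $w\le u$. The clause for $w\forces B\arr C$ then yields $u\forces C$. As $u$ was arbitrary, $v\forces B\arr C$.

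None of these steps is a real obstacle. The one point to be careful about is that the implication clause quantifies over all later worlds, so monotonicity there rests on transitivity of $\le$ and not on the induction hypothesis. This is where the requirement that $\le$ be a preorder, rather than an arbitrary relation, is used.
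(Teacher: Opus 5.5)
Your proof is correct and follows the paper's argument exactly. Both use structural induction on $A$, with monotonicity of $V$ for variables, vacuity for $\bot$, and the induction hypothesis for $\land$ and $\lor$. In both, the implication case rests on transitivity of $\le$ alone, with no appeal to the induction hypothesis.
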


\begin{proof}
By induction on the structure of $A$.  For $A=p$ the claim is the monotonicity
of $V$; for $A=\bot$ it is vacuous; for $A=B\land C$ and $A=B\lor C$ it follows
at once from the induction hypothesis.  Let $A=B\arr C$, let $w\forces B\arr C$
and $w\le v$, and let $u\ge v$ with $u\forces B$.  By transitivity $u\ge w$, so
$u\forces C$ by the forcing clause at $w$.  Hence $v\forces B\arr C$.
\end{proof}

Lemma~\ref{lem:mono} is the only semantic fact used in the design of the
calculus, and it is used exactly once: it is what allows a rule to transport
the $\sT$-signed part of a block to a later world, and what forbids it to
transport the $\sF$-signed part.  The asymmetry of the two signs under
monotonicity is, in a precise sense, the whole of the intuitionistic content of
the system.

\section{The block calculus \texorpdfstring{$\bti$}{BTi}}\label{sec:block}

\subsection{Signed blocks and closure}

\begin{dfn}[Signed formulae and blocks]\label{def:block}
A \emph{signed formula} is an expression $\sT A$ or $\sF A$ with $A\in\mathcal
L$.  A \emph{block} $\Pi$ is a finite set of signed formulae.  We write
$\Ptt=\{\sT A: \sT A\in\Pi\}$ for the \emph{$\sT$-part} of $\Pi$, and, for a
multiset $\Gamma$ of formulae, $\sT[\Gamma]=\{\sT A: A\in\Gamma\}$ and
$\sF[\Gamma]=\{\sF A:A\in\Gamma\}$.  The block associated with the sequent
$\Gamma\seq\Delta$ is
\[
\Pi(\Gamma\seq\Delta)\;=\;\sT[\Gamma]\cup\sF[\Delta].
\]
\end{dfn}

\begin{dfn}[Closure]\label{def:closure}
A block $\Pi$ is \emph{closed} if either $\sT p\in\Pi$ and $\sF p\in\Pi$ for
some propositional variable $p$, or $\sT\bot\in\Pi$.  A closed block terminates
its branch and is marked $\times$.  A block that is neither closed nor the
premiss of any applicable rule is \emph{completed open} and is marked $\odot$;
by Definition~\ref{def:rules} this happens exactly when every one of its signed
formulae is $\sT p$, $\sF p$ or $\sF\bot$.
\end{dfn}

Closure is stipulated on atoms only, as in the initial sequents of a G3-style
calculus; that it extends to arbitrary formulae is Lemma~\ref{lem:genclosure}
below.  Observe that $\sF\bot$ never contributes to closure: every world
falsifies $\bot$, so a block containing $\sF\bot$ is not thereby unrealizable.

\subsection{The rules}

\begin{dfn}[The rules of $\bti$]\label{def:rules}
In each rule the principal formula is displayed to the right of the context
$\Pi$; a rule with two children displays them separated by $\mid$.  The rules
fall into four groups.

\smallskip
\noindent\emph{Rules of type $\alpha$} (one child, principal consumed):
\[
\frac{\Pi,\ \sT(A\land B)}{\Pi,\ \sT A,\ \sT B}\ \ \sT\!\land
\qquad\qquad
\frac{\Pi,\ \sF(A\lor B)}{\Pi,\ \sF A,\ \sF B}\ \ \sF\!\lor
\]

\smallskip
\noindent\emph{Rules of type $\beta$} (two children, principal consumed):
\[
\frac{\Pi,\ \sT(A\lor B)}{\Pi,\ \sT A\ \mid\ \Pi,\ \sT B}\ \ \sT\!\lor
\qquad\qquad
\frac{\Pi,\ \sF(A\land B)}{\Pi,\ \sF A\ \mid\ \Pi,\ \sF B}\ \ \sF\!\land
\]

\smallskip
\noindent\emph{Rule with persistence} (two children; the principal formula
survives in the left one):
\[
\frac{\Pi,\ \sT(A\arr B)}{\Pi,\ \sT(A\arr B),\ \sF A\ \ \mid\ \ \Pi,\ \sT B}
\ \ \sT\!\arr
\]

\smallskip
\noindent\emph{Rule of transition} (one child; the $\sF$-part of the context is
purged):
\[
\frac{\Pi,\ \sF(A\arr B)}{\Ptt,\ \sT A,\ \sF B}\ \ \sF\!\arr
\]

\noindent
A \emph{tableau} for a block $\Pi$ is a finite tree rooted at $\Pi$ and grown by
these rules; it is \emph{closed} if every leaf is a closed block.  We say that
$\Pi$ is \emph{refutable} if it admits a closed tableau, and we write
$\bti\vdash_{n}\Pi$ if it admits a closed tableau of height at most $n$.  A
formula $A$ is \emph{provable} if $\{\sF A\}$ is refutable.
\end{dfn}

Three points deserve comment before we proceed.

First, no rule applies to $\sT p$, $\sF p$, $\sF\bot$ or $\sT\bot$: the first
three are inert and the last closes the block.  Every rule therefore has a
compound principal formula, and the calculus is a decomposition procedure in
the strict sense.

Second, the rule $\sT\!\arr$ retains its principal formula in the child that
carries $\sF A$.  Without the retention the calculus would be incomplete: from
$\sT(p\arr q)$ one may need to use the implication again after $\sF p$ has been
further analysed, and the classical device of putting a second copy in the
block is unavailable here because blocks are sets.  The retention is exactly
the repetition of the principal formula in the rule $\arr\!\mathrm L$ of
$\Gip$ and of the multiple-succedent calculi
\cite{Dragalin1988,TroelstraSchwichtenberg2000,NegriVonPlato2001}, and it is
what makes contraction admissible; it is also, in the terminology of the
first-order classical block calculus, what makes $\sT\!\arr$ a rule of type
$\gamma$.

Third, and this is the only place where the calculus is not classical, the rule
$\sF\!\arr$ replaces the context $\Pi$ by its $\sT$-part.  The reason is
semantic and is given in full in Theorem~\ref{thm:sound}: falsifying $A\arr B$
at $w$ requires passing to some $v\ge w$, and only the $\sT$-signed formulae of
$\Pi$ are guaranteed to survive the passage.

\begin{prop}[Derived rules for negation]\label{prop:negrules}
Under the definition $\lnot A:=A\arr\bot$, the following rules are derivable in
$\bti$, in the sense that a block has a closed tableau if and only if it has one
in which every application of $\sT\!\arr$ and $\sF\!\arr$ whose principal
formula is a negation is replaced by an application of the corresponding rule
below:
\[
\frac{\Pi,\ \sT\lnot A}{\Pi,\ \sT\lnot A,\ \sF A}\ \ \sT\!\lnot
\qquad\qquad
\frac{\Pi,\ \sF\lnot A}{\Ptt,\ \sT A}\ \ \sF\!\lnot
\]
\end{prop}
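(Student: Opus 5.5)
The plan is to prove both directions by a local transformation of closed tableaux, treating one application at a time and arguing by induction on the number of applications of $\sT\!\arr$ and $\sF\!\arr$ whose principal formula is a negation. The key observation is that each original rule, applied to a negation, has exactly one child that is immediately closed or trivially disposable, and the surviving child coincides with the conclusion of the derived rule.

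\emph{From $\sT\!\arr$ to $\sT\!\lnot$.} Take $A\arr B$ with $B=\bot$. The rule $\sT\!\arr$ applied to $\Pi,\sT\lnot A$ has the two children $\Pi,\sT\lnot A,\sF A$ and $\Pi,\sT\bot$. The right child is closed by Definition~\ref{def:closure}, because it contains $\sT\bot$. Deleting it therefore leaves precisely the one-child rule $\sT\!\lnot$, and the subtree above the left child is kept unchanged. In the converse direction, an application of $\sT\!\lnot$ is re-expanded into $\sT\!\arr$ by adding the closed leaf $\Pi,\sT\bot$. Neither operation touches any other leaf, so closedness is preserved in both directions.

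\emph{From $\sF\!\arr$ to $\sF\!\lnot$.} The rule $\sF\!\arr$ applied to $\Pi,\sF\lnot A$ yields $\Ptt,\sT A,\sF\bot$, while $\sF\!\lnot$ yields $\Ptt,\sT A$. These differ only by the inert formula $\sF\bot$. I would prove a small auxiliary fact: if $\Sigma$ contains no $\sF\bot$, then $\Sigma$ has a closed tableau if and only if $\Sigma,\sF\bot$ does, and the tableau can be chosen of the same shape.

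\begin{itemize}
\item The left-to-right direction is weakening by an $\sF$-formula. Add $\sF\bot$ to every block along the tableau until an application of $\sF\!\arr$ (or a derived $\sF\!\lnot$) purges it.
\item The right-to-left direction deletes $\sF\bot$ from every block up to its first purge. No rule has $\sF\bot$ as its principal formula, and $\sF\bot$ never contributes to closure. Every rule application and every closed leaf therefore remains legitimate.
\end{itemize}

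This is a simple induction on the height of the tableau, with the purge as the only case needing comment, since there the two tableaux coincide from that point on. The auxiliary fact can be proved directly; it does not need the general weakening result stated later in the paper.

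The main obstacle is bookkeeping rather than substance. Because blocks are sets, adding $\sF\bot$ may coincide with a block that already contains it, in which case it is simply the same block. It must also be checked that replacing one application does not disturb other negation applications in the subtree; this is why the induction runs on their number, always transforming, say, a lowest one first. Once the two local replacements and the $\sF\bot$ lemma are in place, iterating them over the whole tableau gives the stated equivalence.
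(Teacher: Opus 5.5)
Your proposal is correct and follows the paper's own proof. For $\sT\!\lnot$ you drop or adjoin the closed leaf $\Pi,\sT\bot$, and for $\sF\!\lnot$ you delete or adjoin the inert $\sF\bot$ in the subtree; your auxiliary lemma and induction on the number of negation applications only spell out what the paper leaves to inspection. One detail to tighten: before the first purge, $\sF\bot$ can be reintroduced by $\sF\!\lor$ or $\sF\!\land$ (e.g.\ on $\sF(C\lor\bot)$), so the invariant should be that corresponding blocks differ at most by $\sF\bot$, and your height induction goes through once you drop the hypothesis ``$\Sigma$ contains no $\sF\bot$'' (the case $\sF\bot\in\Sigma$ being trivial).
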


\begin{proof}
For $\sT\!\lnot$: the rule $\sT\!\arr$ with principal $\sT(A\arr\bot)$ produces
the two children $\Pi,\sT\lnot A,\sF A$ and $\Pi,\sT\bot$.  The second is
closed by Definition~\ref{def:closure}, so a closed tableau for the first is
already a closed tableau for the whole; conversely, an application of
$\sT\!\lnot$ is turned into one of $\sT\!\arr$ by adjoining the closed leaf.
For $\sF\!\lnot$: the rule $\sF\!\arr$ with principal $\sF(A\arr\bot)$ produces
$\Ptt,\sT A,\sF\bot$, and $\sF\bot$ is inert, so it may be carried along
without ever being principal or contributing to closure; deleting it from every
block of the subtree yields a closed tableau for $\Ptt,\sT A$, and adjoining it
performs the converse transformation.
\end{proof}

The two derived rules are worth setting beside their classical counterparts.
In the classical block calculus the negation rules are the De Morgan ones and
are of types $\alpha$ and $\beta$; here $\sT\!\lnot$ is a rule with persistence
and $\sF\!\lnot$ is a rule of transition.  The classical involution
$\lnot\lnot A\rightsquigarrow A$ has no counterpart at all: under the sign
discipline, $\sT\lnot\lnot A$ is decomposed by $\sT\!\lnot$ into
$\sF\lnot A$, and $\sF\lnot A$ by $\sF\!\lnot$ into $\sT A$ together with a
purge --- and it is the purge that blocks the classical collapse.

\subsection{Why exactly two rules are anomalous}\label{subsec:anomalous}

Of the six rules of Definition~\ref{def:rules}, four decompose their principal
formula in the way one expects of a tableau, while $\sT\!\arr$ retains it and
$\sF\!\arr$ purges the context.  It would be unsatisfactory to leave these two
departures as stipulations, and they need not be left so: both follow from one
semantic fact about the implication, and they differ because the fact can fail
in two ways.

\begin{dfn}[Local decomposability]\label{def:locdec}
Let $\ast$ be a binary connective and $\mathsf S\in\{\sT,\sF\}$.  The signed
compound $\mathsf S(A\ast B)$ is \emph{locally decomposable} if there are
finitely many sets $S_{1},\dots,S_{k}\subseteq\{\sT A,\sF A,\sT B,\sF B\}$ such
that, in every Kripke model and at every world $w$, the block
$\{\mathsf S(A\ast B)\}$ is realized at $w$ if and only if some $S_{j}$ is
realized at $w$.
\end{dfn}

Local decomposability says that the semantic status of a signed compound at a
world is determined by the status of its immediate subformulae \emph{at that
same world}, and by finitely many alternatives.  It is the exact condition
under which a Smullyan-style rule of type $\alpha$ or $\beta$ is available: the
sets $S_{j}$ are the children.

\begin{prop}\label{prop:locdec}
For all formulae $A$ and $B$, the signed compounds $\sT(A\land B)$,
$\sF(A\land B)$, $\sT(A\lor B)$ and $\sF(A\lor B)$ are locally decomposable.
By contrast, if $A$ and $B$ are distinct propositional variables, neither
$\sT(A\arr B)$ nor $\sF(A\arr B)$ is; a fortiori the implication admits no
decomposition uniform in $A$ and $B$.
\end{prop}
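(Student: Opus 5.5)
The positive half is a direct check against the forcing clauses, taking "realized at $w$" to mean that $w\forces C$ for every $\sT C$ in the set and $w\nforces C$ for every $\sF C$ in it. The witnessing families are read off the rules of type $\alpha$ and $\beta$ of Definition~\ref{def:rules}: $\{\{\sT A,\sT B\}\}$ for $\sT(A\land B)$, $\{\{\sF A\},\{\sF B\}\}$ for $\sF(A\land B)$, $\{\{\sT A\},\{\sT B\}\}$ for $\sT(A\lor B)$ and $\{\{\sF A,\sF B\}\}$ for $\sF(A\lor B)$. In each case the required equivalence at $w$ is precisely the clause of Definition~\ref{def:kripke} for $\land$ or $\lor$ at $w$, or its negation, so no other world is involved.

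For the negative half, fix distinct variables $p,q$. A set $S\subseteq\{\sT p,\sF p,\sT q,\sF q\}$ is realized at $w$ iff the pair of truth values $(w\forces p,\ w\forces q)$ lies in a certain subset of $\{0,1\}^2$. This subset is empty if $S$ contains both $\sT p$ and $\sF p$, or both $\sT q$ and $\sF q$. Hence any finite disjunction $\bigvee_j S_j$ is realized at $w$ iff the local valuation $(V(w)\cap\{p,q\})$ belongs to a fixed set $R\subseteq\{0,1\}^2$. It therefore suffices, for each of $\sT(p\arr q)$ and $\sF(p\arr q)$, to exhibit two worlds, possibly in different models, with the \emph{same} local valuation of $p,q$ but opposite status of the signed compound. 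No fixed $R$ can then work.

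Take worlds $w$ at which $p$ and $q$ are both unforced. In the one-world model with $V(w)=\emptyset$, $w\forces p\arr q$ vacuously, so $\sT(p\arr q)$ is realized and $\sF(p\arr q)$ is not. In the two-world model $w\le v$ with $V(w)=\emptyset$ and $V(v)=\{p\}$, $w\nforces p\arr q$, so $\sF(p\arr q)$ is realized and $\sT(p\arr q)$ is not. Both worlds have local valuation $(0,0)$, so neither signed compound is locally decomposable. I would also record that at worlds where $p$ is forced or $q$ is forced the status is in fact determined locally, by monotonicity (Lemma~\ref{lem:mono}); the failure is confined to the case $(0,0)$. This distinction explains the two different repairs the paper is preparing. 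The "a fortiori" clause is then immediate: a decomposition uniform in $A,B$ would, instantiated at $A=p$, $B=q$, yield a witnessing family for that instance.

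The only delicate step is stating the reduction cleanly: that realization of a finite union of such sets depends only on the local valuation of $p$ and $q$. This holds because $p$ and $q$ are atoms, and it is exactly where distinctness of variables is needed. If $A=B=p$, then $p\arr p$ is valid and $\sT(p\arr p)$ is decomposed by the family $\{\emptyset\}$.
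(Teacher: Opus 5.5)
Your proposal is correct and follows the paper's proof essentially verbatim: it uses the same four witnessing families and the same pair of models, namely the one-world model with empty valuation and the model $w\le v$ with $V(v)=\{p\}$, whose base worlds realize the same subsets of $\{\sT p,\sF p,\sT q,\sF q\}$ yet disagree on $p\arr q$. One small correction to your commentary: the reduction to the local valuation needs only that $p$ and $q$ are atoms, and distinctness is used in the counterexample itself (so that $v$ forces $p$ but not $q$), not in the reduction.
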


\begin{proof}
The four positive cases are immediate from Definition~\ref{def:kripke}:
$\{\sT(A\land B)\}$ is realized at $w$ exactly when $\{\sT A,\sT B\}$ is;
$\{\sF(A\land B)\}$ exactly when $\{\sF A\}$ or $\{\sF B\}$ is;
$\{\sT(A\lor B)\}$ exactly when $\{\sT A\}$ or $\{\sT B\}$ is; and
$\{\sF(A\lor B)\}$ exactly when $\{\sF A,\sF B\}$ is.  These four equivalences
are the rules $\sT\!\land$, $\sF\!\land$, $\sT\!\lor$, $\sF\!\lor$.

For the negative cases take $A=p$ and $B=q$ distinct propositional variables,
and consider two models.  Let $\mathcal M_{1}$ have the single world $w_{1}$
with $V(w_{1})=\emptyset$, and let $\mathcal M_{2}$ have $w_{2}\le v$ with
$V(w_{2})=\emptyset$ and $V(v)=\{p\}$.  At $w_{1}$ and at $w_{2}$ the same
subsets of $\{\sT p,\sF p,\sT q,\sF q\}$ are realized, namely the subsets of
$\{\sF p,\sF q\}$; so no family $S_{1},\dots,S_{k}$ can distinguish the two
worlds.  The implication itself does distinguish them: $w_{1}\forces p\arr q$,
since no world
of $\mathcal M_{1}$ forces $p$, while $w_{2}\nforces p\arr q$, since
$v\forces p$ and $v\nforces q$.  Hence $\{\sT(p\arr q)\}$ is realized at
$w_{2}$ but not at $w_{1}$ and $\{\sF(p\arr q)\}$ at $w_{1}$ but not at
$w_{2}$, and neither can be locally decomposed.
\end{proof}

The two failures are of different kinds, and the calculus repairs them
differently.

For $\sT(A\arr B)$ the implication from left to right still holds: if
$w\forces A\arr B$ then, by reflexivity, $w\nforces A$ or $w\forces B$.  What
fails is the converse, and it fails because $w\nforces A$ carries no
information about the worlds above $w$.  Retaining the principal formula
repairs exactly this.  Indeed, for every $\Pi$ and every $w$ one has
\[
\Pi\cup\{\sT(A\arr B)\}\ \text{realized at}\ w
\iff
\Pi\cup\{\sT(A\arr B),\sF A\}\ \text{or}\ \Pi\cup\{\sT B\}\ \text{realized at}\ w,
\]
the right-to-left direction using, in its second disjunct, that $w\forces B$
implies $w\forces A\arr B$ by monotonicity.  Delete the retained occurrence and
the equivalence collapses to an implication.  The persistence of $\sT\!\arr$ is
thus not a device imported from the first-order calculus but the precise price
of restoring an equivalence, and it is for the same reason that the rule is
invertible (Lemma~\ref{lem:g3inv}) although its underlying semantic condition
is not.

For $\sF(A\arr B)$ no retention can help, because the falsification is not
witnessed at $w$ at all: it is witnessed at some $v\ge w$, and the only
information that survives the passage is the $\sT$-signed part, by
Lemma~\ref{lem:mono}.  The purge discards exactly what monotonicity does not
license one to keep, and it secures the implication the calculus needs: if
$\Pi\cup\{\sF(A\arr B)\}$ is realized at $w$, then $\Ptt\cup\{\sT A,\sF B\}$ is
realized at some $v\ge w$.  Here, unlike in the previous case, the converse
does \emph{not} hold --- the $\sF$-signed formulae of $\Pi$ need not be
falsified at $v$, and nothing can make them so --- and the residue of the repair
is exactly the failure of invertibility recorded in
Remark~\ref{rem:noinvR} below, which $\sF\!\arr$ alone among the rules of $\bti$
suffers.

Proposition~\ref{prop:locdec} also serves as a criterion of applicability.  A
connective may be added to the block format precisely when both of its signed
forms are locally decomposable; when one of them is not, one must ask whether
the failure is of the first kind, curable by retention, or of the second,
curable only by a transition.  Under the classical, single-world semantics the
question never arises, every signed compound being decomposable in the
corresponding sense; and that is why the propositional part of the classical
block calculus consists of rules of type $\alpha$ and $\beta$ alone.

\subsection{Analyticity, weakening, generalized closure}

\begin{dfn}[Signed subformulae]\label{def:signedsub}
For a block $\Pi$ put $\Sub(\Pi)=\bigcup\{\Sub(A) : \sT A\in\Pi \text{ or }
\sF A\in\Pi\}$ and
$\Subpm(\Pi)=\{\sT A,\sF A : A\in\Sub(\Pi)\}$.
\end{dfn}

\begin{prop}[Subformula property]\label{prop:subf}
Every signed formula occurring in a tableau for $\Pi$ belongs to
$\Subpm(\Pi)$.  Consequently every block occurring in a tableau for $\Pi$ is a
subset of the finite set $\Subpm(\Pi)$, and there are at most
$2^{|\Subpm(\Pi)|}$ distinct such blocks.
\end{prop}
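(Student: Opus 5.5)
The plan is to argue by induction on the depth of a node in the tableau, proving the stronger statement that every block occurring at depth $n$ is a subset of $\Subpm(\Pi)$. At depth $0$ the block is $\Pi$ itself. Every signed formula $\mathsf S A$ of $\Pi$ has $A\in\Sub(A)\subseteq\Sub(\Pi)$, so $\mathsf S A\in\Subpm(\Pi)$. This is the only place where the signed closure is needed in a trivial way: $\Subpm(\Pi)$ contains both signs of every subformula, so we never have to track which sign a subformula acquires.

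For the inductive step, suppose a block $\Sigma\subseteq\Subpm(\Pi)$ is the premiss of a rule of Definition~\ref{def:rules}, and check each child, using two observations.
\begin{itemize}
\item Every signed formula a child inherits from the context is already in $\Sigma$. In the rules of types $\alpha$ and $\beta$ the context $\Pi$ of the rule is carried over unchanged. In $\sT\!\arr$ it is carried over together with the retained principal formula, which is itself in $\Sigma$. In $\sF\!\arr$ only the subset $\Ptt$ is carried over. Hence inherited formulae lie in $\Subpm(\Pi)$.
\item Every newly added signed formula is $\sT C$ or $\sF C$ with $C$ an immediate subformula $A$ or $B$ of the principal formula $\mathsf S(A\ast B)\in\Sigma$. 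Since $A\ast B\in\Sub(\Pi)$ and $\Sub(\Pi)$ is closed under taking subformulae, both $A$ and $B$ lie in $\Sub(\Pi)$. Both signs of each are therefore in $\Subpm(\Pi)$, whatever sign the rule assigns; the rules $\sT\!\arr$ and $\sF\!\arr$ introduce $\sF A$ and $\sT A$ respectively.
\end{itemize}
It follows that every child is a subset of $\Subpm(\Pi)$.

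The consequences are then immediate. $\Sub(\Pi)$ is finite, being a finite union of the finite sets $\Sub(A)$, so $\Subpm(\Pi)$ is finite with at most $2|\Sub(\Pi)|$ elements. Every block in a tableau for $\Pi$ is an element of the power set of $\Subpm(\Pi)$, which gives at most $2^{|\Subpm(\Pi)|}$ distinct blocks.

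There is no real obstacle here. The one point needing care is that the rules $\sT\!\arr$ and $\sF\!\arr$ change the sign of $A$. This is precisely why the statement is phrased with $\Subpm$ rather than with sign-preserving subformulae, and why it suffices that $\Sub(\Pi)$ is closed downward.
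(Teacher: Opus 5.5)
Your proof is correct and follows essentially the paper's route. You argue by induction on node depth, noting that every formula of a child is either inherited from the parent block (with $\sF\!\arr$ only deleting more) or a signed immediate subformula of the principal formula, and then count via the power set of $\Subpm(\Pi)$. Your explicit remark that $\Subpm(\Pi)$ contains both signs of every subformula, which absorbs the sign changes in $\sT\!\arr$ and $\sF\!\arr$, is the point the paper's ``inspection of the six rules'' leaves implicit.
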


\begin{proof}
By induction on the depth of the node.  At the root the claim is trivial.  If a
node carries $\Pi'\subseteq\Subpm(\Pi)$, inspection of the six rules shows
that every signed formula of a child is either a signed formula of $\Pi'$ or a
signed immediate subformula of the principal formula of $\Pi'$; in either case
it lies in $\Subpm(\Pi)$.  The rule $\sF\!\arr$ only removes formulae in
addition, which cannot take a block out of $\Subpm(\Pi)$.  The counting is
immediate, $\Subpm(\Pi)$ being finite.
\end{proof}

\begin{lem}[Height-preserving weakening]\label{lem:weak}
If $\bti\vdash_{n}\Pi$ and $\Pi\subseteq\Pi'$, then $\bti\vdash_{n}\Pi'$.
\end{lem}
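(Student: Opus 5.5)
The plan is induction on $n$, carrying the extra formulae $\Pi'\setminus\Pi$ along through a closed tableau for $\Pi$ and checking that every step survives. If $n=0$, the closed tableau for $\Pi$ is the single block $\Pi$, which is closed. So $\Pi$ contains $\sT p,\sF p$ for some variable $p$, or contains $\sT\bot$. Closure in Definition~\ref{def:closure} is a membership condition, so it is upward closed under inclusion, and $\Pi'$ is closed as well.

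For the inductive step, suppose $\bti\vdash_{n+1}\Pi$ and let the root rule have principal formula $\varphi$, with $\Pi=\Sigma,\varphi$ and children $\Pi_1$ (and possibly $\Pi_2$) refutable in height at most $n$. Since $\varphi\in\Pi\subseteq\Pi'$, the same rule applies to $\Pi'$ with the same principal formula, and its context is $\Sigma'=\Pi'\setminus\{\varphi\}\supseteq\Sigma$. Blocks are sets, so the formula may also sit in the context, but this changes nothing. I then compare the children rule by rule.

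\begin{itemize}
\item For the $\alpha$ rules, the $\beta$ rules and $\sT\!\arr$, the child of $\Pi'$ is $\Sigma'$ together with the same added signed formulae. The retained $\sT(A\arr B)$ is among them in the left child of $\sT\!\arr$. Each child of $\Pi'$ therefore includes the corresponding child of $\Pi$.
\item For $\sF\!\arr$, the child of $\Pi$ is $\Sigma^{\sT},\sT A,\sF B$ and the child of $\Pi'$ is $\Sigma'^{\sT},\sT A,\sF B$. Here $\Sigma'^{\sT}\supseteq\Sigma^{\sT}$, because taking the $\sT$-part is monotone with respect to inclusion.
\end{itemize}

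In each case the induction hypothesis, applied at height $n$ to the pair of corresponding children, gives closed tableaux of height at most $n$ for the children of $\Pi'$. Hence $\bti\vdash_{n+1}\Pi'$.

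The only point needing care is the transition rule. One must check that the purge does not destroy the inclusion, and the monotonicity of $\Pi\mapsto\Ptt$ is exactly what secures this. A second minor point is the case where $\Pi'$ already contains a formula that the rule adds, for instance $\sF A\in\Pi'$ in $\sT\!\arr$. The set reading of blocks means the child is simply the union, so the inclusion still holds and no separate contraction step is needed. The induction hypothesis must be stated for all pairs $\Pi\subseteq\Pi'$ at a given height, so that it can be applied to the children.
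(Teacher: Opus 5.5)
Your proof is correct and follows essentially the same route as the paper's. Both argue by induction on $n$, use the fact that closure is a membership condition in the base case, and in the inductive step re-apply the same rule to $\Pi'$, checking that each child of $\Pi'$ contains the corresponding child of $\Pi$, with monotonicity of the $\sT$-part as the only point of care for $\sF\!\arr$. The paper phrases the inclusion via $\Delta=\Pi'\setminus\Pi$, giving children $\Pi_j\cup\Delta$, respectively $\Ptt\cup\Delta^{\sT}\cup\{\sT A,\sF B\}$, where you compare contexts $\Sigma\subseteq\Sigma'$; this is only a difference of bookkeeping.
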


\begin{proof}
By induction on $n$.  If $n=0$ then $\Pi$ is closed, and since closure is a
condition on membership, every superset of $\Pi$ is closed as well.  Let
$n>0$ and let the root step of a closed tableau for $\Pi$ apply a rule $R$ with
principal formula $\varphi\in\Pi$ and children $\Pi_{1}$ (and possibly
$\Pi_{2}$), each with $\bti\vdash_{n-1}\Pi_{j}$.  Put $\Delta=\Pi'\setminus\Pi$
and apply $R$ to $\Pi'$ with the same principal formula, which is legitimate
since $\varphi\in\Pi\subseteq\Pi'$.  If $R$ is not $\sF\!\arr$, the resulting
children are $\Pi_{j}\cup(\Delta\setminus\{\varphi\})\supseteq\Pi_{j}$; if $R$
is $\sF\!\arr$, the resulting child is
$(\Pi')^{\sT}\cup\{\sT A,\sF B\}
=\Ptt\cup\Delta^{\sT}\cup\{\sT A,\sF B\}\supseteq\Pi_{1}$.  In both cases the
induction hypothesis applies to each child and yields a closed tableau of height
at most $n-1$ for it, whence $\bti\vdash_{n}\Pi'$.
\end{proof}

\begin{lem}[Generalized closure]\label{lem:genclosure}
For every formula $A$ and every block $\Pi$, the block $\Pi\cup\{\sT A,\sF A\}$
is refutable.
\end{lem}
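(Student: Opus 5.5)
We argue by induction on the structure of $A$, proving the claim simultaneously for all blocks $\Pi$; Lemma~\ref{lem:weak} lets us weaken freely whenever a context needs enlarging. In the base cases, if $A=p$ the block $\Pi\cup\{\sT p,\sF p\}$ is closed by Definition~\ref{def:closure}. If $A=\bot$ it contains $\sT\bot$ and is closed.

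For $A=B\land C$ we apply $\sT\!\land$ to obtain $\Pi,\sT B,\sT C,\sF(B\land C)$. We then apply $\sF\!\land$, which gives the two children $\Pi,\sT B,\sT C,\sF B$ and $\Pi,\sT B,\sT C,\sF C$. Each is a superset of a block of the form $\Pi''\cup\{\sT D,\sF D\}$ with $D$ a proper subformula, so it is refutable by the induction hypothesis. The case $A=B\lor C$ is dual. We apply $\sT\!\lor$ to branch into $\Pi,\sT B,\sF(B\lor C)$ and $\Pi,\sT C,\sF(B\lor C)$. In each branch we apply $\sF\!\lor$ and close by the induction hypothesis on $B$ or on $C$. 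Since blocks are sets, if a principal formula already happens to lie in $\Pi$ nothing changes; the induction hypothesis is applied to the block actually obtained, which contains the required pair.

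The main obstacle is the case $A=B\arr C$, because of the purge. The order of the rules matters here, and we must fire $\sF\!\arr$ first. From $\Pi,\sT(B\arr C),\sF(B\arr C)$, the rule $\sF\!\arr$ yields $\Ptt,\sT(B\arr C),\sT B,\sF C$. The formula $\sT(B\arr C)$ survives the purge precisely because it is $\sT$-signed. Next we apply $\sT\!\arr$ to $\sT(B\arr C)$, which gives two children. The left child is $\Ptt,\sT(B\arr C),\sT B,\sF C,\sF B$, which contains $\sT B,\sF B$ and is refutable by the induction hypothesis on $B$. The right child is $\Ptt,\sT B,\sF C,\sT C$, which contains $\sT C,\sF C$ and is refutable by the induction hypothesis on $C$. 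Had we applied $\sT\!\arr$ first, the left child would carry $\sF B$ into a later purge and the argument would fail. Doing the transition first is what makes the case go through, and the retention in $\sT\!\arr$ is harmless here.

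Combining the cases completes the induction. Because the induction hypothesis is quantified over all blocks, the changing contexts in each case are absorbed without further comment.
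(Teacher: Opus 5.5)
Your proof is correct and is essentially the paper's. It uses the same induction on $A$ with the same rule sequences, including $\sF\!\arr$ before $\sT\!\arr$ in the implication case. There are two differences: the paper first reduces to $\Pi=\emptyset$ via Lemma~\ref{lem:weak}, while you quantify the induction hypothesis over all blocks (so weakening is not actually needed), and you apply $\sT\!\lor$ before $\sF\!\lor$ rather than after.

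One small correction to your aside: applying $\sT\!\arr$ first would not make the argument fail. The retained $\sT(B\arr C)$ survives the later purge in the left child and can be decomposed again there, so the early application is wasted rather than fatal.
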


\begin{proof}
By Lemma~\ref{lem:weak} it suffices to treat $\Pi=\emptyset$, that is, to refute
$\{\sT A,\sF A\}$; we argue by induction on the number of connectives of $A$.
If $A$ is a variable or $\bot$ the block is closed.  If $A=B\land C$, apply
$\sT\!\land$, obtaining $\{\sT B,\sT C,\sF(B\land C)\}$, and then $\sF\!\land$,
whose two children are $\{\sT B,\sT C,\sF B\}$ and $\{\sT B,\sT C,\sF C\}$;
each is refutable by the induction hypothesis and Lemma~\ref{lem:weak}.  If
$A=B\lor C$, apply $\sF\!\lor$ and then $\sT\!\lor$, and conclude in the same
way.  If $A=B\arr C$, apply $\sF\!\arr$ first: the child is
$\{\sT(B\arr C),\sT B,\sF C\}$, the principal formula being the only
$\sF$-signed formula purged.  Now apply $\sT\!\arr$: the children are
$\{\sT(B\arr C),\sT B,\sF C,\sF B\}$, refutable by the induction hypothesis for
$B$, and $\{\sT B,\sF C,\sT C\}$, refutable by the induction hypothesis for
$C$.  In every case the induction hypothesis is applied to a formula with
strictly fewer connectives.
\end{proof}

The order of the two applications in the last case is not an accident, and it
is the first symptom of a phenomenon that will recur: the transition rule must
be applied \emph{before} the rule with persistence, exactly as, in the
first-order classical calculus, a rule of type $\delta$ must precede the rules
of type $\gamma$ that use the constant it introduces.

\subsection{Absorption of the structural rules}

The calculus has no structural rules, and it is worth recording precisely how
each of them is accounted for.  Contraction is absorbed by
Definition~\ref{def:block}: a block is a set, so a signed formula is present or
absent and carries no multiplicity.  For the implication, where a set-theoretic
reading would not suffice --- the principal formula of $\arr\!\mathrm L$ must
remain available --- contraction is absorbed a second time, by the persistence
built into $\sT\!\arr$.  Weakening is absorbed by
Definition~\ref{def:closure} together with Lemma~\ref{lem:weak}: a block closes
as soon as it contains a complementary atomic pair, whatever else it contains,
so that adding formulae can only make closure easier.  Cut is absent because
every formula produced by a rule is a signed immediate subformula of the
principal formula, as Proposition~\ref{prop:subf} records.  All three
absorptions are visible on the face of the rules; none of them is a theorem
about the calculus proved after the fact.

\subsection{Worked examples}

We display tableaux with the root at the top, each node carrying its entire
block, the applied rule to the right of the line, and terminal blocks in braces
marked $\times$ or $\odot$; a bifurcation shows its two children separated by
$\mid$, and a branch too wide for the page is continued as a separate tree
rooted at the block from which it issues.

\begin{exa}[A provable formula]\label{ex:k}
The formula $p\arr(q\arr p)$ is provable.  We refute $\{\sF(p\arr(q\arr p))\}$:
\begin{prooftree}
\AxiomC{$\sF(p\arr(q\arr p))$}
\RightLabel{\ $\sF\!\arr$}
\UnaryInfC{$\sT p,\ \sF(q\arr p)$}
\RightLabel{\ $\sF\!\arr$}
\UnaryInfC{$\{\sT p,\ \sT q,\ \sF p\}\ \times$}
\end{prooftree}
The second transition purges $\sF(q\arr p)$ --- the principal formula itself,
being $\sF$-signed --- while retaining $\sT p$, and the resulting block closes
on the pair $\sT p,\sF p$.
\end{exa}

\begin{exa}[Transitivity of the implication]\label{ex:trans}
The formula $(p\arr q)\arr((q\arr r)\arr(p\arr r))$ is provable.  Three
transitions bring the root block to a block with three $\sT$-signed formulae:
\begin{prooftree}
\AxiomC{$\sF((p\arr q)\arr((q\arr r)\arr(p\arr r)))$}
\RightLabel{\ $\sF\!\arr$}
\UnaryInfC{$\sT(p\arr q),\ \sF((q\arr r)\arr(p\arr r))$}
\RightLabel{\ $\sF\!\arr$}
\UnaryInfC{$\sT(p\arr q),\ \sT(q\arr r),\ \sF(p\arr r)$}
\RightLabel{\ $\sF\!\arr$}
\UnaryInfC{$\sT(p\arr q),\ \sT(q\arr r),\ \sT p,\ \sF r$}
\end{prooftree}
Each transition purges the $\sF$-signed formula of the block, which on this
branch is always the principal one, so nothing is in fact lost.  Two
applications of $\sT\!\arr$ now close the tableau:
\begin{prooftree}
\AxiomC{$\sT(p\arr q),\ \sT(q\arr r),\ \sT p,\ \sF r$}
\RightLabel{\ $\sT\!\arr$}
\UnaryInfC{$\{\sT(p\arr q),\sT(q\arr r),\sT p,\sF r,\sF p\}\ \times
\ \ \mid\ \ \sT(q\arr r),\ \sT p,\ \sF r,\ \sT q$}
\end{prooftree}
\begin{prooftree}
\AxiomC{$\sT(q\arr r),\ \sT p,\ \sF r,\ \sT q$}
\RightLabel{\ $\sT\!\arr$}
\UnaryInfC{$\{\sT(q\arr r),\sT p,\sF r,\sT q,\sF q\}\ \times
\ \ \mid\ \ \{\sT p,\sF r,\sT q,\sT r\}\ \times$}
\end{prooftree}
The left leaf of the first bifurcation closes on $\sT p,\sF p$, the left leaf of
the second on $\sT q,\sF q$, and the right leaf of the second on $\sT r,\sF r$.
Note that the principal formula retained in each left leaf plays no further
part: retention is not needed here.  Example~\ref{ex:dnlem} exhibits a case in
which it is.
\end{exa}

\begin{exa}[Excluded middle]\label{ex:lem}
The formula $p\lor\lnot p$ is not provable.  We attempt to refute
$\{\sF(p\lor\lnot p)\}$:
\begin{prooftree}
\AxiomC{$\sF(p\lor\lnot p)$}
\RightLabel{\ $\sF\!\lor$}
\UnaryInfC{$\sF p,\ \sF\lnot p$}
\RightLabel{\ $\sF\!\lnot$}
\UnaryInfC{$\{\sT p\}\ \odot$}
\end{prooftree}
The transition purges $\sF p$, and the resulting block is completed open: no
rule applies to $\sT p$.  Reading the branch as a two-element model --- a root
$w$ at which nothing is forced, and a successor $v$ at which $p$ is forced ---
one has $w\nforces p$ and, since $v\forces p$, also $w\nforces\lnot p$; hence
$w\nforces p\lor\lnot p$.  In the classical block calculus the corresponding
tableau closes, because there the context is transported entire.
\end{exa}

\begin{exa}[Peirce's law]\label{ex:peirce}
The formula $((p\arr q)\arr p)\arr p$ is not provable.  We attempt to refute
$\{\sF(((p\arr q)\arr p)\arr p)\}$.  The first step is a transition:
\begin{prooftree}
\AxiomC{$\sF(((p\arr q)\arr p)\arr p)$}
\RightLabel{\ $\sF\!\arr$}
\UnaryInfC{$\sT((p\arr q)\arr p),\ \sF p$}
\RightLabel{\ $\sT\!\arr$}
\UnaryInfC{$\sT((p\arr q)\arr p),\ \sF p,\ \sF(p\arr q)
\ \ \mid\ \ \{\sT p,\ \sF p\}\ \times$}
\end{prooftree}
The right child closes.  On the left child a further transition is applied,
with principal formula $\sF(p\arr q)$:
\begin{prooftree}
\AxiomC{$\sT((p\arr q)\arr p),\ \sF p,\ \sF(p\arr q)$}
\RightLabel{\ $\sF\!\arr$}
\UnaryInfC{$\sT((p\arr q)\arr p),\ \sT p,\ \sF q$}
\RightLabel{\ $\sT\!\arr$}
\UnaryInfC{$\sT((p\arr q)\arr p),\ \sT p,\ \sF q,\ \sF(p\arr q)
\ \ \mid\ \ \{\sT p,\ \sF q\}\ \odot$}
\end{prooftree}
The purge has removed $\sF p$, which in the classical calculus would have
closed the branch against $\sT p$.  The right child is completed open, and the
tableau cannot be closed.  The open branch yields the countermodel
$\mathcal M=(\{w_{0},w_{1}\},\le,V)$ with $w_{0}\le w_{1}$, $V(w_{0})=\emptyset$
and $V(w_{1})=\{p\}$.  One checks directly that $w_{1}\nforces p\arr q$, since
$w_{1}\forces p$ and $w_{1}\nforces q$, and that $w_{0}\nforces p\arr q$ for the
same reason; hence $w_{0}\forces(p\arr q)\arr p$ vacuously, while
$w_{0}\nforces p$, so that $w_{0}\nforces((p\arr q)\arr p)\arr p$.
\end{exa}

\begin{exa}[The double negation of excluded middle]\label{ex:dnlem}
Although $p\lor\lnot p$ is not provable, its double negation is, and the
refutation shows the retention of $\sT\!\lnot$ doing work that nothing else
could do.  Write $E$ for $p\lor\lnot p$ and recall the derived rules of
Proposition~\ref{prop:negrules}.
\begin{prooftree}
\AxiomC{$\sF\lnot\lnot E$}
\RightLabel{\ $\sF\!\lnot$}
\UnaryInfC{$\sT\lnot E$}
\RightLabel{\ $\sT\!\lnot$}
\UnaryInfC{$\sT\lnot E,\ \sF(p\lor\lnot p)$}
\RightLabel{\ $\sF\!\lor$}
\UnaryInfC{$\sT\lnot E,\ \sF p,\ \sF\lnot p$}
\RightLabel{\ $\sF\!\lnot$}
\UnaryInfC{$\sT\lnot E,\ \sT p$}
\RightLabel{\ $\sT\!\lnot$}
\UnaryInfC{$\sT\lnot E,\ \sT p,\ \sF(p\lor\lnot p)$}
\RightLabel{\ $\sF\!\lor$}
\UnaryInfC{$\{\sT\lnot E,\ \sT p,\ \sF p,\ \sF\lnot p\}\ \times$}
\end{prooftree}
The branch does not bifurcate once.  Its fourth step is a transition: it
introduces $\sT p$ and purges the two $\sF$-signed formulae of the block, among
them the $\sF p$ that $\sT p$ would have contradicted.  Closure is thereby
postponed, and what makes it possible all the same is that $\sT\lnot E$ survives
the purge, being $\sT$-signed, and is still available to be decomposed a second
time; the second decomposition reinstates $\sF(p\lor\lnot p)$, now in a block
that carries $\sT p$ as well, and one further application of $\sF\!\lor$
produces the complementary pair.  Delete the retention from $\sT\!\lnot$ and
$\sT\lnot E$ would be consumed at the second step; the branch would then run
$\sF(p\lor\lnot p)$, $\{\sF p,\sF\lnot p\}$, $\{\sT p\}$ and stop, completed
open.
\end{exa}

Example~\ref{ex:peirce} repays a second look, because it isolates the working
of the purge.  The blocks of the classical refutation of Peirce's law and those
of the present attempt agree step for step, up to the sign discipline, until
the second transition; there the classical calculus carries $\lnot p$ forward
and closes, whereas here $\sF p$ is deleted and the branch survives.  A single
deletion separates the two logics.
\section{The sequent calculus \texorpdfstring{$\Gti$}{G3Ti}}\label{sec:g3}

A block carries, in general, several $\sF$-signed formulae, and the rule
$\sF\!\arr$ deletes all of them but the one it decomposes.  The sequent
calculus that matches this behaviour is therefore not $\LJ$ but the
multiple-succedent intuitionistic calculus of Maehara \cite{Maehara1954} and
Dragalin \cite{Dragalin1988}, in the structural-rule-free form given to it by
Troelstra and Schwichtenberg \cite{TroelstraSchwichtenberg2000} and by Negri and
von Plato \cite{NegriVonPlato2001}.  We fix it here in the notation of the
present paper.

\subsection{The calculus}

\begin{dfn}[The calculus $\Gti$]\label{def:g3}
A sequent is an expression $\Gamma\seq\Delta$ with $\Gamma,\Delta$ finite
multisets of formulae.  The calculus $\Gti$ has the initial sequents
\[
p,\Gamma\seq\Delta,p\ \ (\mathrm{id})
\qquad\qquad
\bot,\Gamma\seq\Delta\ \ (\mathrm{L}\bot)
\]
with $p$ a propositional variable, and the rules
\[
\frac{A,B,\Gamma\seq\Delta}{A\land B,\Gamma\seq\Delta}\ \land\mathrm L
\qquad
\frac{A,\Gamma\seq\Delta\quad B,\Gamma\seq\Delta}
     {A\lor B,\Gamma\seq\Delta}\ \lor\mathrm L
\qquad
\frac{A\arr B,\Gamma\seq\Delta,A\quad B,\Gamma\seq\Delta}
     {A\arr B,\Gamma\seq\Delta}\ \arr\mathrm L
\]
\[
\frac{\Gamma\seq\Delta,A\quad\Gamma\seq\Delta,B}
     {\Gamma\seq\Delta,A\land B}\ \land\mathrm R
\qquad
\frac{\Gamma\seq\Delta,A,B}{\Gamma\seq\Delta,A\lor B}\ \lor\mathrm R
\qquad
\frac{A,\Gamma\seq B}{\Gamma\seq\Delta,A\arr B}\ \arr\mathrm R
\]
We write $\Gti\vdash_{n}\Gamma\seq\Delta$ if the sequent has a derivation of
height at most $n$, the height of a derivation being the length of its longest
branch.  A rule is \emph{admissible} if the derivability of its premisses
entails that of its conclusion, \emph{height-preserving admissible} (hp-admissible)
if moreover no increase in height is required, and \emph{invertible} if the
derivability of its conclusion entails that of each of its premisses.
\end{dfn}

Two features of $\Gti$ single it out among sequent calculi for intuitionistic
logic.  The rule $\arr\!\mathrm L$ repeats its principal formula in the left
premiss; this is Dragalin's device, and it is what makes contraction
hp-admissible.  The rule $\arr\!\mathrm R$ discards the context $\Delta$ in
passing to its premiss; this is the succedent-emptying that the purge of the
block calculus transcribes, and it is the only rule of $\Gti$ that is not
invertible.  Everything peculiar to the intuitionistic case is a consequence of
one or the other.

\subsection{Weakening and initial sequents}

\begin{lem}[Height-preserving weakening]\label{lem:g3wk}
The rules
\[
\frac{\Gamma\seq\Delta}{A,\Gamma\seq\Delta}\ \wkL
\qquad\qquad
\frac{\Gamma\seq\Delta}{\Gamma\seq\Delta,A}\ \wkR
\]
are hp-admissible in $\Gti$.
\end{lem}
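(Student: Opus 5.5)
The plan is a simultaneous induction on $n$, showing that if $\Gti\vdash_{n}\Gamma\seq\Delta$ then $\Gti\vdash_{n}A,\Gamma\seq\Delta$ and $\Gti\vdash_{n}\Gamma\seq\Delta,A$, for an arbitrary formula $A$. Alternatively, one could transfer Lemma~\ref{lem:weak} through the block correspondence. I prefer the direct induction, because the correspondence theorem is stated only later and may itself rely on the present lemma.

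\emph{Base case, $n=0$.} The sequent is an initial sequent, either $\mathrm{id}$ or $\mathrm L\bot$. In both schemata the contexts $\Gamma,\Delta$ are arbitrary. Hence adding $A$ on either side yields another instance of the same schema, again of height $0$.

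\emph{Inductive step.} Let the last rule be $R$, with premisses derivable in height at most $n-1$. Suppose first that $R$ is any rule other than $\arr\mathrm R$. Its premisses carry the full contexts of the conclusion, namely $\Gamma$ on the left and $\Delta$ on the right. I apply the induction hypothesis to each premiss, adding $A$ on the relevant side. The premisses keep height at most $n-1$, and another application of $R$ gives the weakened conclusion in height at most $n$.

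For $\arr\mathrm L$ this works because both premisses contain the context, the left one together with the repeated principal formula. Left weakening is handled the same way for $\arr\mathrm R$: from $B,\Gamma\seq C$ at height $n-1$, the induction hypothesis gives $B,A,\Gamma\seq C$, and $\arr\mathrm R$ then yields $A,\Gamma\seq\Delta,B\arr C$.

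The only case that deserves attention, though it is not really an obstacle, is right weakening when the last rule is $\arr\mathrm R$. Here the induction hypothesis is not needed at all. The premiss $B,\Gamma\seq C$ ignores $\Delta$, and the rule allows an arbitrary succedent context in its conclusion. So I simply reapply $\arr\mathrm R$ to the same premiss with context $\Delta,A$, obtaining $\Gamma\seq\Delta,A,B\arr C$ at the same height. This is the sequent counterpart of the $\sF\!\arr$ case in Lemma~\ref{lem:weak}, where the extra $\sF$-formulae are purged anyway.
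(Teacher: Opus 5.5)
Your proof is correct and follows essentially the same route as the paper's: a simultaneous induction on height, initial sequents closed under arbitrary contexts, context-carrying rules handled by the induction hypothesis and re-application, and the $\arr\mathrm R$ case split into left weakening via the induction hypothesis and right weakening by re-applying the rule with the enlarged succedent context. Your decision not to transfer Lemma~\ref{lem:weak} through the block correspondence is also well founded, since the proof of Theorem~\ref{thm:corr} itself invokes the present lemma.
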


\begin{proof}
By induction on the height $n$ of the given derivation, for both rules
simultaneously.  If $n=0$ the sequent is initial, and the initial sequents
admit arbitrary contexts on both sides, so the weakened sequent is initial as
well.  Let $n>0$ and let $R$ be the last rule.  If $R$ is one of
$\land\mathrm L$, $\lor\mathrm L$, $\arr\mathrm L$, $\land\mathrm R$,
$\lor\mathrm R$, then $R$ carries its context unchanged from conclusion to
premisses, so the induction hypothesis applied to the premisses and a
re-application of $R$ give the conclusion at the same height.  If $R$ is
$\arr\mathrm R$, with conclusion $\Gamma\seq\Delta,C\arr D$ and premiss
$C,\Gamma\seq D$, then for $\wkR$ nothing is to be done, since $\arr\mathrm R$
allows an arbitrary succedent context: one re-applies it with $\Delta,A$ in
place of $\Delta$.  For $\wkL$ one applies the induction hypothesis to the
premiss, obtaining $A,C,\Gamma\seq D$, and re-applies $\arr\mathrm R$.
\end{proof}

\begin{lem}[Generalized initial sequents]\label{lem:g3id}
For every formula $A$ and all $\Gamma,\Delta$, the sequent
$A,\Gamma\seq\Delta,A$ is derivable in $\Gti$.
\end{lem}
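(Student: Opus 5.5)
The proof goes by induction on the number of connectives of $A$, with $\Gamma$ and $\Delta$ arbitrary throughout. We mirror the block-level argument of Lemma~\ref{lem:genclosure}, reading the tableau steps upside down as rules of $\Gti$. Where the induction hypothesis yields a sequent with a smaller context than the one we need, we pad it by Lemma~\ref{lem:g3wk}.

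If $A$ is a propositional variable, $A,\Gamma\seq\Delta,A$ is an instance of $(\mathrm{id})$. If $A=\bot$, it is an instance of $(\mathrm{L}\bot)$.

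For $A=B\land C$ we apply $\land\mathrm R$ and then $\land\mathrm L$ in each premiss. This reduces the goal to $B,C,\Gamma\seq\Delta,B$ and $B,C,\Gamma\seq\Delta,C$, which are instances of the induction hypothesis with enlarged antecedent contexts. Dually, for $A=B\lor C$ we apply $\lor\mathrm R$ and then $\lor\mathrm L$. The premisses are $B,\Gamma\seq\Delta,B,C$ and $C,\Gamma\seq\Delta,B,C$, again instances of the induction hypothesis. Since the hypothesis is stated for all contexts, no weakening is needed in these two cases.

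The case $A=B\arr C$ needs care because $\arr\mathrm R$ discards the succedent context. The order of rules must therefore be the one forced in Lemma~\ref{lem:genclosure}: first $\arr\mathrm R$, then $\arr\mathrm L$. Concretely, to obtain $B\arr C,\Gamma\seq\Delta,B\arr C$ we apply $\arr\mathrm R$, which leaves the premiss $B,B\arr C,\Gamma\seq C$; the context $\Delta$ is legitimately dropped, since $\arr\mathrm R$ admits an arbitrary $\Delta$ in its conclusion. We then apply $\arr\mathrm L$ with principal formula $B\arr C$, which gives two premisses:
\[
B,B\arr C,\Gamma\seq C,B \qquad\text{and}\qquad C,B,\Gamma\seq C .
\]
The first is the induction hypothesis for $B$, with antecedent context $B\arr C,\Gamma$ and succedent context $C$. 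The second is the induction hypothesis for $C$, with antecedent context $B,\Gamma$ and empty succedent context.

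The only real obstacle is this implication case. Applying $\arr\mathrm L$ first would leave $\arr\mathrm R$ to erase the copy of $A$ that should have closed the left premiss, so the order of the two rules matters. Once the order is fixed as above, everything reduces to instances of the induction hypothesis on strictly smaller formulae. This ordering is the sequent-calculus counterpart of the remark following Lemma~\ref{lem:genclosure} that the transition must precede the rule with persistence.
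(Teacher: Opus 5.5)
Your proof is correct and follows the paper's argument essentially step for step. It uses the same induction on connectives and the same base cases, and in the implication case the same order ($\arr\mathrm R$ below $\arr\mathrm L$) with the same two leaves $B,B\arr C,\Gamma\seq C,B$ and $C,B,\Gamma\seq C$. Your only deviation is putting $\lor\mathrm R$ below $\lor\mathrm L$ rather than the reverse, which is immaterial since both orders yield the same leaves; and, as you note yourself, Lemma~\ref{lem:g3wk} is never actually needed.
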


\begin{proof}
By induction on the number of connectives of $A$.  If $A$ is a variable the
sequent is $(\mathrm{id})$, and if $A=\bot$ it is $(\mathrm{L}\bot)$.

Let $A=B\land C$.  By $\land\mathrm R$ it suffices to derive
$B\land C,\Gamma\seq\Delta,B$ and $B\land C,\Gamma\seq\Delta,C$; each follows
by $\land\mathrm L$ from $B,C,\Gamma\seq\Delta,B$, respectively
$B,C,\Gamma\seq\Delta,C$, which are available by the induction hypothesis, the
formulae $B$ and $C$ having fewer connectives than $A$.

Let $A=B\lor C$.  By $\lor\mathrm L$ it suffices to derive
$B,\Gamma\seq\Delta,B\lor C$ and $C,\Gamma\seq\Delta,B\lor C$; each follows by
$\lor\mathrm R$ from $B,\Gamma\seq\Delta,B,C$, respectively
$C,\Gamma\seq\Delta,B,C$, available by the induction hypothesis.

Let $A=B\arr C$.  By $\arr\mathrm R$ it suffices to derive
$B,B\arr C,\Gamma\seq C$.  Apply $\arr\mathrm L$ to the antecedent occurrence
of $B\arr C$: the premisses are $B\arr C,B,\Gamma\seq C,B$ and
$C,B,\Gamma\seq C$, both instances of the induction hypothesis for $B$ and for
$C$.
\end{proof}

\subsection{Invertibility, contraction, thinning}

\begin{lem}[Invertibility]\label{lem:g3inv}
The following are hp-admissible in $\Gti$.
\begin{enumerate}
\item[\textup{(i)}] From $A\land B,\Gamma\seq\Delta$ infer $A,B,\Gamma\seq\Delta$.
\item[\textup{(ii)}] From $A\lor B,\Gamma\seq\Delta$ infer $A,\Gamma\seq\Delta$,
and likewise $B,\Gamma\seq\Delta$.
\item[\textup{(iii)}] From $A\arr B,\Gamma\seq\Delta$ infer $B,\Gamma\seq\Delta$.
\item[\textup{(iv)}] From $\Gamma\seq\Delta,A\land B$ infer $\Gamma\seq\Delta,A$,
and likewise $\Gamma\seq\Delta,B$.
\item[\textup{(v)}] From $\Gamma\seq\Delta,A\lor B$ infer $\Gamma\seq\Delta,A,B$.
\end{enumerate}
The left premiss of $\arr\mathrm L$ is recovered from its conclusion by
Lemma~\ref{lem:g3wk}.  The rule $\arr\mathrm R$ is not invertible.
\end{lem}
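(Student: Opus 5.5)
We prove items (i)--(v) simultaneously by induction on the height $n$ of the given derivation, in each case showing that the inverted sequent has a derivation of height at most $n$. For $n=0$ the given sequent is initial. Its principal atom, $p$ or $\bot$, is not the compound $A\land B$, $A\lor B$ or $A\arr B$ being inverted, so the atom still occurs in the inverted sequent, which is therefore initial as well.

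For $n>0$ we distinguish whether the formula being inverted is principal in the last rule $R$. If it is principal, the inverted sequent is a premiss of $R$, or in case (iii) the right premiss $B,\Gamma\seq\Delta$ of $\arr\mathrm L$. It thus has a derivation of height at most $n-1$, and we are done.

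If it is not principal, it sits in the context of $R$. When $R$ is one of $\land\mathrm L$, $\lor\mathrm L$, $\arr\mathrm L$, $\land\mathrm R$ or $\lor\mathrm R$, the context is copied unchanged to every premiss. We apply the induction hypothesis to each premiss and then re-apply $R$, which gives height at most $n$. For $\arr\mathrm L$ this includes the repeated occurrence $A\arr B$ in the left premiss, which is untouched because it is not the formula being inverted.

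The one delicate case, and the only place where the multiple-succedent discipline matters, is $R=\arr\mathrm R$ with conclusion $\Gamma'\seq\Delta',C\arr D$ and premiss $C,\Gamma'\seq D$.
\begin{itemize}
\item For the right inversions (iv) and (v), the inverted formula lies in $\Delta'$, which $\arr\mathrm R$ discards. We therefore re-apply $\arr\mathrm R$ to the same premiss, with the succedent context replaced by $\Delta'\setminus\{A\land B\},A$, respectively $\Delta'\setminus\{A\lor B\},A,B$. The height stays the same and no induction hypothesis is needed.
\item For the left inversions (i)--(iii), the inverted formula lies in $\Gamma'$ and survives into the premiss. We apply the induction hypothesis to $C,\Gamma'\seq D$ and re-apply $\arr\mathrm R$.
\end{itemize}
We expect this case to be the main, though mild, obstacle: one must check that the succedent $D$ of the premiss is never the inverted formula in the right cases. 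This holds because the inverted formula belongs to $\Delta'$, which does not reach the premiss.

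For the closing remarks of the statement, first apply Lemma~\ref{lem:g3wk} ($\wkR$ with $A$) to $A\arr B,\Gamma\seq\Delta$. This gives the left premiss $A\arr B,\Gamma\seq\Delta,A$ of $\arr\mathrm L$ at the same height.

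For the non-invertibility of $\arr\mathrm R$, take the conclusion $\seq p,\,p\arr\bot$ with $p$ a propositional variable. It is derivable by $\arr\mathrm R$ from $p\seq\bot,p$, which is initial. Its $\arr\mathrm R$-premiss $p\seq\bot$, however, is not derivable. It is not initial, and no rule has a compound principal formula available in it.
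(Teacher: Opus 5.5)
Your proof of (i)--(v) is the paper's proof. You argue by induction on height. In the base case the witnessing atom cannot be the compound being inverted, and in the step you split on principal versus passive. For $\arr\mathrm R$ you re-choose the succedent context in the right cases and apply the induction hypothesis in the left ones. Recovering the left premiss of $\arr\mathrm L$ by $\wkR$ is also correct. Your worry about $D$ is unnecessary: the premiss of $\arr\mathrm R$ is reused verbatim, so it does not matter whether $D$ happens to be the inverted formula.

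Your argument that $\arr\mathrm R$ is not invertible fails, because the conclusion you pick is not derivable. In $\Gti$ the premiss of $\arr\mathrm R$ has exactly one formula in its succedent, $A,\Gamma\seq B$. So the rule cannot be applied to $p\seq\bot,p$: doing so would carry the context $p$ into the premiss, which is exactly the classical transport the rule forbids. In fact $\seq p,p\arr\bot$ has no derivation at all. Its antecedent is empty and it is not initial, so the only possible last rule is $\arr\mathrm R$ with premiss $p\seq\bot$, and you yourself show that premiss is underivable. Semantically it is the multiple-succedent form of excluded middle. It fails at $w$ in the model $w\le v$ with $V(w)=\emptyset$ and $V(v)=\{p\}$ (cf.\ Example~\ref{ex:lem}).

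A witness needs a conclusion that is derivable for a reason independent of the implication, for instance an initial sequent. The paper's $p\seq p,q\arr r$ is an instance of $(\mathrm{id})$. Its $\arr\mathrm R$-premiss $q,p\seq r$ is underivable: it is not initial and contains no compound formula, or use the one-world model with $V(w)=\{p,q\}$. Equally, $p\seq p,p\arr\bot$ works, with underivable premiss $p,p\seq\bot$.
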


\begin{proof}
Each item is proved by induction on the height $n$ of the derivation of the
given sequent, and the five inductions have the same shape; we give (iii) and
(v) in full and indicate the remaining ones.

\emph{(iii).}  If $n=0$ the sequent $A\arr B,\Gamma\seq\Delta$ is initial.  If
it is $(\mathrm{id})$, the atomic pair witnessing it cannot involve the
compound formula $A\arr B$, so it lies in $\Gamma$ and $\Delta$ and
$B,\Gamma\seq\Delta$ is again $(\mathrm{id})$; if it is $(\mathrm{L}\bot)$ then
$\bot\in\Gamma$, since $A\arr B\ne\bot$, and $B,\Gamma\seq\Delta$ is again
$(\mathrm{L}\bot)$.  Let $n>0$.  If the last rule is $\arr\mathrm L$ with the
displayed occurrence of $A\arr B$ as principal formula, its right premiss is
$B,\Gamma\seq\Delta$, derivable at height $n-1$.  If the last rule $R$ is any
other rule with a principal formula distinct from the displayed occurrence,
then $A\arr B$ occurs in the context of each premiss, unless $R$ is
$\arr\mathrm R$; in the former case the induction hypothesis applied to the
premisses and a re-application of $R$ give the conclusion at height at most
$n$.  If $R$ is $\arr\mathrm R$, with conclusion
$A\arr B,\Gamma\seq\Delta_{0},C\arr D$ and premiss $C,A\arr B,\Gamma\seq D$,
the induction hypothesis gives $C,B,\Gamma\seq D$ at height at most $n-1$, and
$\arr\mathrm R$ yields $B,\Gamma\seq\Delta_{0},C\arr D$.

\emph{(v).}  If $n=0$, the sequent $\Gamma\seq\Delta,A\lor B$ is initial and,
$A\lor B$ being compound, so is $\Gamma\seq\Delta,A,B$.  Let $n>0$.  If the
last rule is $\lor\mathrm R$ with $A\lor B$ principal, its premiss is
$\Gamma\seq\Delta,A,B$.  If the last rule is $\arr\mathrm R$, with premiss
$C,\Gamma\seq D$, then $\arr\mathrm R$ applied to the same premiss with the
succedent context $\Delta,A,B$ yields $\Gamma\seq\Delta,A,B,C\arr D$ at the
same height.  In every other case the displayed occurrence of $A\lor B$ is
passive, and the induction hypothesis applied to the premisses followed by a
re-application of the rule gives the claim.

\emph{(i), (ii), (iv).}  The argument is the same in all three cases.  In the
base case one uses that the principal formula is compound, and hence can be
neither the atom witnessing $(\mathrm{id})$ nor $\bot$.  In the inductive step
one distinguishes whether the displayed formula is principal in the last rule,
in which case the required sequent is a premiss, or passive, in which case the
induction hypothesis applies to the premisses.  The case of $\arr\mathrm R$ is
handled as above: that rule imposes no constraint on the succedent context of
its conclusion, and none at all on a passive antecedent formula.
\end{proof}

\begin{rem}\label{rem:noinvR}
That $\arr\mathrm R$ is not invertible is seen at once: $p\seq p,q\arr r$ is an
instance of $(\mathrm{id})$, while $q,p\seq r$ is not derivable, as the
one-element model with $V(w)=\{p,q\}$ shows.  By Theorem~\ref{thm:corr} the
same holds of the block rule $\sF\!\arr$, and of it alone among the rules of
$\bti$.
\end{rem}

\begin{lem}[Height-preserving contraction]\label{lem:g3ctr}
The rules
\[
\frac{A,A,\Gamma\seq\Delta}{A,\Gamma\seq\Delta}\ \ctrL
\qquad\qquad
\frac{\Gamma\seq\Delta,A,A}{\Gamma\seq\Delta,A}\ \ctrR
\]
are hp-admissible in $\Gti$.
\end{lem}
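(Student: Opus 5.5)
We prove the admissibility of $\ctrL$ and $\ctrR$ simultaneously, by induction on the height $n$ of the derivation of the premiss, using Lemma~\ref{lem:g3inv} to handle the cases in which one copy of the contracted formula is principal. The simultaneity is needed because the case of $\arr\mathrm L$ under $\ctrL$ produces a duplicated formula in the succedent.

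\emph{Base case and passive case.} If $n=0$ the premiss is initial. The witnessing atom or $\bot$ survives the deletion of one copy of $A$, since at least one copy remains, so the conclusion is again initial. For $n>0$, first suppose that neither copy of $A$ is principal in the last rule $R$. If $R$ is not $\arr\mathrm R$, both copies occur in the context of every premiss. We then apply the induction hypothesis to each premiss and re-apply $R$. If $R$ is $\arr\mathrm R$, the antecedent is carried into the premiss, so for $\ctrL$ we contract in the premiss by induction and re-apply the rule. For $\ctrR$ the copies lie in the discarded $\Delta$, and we simply re-apply $\arr\mathrm R$ with the smaller succedent context, at the same height.

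\emph{Principal case, $\ctrL$.} Suppose one copy of $A$ is principal.
\begin{itemize}
\item For $A=B\land C$, the premiss is $B,C,B\land C,\Gamma\seq\Delta$ at height $n-1$. Inversion~(i) gives $B,C,B,C,\Gamma\seq\Delta$ at height $n-1$. Two applications of the induction hypothesis give $B,C,\Gamma\seq\Delta$, and $\land\mathrm L$ restores the conclusion.
\item For $A=B\lor C$ the argument is the same, using inversion~(ii) on each premiss.
\item For $A=B\arr C$, the left premiss is $B\arr C,B\arr C,\Gamma\seq\Delta,B$. Here the induction hypothesis ($\ctrL$) applies directly, with no inversion needed. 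The right premiss is $C,B\arr C,\Gamma\seq\Delta$. Inversion~(iii) gives $C,C,\Gamma\seq\Delta$ at height $n-1$, and the induction hypothesis gives $C,\Gamma\seq\Delta$. Then $\arr\mathrm L$ concludes.
\end{itemize}
This is precisely where Dragalin's repetition of the principal formula pays off.

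\emph{Principal case, $\ctrR$.}
\begin{itemize}
\item For $A=B\land C$ and $A=B\lor C$ we use inversions~(iv) and~(v) together with the induction hypothesis for $\ctrR$, exactly dually to the cases above.
\item For $A=B\arr C$, the last rule is $\arr\mathrm R$, with premiss $B,\Gamma\seq C$. This premiss is independent of the succedent context, so we re-apply $\arr\mathrm R$ with context $\Delta$ in place of $\Delta,B\arr C$, obtaining $\Gamma\seq\Delta,B\arr C$ at the same height. No inversion of $\arr\mathrm R$ is needed, which is fortunate since none is available (Remark~\ref{rem:noinvR}).
\end{itemize}

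The main obstacle is the height bookkeeping in the principal cases. We must check that the inversions of Lemma~\ref{lem:g3inv} are height-preserving, so that the induction hypothesis applies at height $n-1$. The delicate case is $\arr\mathrm L$ under $\ctrL$, where the left and right premisses are treated differently: contraction directly on the left, inversion then contraction on the right.
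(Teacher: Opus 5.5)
Your proof is correct and follows the paper's own argument essentially step for step. It has the same induction on height and the same handling of passive occurrences, including re-applying $\arr\mathrm R$ with a smaller succedent context; it uses inversions (i)--(v) in the principal cases, contracts the left premiss of $\arr\mathrm L$ directly thanks to the repeated principal formula, and needs no inversion for $\arr\mathrm R$. One side remark is off, though: the $\arr\mathrm L$ case under $\ctrL$ duplicates $B\arr C$ in the antecedent, not a succedent formula, so in $\Gti$ neither contraction ever calls on the other and the simultaneous induction is harmless but not forced (it would be forced in a calculus whose $\arr\mathrm L$ does not repeat its principal formula, where inverting the left premiss produces $\Gamma\seq\Delta,B,B$).
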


\begin{proof}
Simultaneous induction on the height $n$ of the given derivation.  For $n=0$
the sequent is initial and so is its contractum, since deleting one of two
identical formulae cannot destroy an atomic pair nor an occurrence of $\bot$ in
the antecedent.  Let $n>0$ with last rule $R$.

If neither displayed occurrence is principal in $R$, the induction hypothesis
applies to the premisses of $R$ --- in each of which both occurrences are still
present, the rules carrying their contexts unchanged --- and $R$ is re-applied.
The rule $\arr\mathrm R$ deserves a word in the case of $\ctrR$: if its
conclusion is $\Gamma\seq\Delta,A,A$ with $A$ passive, then $A\ne C\arr D$ for
the principal $C\arr D$, and one re-applies $\arr\mathrm R$ to the same premiss
with succedent context $\Delta,A$.

Suppose one displayed occurrence is principal in $R$.  We treat the four cases
in which the induction hypothesis alone does not suffice.

\emph{$A=C\land D$ contracted on the left, $R=\land\mathrm L$.}  The premiss is
$C,D,C\land D,\Gamma\seq\Delta$, of height $n-1$.  By
Lemma~\ref{lem:g3inv}(i), applied to the remaining occurrence of $C\land D$,
we obtain $C,D,C,D,\Gamma\seq\Delta$ at height at most $n-1$; two applications
of the induction hypothesis give $C,D,\Gamma\seq\Delta$, and $\land\mathrm L$
concludes $C\land D,\Gamma\seq\Delta$ at height at most $n$.

\emph{$A=C\lor D$ contracted on the left, $R=\lor\mathrm L$.}  The premisses are
$C,C\lor D,\Gamma\seq\Delta$ and $D,C\lor D,\Gamma\seq\Delta$.  By
Lemma~\ref{lem:g3inv}(ii) the first yields $C,C,\Gamma\seq\Delta$ and the
second $D,D,\Gamma\seq\Delta$, both at height at most $n-1$; the induction
hypothesis and $\lor\mathrm L$ conclude.

\emph{$A=C\arr D$ contracted on the left, $R=\arr\mathrm L$.}  The premisses are
$C\arr D,C\arr D,\Gamma\seq\Delta,C$ and $D,C\arr D,\Gamma\seq\Delta$, both at
height at most $n-1$.  The induction hypothesis applied to the first gives
$C\arr D,\Gamma\seq\Delta,C$.  To the second we apply
Lemma~\ref{lem:g3inv}(iii), obtaining $D,D,\Gamma\seq\Delta$, and then the
induction hypothesis, obtaining $D,\Gamma\seq\Delta$.  One application of
$\arr\mathrm L$ concludes $C\arr D,\Gamma\seq\Delta$ at height at most $n$.

\emph{Contraction on the right.}  If $A=C\land D$ and $R=\land\mathrm R$, the
premisses are $\Gamma\seq\Delta,C\land D,C$ and $\Gamma\seq\Delta,C\land D,D$;
Lemma~\ref{lem:g3inv}(iv) applied to the remaining occurrence turns the first
into $\Gamma\seq\Delta,C,C$ and the second into $\Gamma\seq\Delta,D,D$, and the
induction hypothesis followed by $\land\mathrm R$ concludes.  If $A=C\lor D$
and $R=\lor\mathrm R$, the premiss is $\Gamma\seq\Delta,C\lor D,C,D$;
Lemma~\ref{lem:g3inv}(v) gives $\Gamma\seq\Delta,C,D,C,D$, two applications of
the induction hypothesis give $\Gamma\seq\Delta,C,D$, and $\lor\mathrm R$
concludes.  If $A=C\arr D$ and $R=\arr\mathrm R$, the conclusion is
$\Gamma\seq\Delta,C\arr D,C\arr D$ and the premiss is $C,\Gamma\seq D$; a
single application of $\arr\mathrm R$ to that premiss, with succedent context
$\Delta$, gives $\Gamma\seq\Delta,C\arr D$ at height at most $n$.
\end{proof}

\begin{lem}[Thinning of $\bot$ on the right]\label{lem:botthin}
If $\Gti\vdash_{n}\Gamma\seq\Delta,\bot$ then $\Gti\vdash_{n}\Gamma\seq\Delta$.
\end{lem}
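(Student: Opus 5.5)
We argue by induction on $n$, following the pattern of Lemma~\ref{lem:g3inv}(v).  Fix a derivation of $\Gamma\seq\Delta,\bot$ of height at most $n$ and let $\bot_{0}$ denote the displayed occurrence of $\bot$ in the succedent.  The key observation is that no rule of $\Gti$ has $\bot$ as a principal formula on the right, and neither initial sequent uses a succedent $\bot$ to witness its initiality.  So $\bot_{0}$ can be removed at every step of the derivation.

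\emph{Base case.}  If $n=0$, the sequent is $(\mathrm{id})$ or $(\mathrm{L}\bot)$.
\begin{itemize}
\item In the case of $(\mathrm{id})$, the witnessing pair consists of a propositional variable $p$ occurring on both sides.  Since $\bot$ is not a propositional variable, the succedent occurrence of $p$ lies in $\Delta$, and $\Gamma\seq\Delta$ is again $(\mathrm{id})$.
\item In the case of $(\mathrm{L}\bot)$, the antecedent $\Gamma$ contains $\bot$, and $\Gamma\seq\Delta$ is again $(\mathrm{L}\bot)$.
\end{itemize}

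\emph{Inductive step.}  Let $n>0$ and let $R$ be the last rule.  The occurrence $\bot_{0}$ cannot be principal in $R$: the right rules $\land\mathrm R$, $\lor\mathrm R$ and $\arr\mathrm R$ introduce compound formulae, and the left rules have their principal formula in the antecedent.  So $\bot_{0}$ is passive, and there are two cases.
\begin{itemize}
\item If $R$ is one of $\land\mathrm L$, $\lor\mathrm L$, $\arr\mathrm L$, $\land\mathrm R$, $\lor\mathrm R$, then $R$ carries the succedent context unchanged into each premiss.  Each premiss therefore has the form $\Gamma'\seq\Delta',\bot$ and is derivable at height at most $n-1$.  The induction hypothesis gives $\Gamma'\seq\Delta'$ at the same height, and re-applying $R$ yields $\Gamma\seq\Delta$ at height at most $n$.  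For $\arr\mathrm L$ this includes the left premiss $A\arr B,\Gamma\seq\Delta,\bot,A$, which is of the required shape.
\item If $R$ is $\arr\mathrm R$, the conclusion is $\Gamma\seq\Delta_{0},C\arr D,\bot$ with premiss $C,\Gamma\seq D$, where $\Delta=\Delta_{0},C\arr D$.  Since the rule discards the succedent context anyway, we re-apply $\arr\mathrm R$ to the same premiss with context $\Delta_{0}$.  This gives $\Gamma\seq\Delta_{0},C\arr D$ at the same height, and no induction hypothesis is needed.
\end{itemize}

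The only point needing care is the subcase of $\arr\mathrm R$ in which the principal formula $C\arr D$ coincides with the displayed $\bot$.  This is impossible, since $\bot$ is not an implication.  The subcase in which $D=\bot$ is harmless: there the premiss is $C,\Gamma\seq\bot$, which does not mention $\bot_{0}$ and is not touched at all.  I expect no genuine obstacle; the lemma is a thinning analogue of Lemma~\ref{lem:g3inv}(v) with an empty list of components.
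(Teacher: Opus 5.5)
Your proof is correct and is essentially the paper's own argument: induction on the height, with the base cases $(\mathrm{id})$ and $(\mathrm{L}\bot)$ handled by noting that $\bot$ cannot be the witnessing atom on the right. In the inductive step the displayed $\bot$ is always passive, so you apply the induction hypothesis to the premisses, and for $\arr\mathrm R$ you simply re-apply the rule with the smaller succedent context.
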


\begin{proof}
By induction on $n$.  If the sequent is $(\mathrm{id})$, the witnessing atom is
a propositional variable and hence lies in $\Delta$, so $\Gamma\seq\Delta$ is
again $(\mathrm{id})$; if it is $(\mathrm{L}\bot)$, then $\bot\in\Gamma$ and
$\Gamma\seq\Delta$ is again $(\mathrm{L}\bot)$.  In the inductive step the
displayed $\bot$ is never principal, no rule having $\bot$ as principal formula
on the right; one applies the induction hypothesis to the premisses and
re-applies the rule, the case of $\arr\mathrm R$ being handled by choosing
$\Delta$ as succedent context.
\end{proof}

\subsection{The block calculus is \texorpdfstring{$\Gti$}{G3Ti} read upside down}

\begin{con}\label{con:blockseq}
For a block $\Pi$ we write $\Gamma_{\Pi}=\{A:\sT A\in\Pi\}$ and
$\Delta_{\Pi}=\{A:\sF A\in\Pi\}$, and we read these finite sets as multisets in
which every formula occurs once.
\end{con}

\begin{thm}[Correspondence]\label{thm:corr}
For every block $\Pi$: $\Pi$ is refutable in $\bti$ if and only if
$\Gti\vdash\Gamma_{\Pi}\seq\Delta_{\Pi}$.  Equivalently, for all finite
multisets $\Gamma,\Delta$ the sequent $\Gamma\seq\Delta$ is derivable in $\Gti$
if and only if the block $\Pi(\Gamma\seq\Delta)=\sT[\Gamma]\cup\sF[\Delta]$ is
refutable in $\bti$.
\end{thm}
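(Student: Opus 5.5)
I will prove the two directions separately, each by induction on height, and then derive the second formulation from the first. The key observation is that, under Convention~\ref{con:blockseq}, each rule of $\bti$ applied to $\Pi$ corresponds to a rule of $\Gti$ read from conclusion to premisses. The only mismatch is that blocks are sets and sequents are multisets. This mismatch is absorbed by hp-contraction (Lemma~\ref{lem:g3ctr}) and hp-weakening (Lemma~\ref{lem:g3wk}).

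\emph{Left to right.} I induct on the height $n$ of a closed tableau for $\Pi$. If $n=0$, then $\Pi$ contains $\sT p,\sF p$ or $\sT\bot$, so $\Gamma_\Pi\seq\Delta_\Pi$ is an instance of $(\mathrm{id})$ or $(\mathrm L\bot)$. Otherwise I split on the root rule. For $\sT\!\land$, write $\Pi=\Pi_0\cup\{\sT(A\land B)\}$ with $\sT(A\land B)\notin\Pi_0$. The child is $\Pi_0\cup\{\sT A,\sT B\}$, and by the induction hypothesis its sequent is derivable. That sequent is $\Gamma_{\Pi_0}\cup\{A,B\}\seq\Delta_{\Pi_0}$ as sets. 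To read it as the multiset $A,B,\Gamma_{\Pi_0}\seq\Delta_{\Pi_0}$ I weaken by any duplicates that the set union has merged. Then $\land\mathrm L$ yields the sequent of $\Pi$. The rules $\sF\!\lor$, $\sT\!\lor$ and $\sF\!\land$ are handled in the same way. For $\sT\!\arr$ the left child still contains $\sT(A\arr B)$, which matches exactly the repeated principal formula in the left premiss of $\arr\mathrm L$. For $\sF\!\arr$ the child is $\Ptt\cup\{\sT A,\sF B\}$, whose sequent is $A,\Gamma_\Pi\seq B$ up to duplicates. Then $\arr\mathrm R$ with succedent context $\Delta_{\Pi}\setminus\{A\arr B\}$ gives the sequent of $\Pi$.

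\emph{Right to left.} This is the direction I expect to be the main obstacle. A $\Gti$ derivation works on multisets, so a premiss may carry duplicate formulae, or formulae already present in the context, that have no counterpart in a set-based block. I plan to prove the stronger statement: if $\Gti\vdash_n\Gamma\seq\Delta$ for arbitrary multisets, then the block $\sT[\Gamma]\cup\sF[\Delta]$ is refutable, by induction on $n$. This formulation needs no contraction, because passing from a multiset to its underlying set is built into the map $\Gamma,\Delta\mapsto\sT[\Gamma]\cup\sF[\Delta]$. Initial sequents give closed blocks. For the last rule, say $\land\mathrm L$ with premiss $A,B,\Gamma\seq\Delta$, the induction hypothesis refutes $\sT[\Gamma]\cup\{\sT A,\sT B\}\cup\sF[\Delta]$. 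Applying $\sT\!\land$ to $\Pi=\sT[\Gamma]\cup\{\sT(A\land B)\}\cup\sF[\Delta]$ yields the child $(\Pi\setminus\{\sT(A\land B)\})\cup\{\sT A,\sT B\}$. This child is a subset of the refuted block: it differs only in possibly lacking $\sT(A\land B)$ when $A\land B$ also occurred in $\Gamma$. So I apply the rule to $\Pi$ and close the child by weakening it to the refuted block with Lemma~\ref{lem:weak}. The direction of the inclusion needs care here, since weakening only adds formulae. I will instead keep the principal formula available by observing that the refuted premiss block contains $\sT(A\land B)$ whenever it occurred twice, and that the block child equals the premiss block minus at most that one formula. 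If the formula occurred only once, the two coincide. If it occurred more than once, the child still contains it, because set deletion removes only the principal element; in that case I apply the rule to a copy, so the child is the premiss block itself. The case of $\arr\mathrm R$ is straightforward: the block rule $\sF\!\arr$ purges all of $\sF[\Delta]$, and its child $\sT[\Gamma]\cup\{\sT A,\sF B\}$ is exactly the block of the premiss $A,\Gamma\seq B$. The case of $\arr\mathrm L$ matches $\sT\!\arr$ directly.

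Finally, the equivalence for arbitrary multisets follows from the two directions. Right to left is the strengthened claim just proved. Left to right yields derivability of the sequent of the underlying sets, from which $\Gamma\seq\Delta$ itself follows by hp-weakening (Lemma~\ref{lem:g3wk}), since $\Gamma$ and $\Delta$ are obtained from those sets by adding copies.
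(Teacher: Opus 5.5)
Your direction from closed tableaux to derivations is correct and is the paper's own argument: induction on tableau height, hp-weakening (Lemma~\ref{lem:g3wk}) to restore the copies that the set reading merges, and $\arr\mathrm R$ to reinstate the purged succedent. The gap is in the direction from derivations to tableaux, in the case where the principal formula of the last rule has a further occurrence on its side of the conclusion. Take $\land\mathrm L$ with conclusion $A\land B,\Gamma\seq\Delta$ and $A\land B\in\Gamma$. The block $\Pi=\sT[\Gamma]\cup\{\sT(A\land B)\}\cup\sF[\Delta]$ is a set, so it contains $\sT(A\land B)$ exactly once. The only child $\sT\!\land$ produces is therefore $(\Pi\setminus\{\sT(A\land B)\})\cup\{\sT A,\sT B\}$, which no longer contains $\sT(A\land B)$. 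But the premiss $A,B,\Gamma\seq\Delta$ still has $A\land B$ in $\Gamma$, so the block your induction hypothesis refutes is that child \emph{plus} $\sT(A\land B)$. Your remedy --- that set deletion leaves the formula in place and the rule can be applied to a \emph{copy} --- is false, because a set has no second copy. Lemma~\ref{lem:weak} only passes from a refutable block to a larger one, never to a smaller one. The same failure affects $\lor\mathrm L$, the right premiss of $\arr\mathrm L$, and $\land\mathrm R$, $\lor\mathrm R$ when the principal formula is repeated in the succedent. Only $\arr\mathrm R$ is immune, since there the premiss block is exactly the child. So your claim that the strengthened formulation needs no contraction is wrong: discarding $\sT(A\land B)$ in the presence of $\sT A,\sT B$ is in effect a contraction, and it has to come from somewhere.

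The missing step is the hp-contraction of Lemma~\ref{lem:g3ctr}, which you announce at the outset but never use. The paper applies it before inspecting the last rule. If $\Gamma$ or $\Delta$ contains a repetition, contract it away; the new derivation has height at most $n$, and its sequent has the same associated block. You may therefore assume the conclusion repetition-free, either by inducting on $n$ lexicographically with the number of formula occurrences, as the paper does, or by stating the induction hypothesis for all multisets and contracting at the start of each step. Once the conclusion is repetition-free, the principal formula occurs once on its side, and each block child coincides with the block of the corresponding premiss. The induction hypothesis then applies to the premisses, which may themselves contain repetitions again. With this preliminary contraction your case analysis goes through unchanged.
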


\begin{proof}
\emph{From sequents to blocks.}  We show that if $\Gti\vdash_{n}\Gamma\seq
\Delta$ then $\Pi=\sT[\Gamma]\cup\sF[\Delta]$ is refutable, arguing by
induction on the pair $(n,m)$ ordered lexicographically, where $m$ is the total
number of formula occurrences in $\Gamma$ and $\Delta$.  The auxiliary
parameter $m$ is needed for the following reason: a block is a set, so the
block rule that consumes its principal formula removes its only copy, whereas
the corresponding sequent rule removes one occurrence among possibly several.
The discrepancy is eliminated at the outset of each step.

Suppose first that $\Gamma$ or $\Delta$ contains a repeated formula.  By
Lemma~\ref{lem:g3ctr} the sequent obtained by deleting one of the two
occurrences is derivable with height at most $n$, and it has $m-1$ occurrences;
since the associated block is unchanged, the induction hypothesis applies and
gives the claim.  We may therefore assume that $\Gamma$ and $\Delta$ are
repetition-free, so that the principal formula of the last rule occurs exactly
once on its side.

If $\Gamma\seq\Delta$ is $(\mathrm{id})$ then $\sT p,\sF p\in\Pi$ and $\Pi$ is
closed; if it is $(\mathrm{L}\bot)$ then $\sT\bot\in\Pi$ and $\Pi$ is closed.
For the inductive step we treat each rule, writing $\Gamma=\Gamma_{0},A\ast B$
or $\Delta=\Delta_{0},A\ast B$ as the case requires, with $A\ast B$ not
occurring in $\Gamma_{0}$, respectively $\Delta_{0}$.  In each case the block
of the premiss is reached from $\Pi$ by exactly one application of the
corresponding block rule, and the induction hypothesis applies to the premiss,
whose derivation has height at most $n-1$.

If the last rule is $\land\mathrm L$, then $\Pi=\sT[\Gamma_{0}]\cup\{\sT(A\land
B)\}\cup\sF[\Delta]$, and one application of $\sT\!\land$ produces the child
$\sT[\Gamma_{0}]\cup\{\sT A,\sT B\}\cup\sF[\Delta]$, which is the block of the
premiss and is refutable by the induction hypothesis.  If the last rule is
$\lor\mathrm L$, one application of $\sT\!\lor$ produces the two blocks of the
two premisses.  If the last rule is $\arr\mathrm L$, one application of
$\sT\!\arr$ produces the left child
$\sT[\Gamma_{0}]\cup\{\sT(A\arr B)\}\cup\sF[\Delta]\cup\{\sF A\}$, the block of
the left premiss $A\arr B,\Gamma_{0}\seq\Delta,A$, and the right child
$\sT[\Gamma_{0}]\cup\{\sT B\}\cup\sF[\Delta]$, the block of the right premiss.
Dually, $\land\mathrm R$ is matched by $\sF\!\land$ and $\lor\mathrm R$ by
$\sF\!\lor$.  Finally, if the last rule is $\arr\mathrm R$, with conclusion
$\Gamma\seq\Delta_{0},A\arr B$ and premiss $A,\Gamma\seq B$, then one
application of $\sF\!\arr$ to
$\Pi=\sT[\Gamma]\cup\sF[\Delta_{0}]\cup\{\sF(A\arr B)\}$ produces
$\Ptt\cup\{\sT A,\sF B\}=\sT[\Gamma]\cup\{\sT A,\sF B\}$, which is precisely
the block of the premiss.  In each case the induction hypothesis applies to the
blocks of the premisses.

\emph{From blocks to sequents.}  We show by induction on the height of a closed
tableau for $\Pi$ that $\Gti\vdash\Gamma_{\Pi}\seq\Delta_{\Pi}$.  If $\Pi$ is
closed because $\sT p,\sF p\in\Pi$, then $p\in\Gamma_{\Pi}$ and
$p\in\Delta_{\Pi}$, so the sequent is $(\mathrm{id})$; if because
$\sT\bot\in\Pi$, it is $(\mathrm{L}\bot)$.  Otherwise let $R$ be the rule
applied at the root.

Suppose $R=\sT\!\land$ with principal $\sT(A\land B)$, so that the child is
$\Pi_{1}=(\Pi\setminus\{\sT(A\land B)\})\cup\{\sT A,\sT B\}$.  By the induction
hypothesis $\Gti\vdash\Gamma_{\Pi_{1}}\seq\Delta_{\Pi_{1}}$, that is,
$(\Gamma_{\Pi}\setminus\{A\land B\})\cup\{A,B\}\seq\Delta_{\Pi}$ with the
antecedent read as a multiset without repetitions.  By Lemma~\ref{lem:g3wk} we
may adjoin further copies, obtaining
$A,B,\Gamma_{\Pi}\setminus\{A\land B\}\seq\Delta_{\Pi}$, and $\land\mathrm L$
yields $A\land B,\Gamma_{\Pi}\setminus\{A\land B\}\seq\Delta_{\Pi}$, which is
$\Gamma_{\Pi}\seq\Delta_{\Pi}$.  The rules $\sT\!\lor$, $\sF\!\land$,
$\sF\!\lor$ are treated in the same way, using $\lor\mathrm L$,
$\land\mathrm R$, $\lor\mathrm R$ respectively.

Suppose $R=\sT\!\arr$ with principal $\sT(A\arr B)$.  The induction hypothesis
applied to the left child $\Pi\cup\{\sF A\}$ gives
$\Gamma_{\Pi}\seq\Delta_{\Pi},A$ up to the adjunction of copies, and applied to
the right child $(\Pi\setminus\{\sT(A\arr B)\})\cup\{\sT B\}$ gives
$B,\Gamma_{\Pi}\setminus\{A\arr B\}\seq\Delta_{\Pi}$.  Since $A\arr B$ belongs
to $\Gamma_{\Pi}$, the first of these is the left premiss of $\arr\mathrm L$
and the second its right premiss, so $\arr\mathrm L$ yields
$\Gamma_{\Pi}\seq\Delta_{\Pi}$.

Suppose finally $R=\sF\!\arr$ with principal $\sF(A\arr B)$, so that the child
is $\Ptt\cup\{\sT A,\sF B\}$.  The induction hypothesis gives
$A,\Gamma_{\Pi}\seq B$ up to the adjunction of copies, and $\arr\mathrm R$
yields $\Gamma_{\Pi}\seq\Delta_{\Pi}\setminus\{A\arr B\},A\arr B$, that is,
$\Gamma_{\Pi}\seq\Delta_{\Pi}$; note that $\arr\mathrm R$ permits an arbitrary
succedent context, so the formulae of $\Delta_{\Pi}$ purged by $\sF\!\arr$ are
restored at exactly this point.
\end{proof}

The last observation is worth isolating.  The purge does not lose information:
what the block calculus deletes on passing to the child, the rule
$\arr\!\mathrm R$ reinstates on passing from the premiss to the conclusion.  In
the refutational reading the deleted formulae are simply of no further use,
since the countermodel under construction has moved to a later world; in the
synthetic reading they are context that the rule is free to restore.

\begin{rem}[The two directions are not equally direct]\label{rem:algo}
The proof just given is algorithmic in both directions, but the algorithms are
not of the same kind, and the asymmetry is worth recording, as it is of the sort
that von Plato \cite{vonPlato2017} isolates in the translation between tree and
linear natural deduction.  From a closed tableau to a derivation one proceeds by
a single pass: invert the orientation of the tree, read each closed leaf as an
initial sequent and each rule application as the corresponding sequent rule, and
insert weakenings where the passage from a set to a multiset requires copies.
From a derivation to a closed tableau a preliminary pass is needed, in which
repetitions are contracted away; without it the block rule, which removes the
only copy of its principal formula, would delete material that the sequent rule
leaves in place.  The bookkeeping that one format spends on discharge labels the
other spends on multiplicities.
\end{rem}

\begin{cor}[Subformula property for $\Gti$]\label{cor:subfg3}
Every formula occurring in a derivation of $\Gamma\seq\Delta$ in $\Gti$ is a
subformula of a formula of $\Gamma\cup\Delta$.
\end{cor}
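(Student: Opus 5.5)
The claim is a direct inspection of the rules of $\Gti$, by induction on the height of the derivation. I would show that every formula occurring in a derivation of $\Gamma\seq\Delta$ lies in $\Sub(\Gamma\cup\Delta)$, proceeding from the endsequent upward through the tree. The root trivially satisfies the claim. The inductive step is the observation that, for every rule of Definition~\ref{def:g3}, each formula occurring in a premiss is either a formula of the conclusion or an immediate subformula of the principal formula of the conclusion. Since $\Sub$ is closed under taking subformulae, the set $\Sub(\Gamma\cup\Delta)$ is preserved as we pass from the conclusion of any rule to its premisses.

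The case check is routine. In $\land\mathrm L$, $\lor\mathrm L$, $\land\mathrm R$ and $\lor\mathrm R$ the premisses carry the context together with $A$ and/or $B$. In $\arr\mathrm L$ the left premiss carries $A\arr B$ itself (the retained copy), the context, and $A$; the right premiss carries $B$ and the context. In $\arr\mathrm R$ the premiss $A,\Gamma\seq B$ carries only part of the context together with $A$ and $B$. The fact that the purge deletes $\Delta$ only shrinks the set of formulae, exactly as remarked in the proof of Proposition~\ref{prop:subf}.

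Alternatively, and more in the spirit of the section, one could transfer the result through Theorem~\ref{thm:corr} and Proposition~\ref{prop:subf}. That route is awkward, however. The translation from derivations to tableaux contracts repetitions, and the translation back inserts weakenings, so an arbitrary given derivation is not literally the image of a tableau. I would therefore present the direct induction and note that it is the sequent-calculus reading of Proposition~\ref{prop:subf}.

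There is no real obstacle here. The only point needing care is that the statement concerns \emph{every} derivation, not just some derivation. This is why I would argue directly on the rules of $\Gti$, which is cut-free by design, rather than through the correspondence with $\bti$.
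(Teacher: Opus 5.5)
Your proof is correct and is essentially the paper's own argument: inspection of the rules of $\Gti$, each of whose premisses contains only formulae of its conclusion or immediate subformulae of its principal formula, with the purge in $\arr\mathrm R$ only shrinking the context. Your caution about the alternative route through Theorem~\ref{thm:corr}, which the paper also mentions, is well placed. That transport yields the property only for the derivations reconstructed from tableaux, not literally for an arbitrary given derivation.
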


\begin{proof}
Immediate by inspection of the rules, each of which has, in its premisses, only
subformulae of formulae of its conclusion; alternatively, transport
Proposition~\ref{prop:subf} through Theorem~\ref{thm:corr}.
\end{proof}

\begin{rem}[Why cut does not permute in $\Gti$]\label{rem:cutobstruction}
It is natural to expect, at this point, a syntactic proof of the admissibility
of
\[
\frac{\Gamma\seq\Delta,A\qquad A,\Gamma'\seq\Delta'}
     {\Gamma,\Gamma'\seq\Delta,\Delta'}\ \mathrm{Cut}
\]
by the usual double induction on the weight of the cut formula and the sum of
the heights of the premisses.  The reductions in which the cut formula is
principal in both premisses go through, and so do the permutations upward past
every rule except one.  The exception is the permutation past $\arr\mathrm R$
in the \emph{right} premiss.  Suppose $A,\Gamma'\seq\Delta'$ is concluded by
$\arr\mathrm R$ with principal formula $C\arr D\in\Delta'$, from the premiss
$C,A,\Gamma'\seq D$.  Permuting the cut upward yields
$C,\Gamma,\Gamma'\seq\Delta,D$, whose succedent is not the single formula $D$;
and $\arr\mathrm R$ cannot be re-applied to it, since that rule requires its
premiss to have exactly one formula in the succedent.  The obstruction
disappears when $\Delta$ is empty, which is why the single-succedent calculus
$\Gip$ introduced in Section~\ref{sec:sem} admits the standard syntactic
proof.  We shall
obtain the admissibility of cut in $\Gti$ semantically, as
Corollary~\ref{cor:semcut} below; it is a fact about the calculus, not about the
method, that the succedent-emptying rule and the naive permutation of cut do not
sit well together.
\end{rem}

\section{Soundness, completeness and their corollaries}\label{sec:sem}

\subsection{Realization and soundness}

\begin{dfn}[Realization]\label{def:real}
Let $\mathcal M=(W,\le,V)$ be a Kripke model and $w\in W$.  The block $\Pi$ is
\emph{realized at $w$} if $w\forces A$ for every $\sT A\in\Pi$ and
$w\nforces B$ for every $\sF B\in\Pi$.  It is \emph{realizable} if it is
realized at some world of some model.
\end{dfn}

Realizability is the exact counterpart, for blocks, of the invalidity of the
associated sequent: $\Pi(\Gamma\seq\Delta)$ is realizable if and only if
$\Gamma\seq\Delta$ is not valid, by Definitions~\ref{def:kripke} and
\ref{def:real}.

\begin{thm}[Soundness]\label{thm:sound}
If a block $\Pi$ is refutable in $\bti$, then $\Pi$ is not realizable.
\end{thm}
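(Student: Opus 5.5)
We argue by induction on the height of a closed tableau for $\Pi$, proving the contrapositive in local form: for every rule of Definition~\ref{def:rules}, if the premiss block is realized at a world $w$ of a model $\mathcal M$, then at least one child block is realized at some world $v\ge w$ of the same model. Granting this, a realized root would propagate a realized block down some branch to a leaf, and a leaf of a closed tableau is a closed block. A closed block is never realized: $\sT p,\sF p$ would require $w\forces p$ and $w\nforces p$, and $\sT\bot$ would require $w\forces\bot$. Formally, the induction shows that every $\Pi$ with $\bti\vdash_n\Pi$ is realized at no world of any model. The base case $n=0$ is the closure observation, and the inductive step applies the local claim to the root rule.

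The local claim is checked rule by rule. For the four rules of types $\alpha$ and $\beta$ we take $v=w$ and use the equivalences listed in the proof of Proposition~\ref{prop:locdec}. Formulae of $\Pi$ other than the principal one are realized at $w$ by hypothesis. If the principal formula reappears in a child it is still realized, and removing it can only preserve realization.

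For $\sT\!\arr$ we again take $v=w$. Since $w\forces A\arr B$, reflexivity of $\le$ gives either $w\nforces A$ or $w\forces B$. In the first case the left child $\Pi,\sT(A\arr B),\sF A$ is realized at $w$; the retained principal formula causes no difficulty, being forced at $w$ by hypothesis. In the second case the right child $\Pi,\sT B$ is realized at $w$.

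The only step with genuine content is $\sF\!\arr$, and this is where Lemma~\ref{lem:mono} enters. From $w\nforces A\arr B$ the forcing clause yields some $v\ge w$ with $v\forces A$ and $v\nforces B$. Every $\sT C\in\Ptt$ satisfies $w\forces C$, hence $v\forces C$ by monotonicity. The child $\Ptt,\sT A,\sF B$ is therefore realized at $v$. The purge is exactly what makes this work: the $\sF$-formulae of $\Pi$ need not remain unforced at $v$, and they are absent from the child. This is why the local claim has to allow a change of world. The induction hypothesis, however, is stated for all worlds of all models, so passing from $w$ to $v$ costs nothing.
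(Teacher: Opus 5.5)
Your proposal is correct and follows the paper's proof essentially step for step. Both argue by induction on the height of a closed tableau, with closed blocks unrealizable in the base case. Both then check, rule by rule, that a block realized at $w$ has a child realized at some world: at $w$ itself for the $\alpha$, $\beta$ and $\sT\!\arr$ rules (reflexivity doing the work in the last), and at a later $v\ge w$ supplied by the forcing clause and Lemma~\ref{lem:mono} for $\sF\!\arr$.
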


\begin{proof}
By induction on the height $n$ of a closed tableau for $\Pi$.

If $n=0$ then $\Pi$ is closed.  Suppose $\Pi$ were realized at $w$.  If
$\sT p,\sF p\in\Pi$ then $w\forces p$ and $w\nforces p$, which is impossible;
if $\sT\bot\in\Pi$ then $w\forces\bot$, again impossible.

Let $n>0$ and let the root apply a rule $R$ with principal formula $\varphi$.
We show that if $\Pi$ is realized at $w$ in $\mathcal M$, then some child of
$\Pi$ under $R$ is realized at some world of $\mathcal M$; the induction
hypothesis, applicable since each child has a closed tableau of height at most
$n-1$, then yields the contradiction.

\emph{$R=\sT\!\land$, $\varphi=\sT(A\land B)$.}  From $w\forces A\land B$ we get
$w\forces A$ and $w\forces B$, so the child, which differs from $\Pi$ by the
removal of $\varphi$ and the addition of $\sT A,\sT B$, is realized at $w$.

\emph{$R=\sF\!\lor$, $\varphi=\sF(A\lor B)$.}  From $w\nforces A\lor B$ we get
$w\nforces A$ and $w\nforces B$; the child is realized at $w$.

\emph{$R=\sT\!\lor$, $\varphi=\sT(A\lor B)$.}  From $w\forces A\lor B$ we get
$w\forces A$ or $w\forces B$, so one of the two children is realized at $w$.

\emph{$R=\sF\!\land$, $\varphi=\sF(A\land B)$.}  From $w\nforces A\land B$ we
get $w\nforces A$ or $w\nforces B$, so one of the two children is realized at
$w$.

\emph{$R=\sT\!\arr$, $\varphi=\sT(A\arr B)$.}  Suppose $w\forces A\arr B$.  If
$w\nforces A$, the left child $\Pi\cup\{\sF A\}$ --- in which $\varphi$ is
retained --- is realized at $w$.  If $w\forces A$, then, $\le$ being reflexive,
the forcing clause for the implication gives $w\forces B$, and the right child
$(\Pi\setminus\{\varphi\})\cup\{\sT B\}$ is realized at $w$.

\emph{$R=\sF\!\arr$, $\varphi=\sF(A\arr B)$.}  Suppose $w\nforces A\arr B$.  By
the forcing clause there is $v\ge w$ with $v\forces A$ and $v\nforces B$.  By
Lemma~\ref{lem:mono}, $v\forces C$ for every $\sT C\in\Pi$, so $\Ptt$ is
realized at $v$; adding $\sT A$ and $\sF B$, the child
$\Ptt\cup\{\sT A,\sF B\}$ is realized at $v$.  Nothing is claimed about the
$\sF$-signed formulae of $\Pi$ at $v$, and nothing can be: this is exactly why
they are purged.
\end{proof}

\begin{cor}[Soundness of $\Gti$]\label{cor:soundg3}
If $\Gti\vdash\Gamma\seq\Delta$ then $\Gamma\seq\Delta$ is valid.
\end{cor}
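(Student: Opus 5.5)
The plan is to derive this directly from Theorem~\ref{thm:corr} and Theorem~\ref{thm:sound}, without a separate induction over $\Gti$ derivations. Assume $\Gti\vdash\Gamma\seq\Delta$. By the second formulation of Theorem~\ref{thm:corr}, the block $\Pi(\Gamma\seq\Delta)=\sT[\Gamma]\cup\sF[\Delta]$ is then refutable in $\bti$. Theorem~\ref{thm:sound} says that this block is not realizable. So the argument comes down to relating realizability of the associated block to validity of the sequent.

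Suppose, for contradiction, that $\Gamma\seq\Delta$ is not valid. Then there is a model $\mathcal M$ and a world $w$ with $w\forces A$ for every $A\in\Gamma$ and $w\nforces B$ for every $B\in\Delta$. By Definition~\ref{def:real}, this says exactly that $\sT[\Gamma]\cup\sF[\Delta]$ is realized at $w$. That contradicts the unrealizability obtained above, so the sequent is valid. This is the equivalence noted in the remark after Definition~\ref{def:real}, and I would just unfold the two definitions here.

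There is one small point to check. The block is a set, while $\Gamma$ and $\Delta$ are multisets. Realization and validity both depend only on which formulae occur, not on how many times, so passing from multisets to sets loses nothing. No step is a real obstacle. All the substantive work is already done in Theorem~\ref{thm:corr}, whose sequent-to-block direction takes care of multiplicities through Lemma~\ref{lem:g3ctr}.

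An alternative would be a direct induction on $\Gti$ derivations, checking that each rule preserves validity. The only nontrivial case there is $\arr\mathrm R$, which needs Lemma~\ref{lem:mono} to carry the antecedent up to a later world. I would not use it, because it simply repeats the $\sF\!\arr$ case of Theorem~\ref{thm:sound}.
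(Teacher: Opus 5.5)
Your proposal is correct and follows the paper's own proof: Theorem~\ref{thm:corr} makes $\Pi(\Gamma\seq\Delta)$ refutable, Theorem~\ref{thm:sound} makes it unrealizable, and unrealizability of the associated block is exactly validity of the sequent. Your remark on multisets versus sets is a harmless check that the paper leaves implicit.
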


\begin{proof}
By Theorem~\ref{thm:corr} the block $\Pi(\Gamma\seq\Delta)$ is refutable, hence
by Theorem~\ref{thm:sound} not realizable, hence $\Gamma\seq\Delta$ is valid.
\end{proof}

\subsection{Saturation and completeness}

Completeness is proved on the sequent side, where the structural lemmas of
Section~\ref{sec:g3} are available in the form in which they are needed, and
transported back to blocks by Theorem~\ref{thm:corr}.  Throughout this
subsection $\Sigma$ denotes a finite set of formulae closed under subformulae.

\begin{dfn}[Saturated pairs]\label{def:sat}
A pair $(\Gamma,\Delta)$ of subsets of $\Sigma$ is \emph{$\Sigma$-saturated} if
\begin{enumerate}
\item[\textup{(a)}] $\Gamma\seq\Delta$ is not derivable in $\Gti$;
\item[\textup{(b)}] $A\land B\in\Gamma$ implies $A\in\Gamma$ and $B\in\Gamma$;
\item[\textup{(c)}] $A\lor B\in\Gamma$ implies $A\in\Gamma$ or $B\in\Gamma$;
\item[\textup{(d)}] $A\arr B\in\Gamma$ implies $A\in\Delta$ or $B\in\Gamma$;
\item[\textup{(e)}] $A\land B\in\Delta$ implies $A\in\Delta$ or $B\in\Delta$;
\item[\textup{(f)}] $A\lor B\in\Delta$ implies $A\in\Delta$ and $B\in\Delta$.
\end{enumerate}
\end{dfn}

Clause (a) already yields $\bot\notin\Gamma$, since $\bot,\Gamma\seq\Delta$ is
an initial sequent.  Observe that no clause governs an implication in $\Delta$:
that case is not a condition on the pair but a demand for a further world, and
it is met in the truth lemma by producing one.

\begin{lem}[Saturation]\label{lem:sat}
Let $\Gamma,\Delta\subseteq\Sigma$ with $\Gamma\seq\Delta$ not derivable in
$\Gti$.  Then there are $\Gamma^{*},\Delta^{*}$ with
$\Gamma\subseteq\Gamma^{*}\subseteq\Sigma$,
$\Delta\subseteq\Delta^{*}\subseteq\Sigma$ and $(\Gamma^{*},\Delta^{*})$
$\Sigma$-saturated.
\end{lem}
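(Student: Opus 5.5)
We build the saturated pair by a finite iteration that, at each stage, preserves underivability and strictly enlarges the pair within $\Sigma$. Since $\Sigma$ is finite, the pair $(\Gamma,\Delta)$ can be enlarged only finitely many times, so the process stops. The only thing to check is that whenever one of the clauses (b)--(f) fails for a current underivable pair $(\Gamma',\Delta')$, there is a one-step extension that stays inside $\Sigma$, remains underivable, and repairs that instance. At a stage where no clause fails, the pair is $\Sigma$-saturated.

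The five repairs come from the rules of $\Gti$ read contrapositively, together with weakening (Lemma~\ref{lem:g3wk}) and contraction (Lemma~\ref{lem:g3ctr}). These let us pass freely between the sets $\Gamma',\Delta'$ and multisets containing extra copies of their members. All added formulae are subformulae of members of $\Sigma$, hence in $\Sigma$.

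\emph{Clause (b).} Suppose $A\land B\in\Gamma'$ and $A,B,\Gamma'\seq\Delta'$ were derivable. Then $\land\mathrm L$ followed by contraction would derive $\Gamma'\seq\Delta'$. So adding $A,B$ to $\Gamma'$ preserves underivability.

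\emph{Clause (c).} If $A\lor B\in\Gamma'$ and both $A,\Gamma'\seq\Delta'$ and $B,\Gamma'\seq\Delta'$ were derivable, then $\lor\mathrm L$ and contraction would derive $\Gamma'\seq\Delta'$. So one of the two one-formula extensions of $\Gamma'$ is underivable, and we take it.

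\emph{Clause (d).} If $A\arr B\in\Gamma'$, we use $\arr\mathrm L$. Its left premiss $A\arr B,\Gamma'\seq\Delta',A$ becomes, after contraction, the extension of $\Delta'$ by $A$. Its right premiss becomes the extension of $\Gamma'$ by $B$. One of the two must be underivable, and we take it.

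\emph{Clauses (e) and (f).} These are the duals, handled by $\land\mathrm R$ and $\lor\mathrm R$ with contraction on the right.

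A repair adds a formula that was not yet present, since the clause was failing. Hence the measure $|\Sigma\setminus\Gamma'|+|\Sigma\setminus\Delta'|$ strictly decreases, and this yields termination.

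The step needing the most care is clause (d). There the retained principal formula in the left premiss of $\arr\mathrm L$ must be absorbed into $\Gamma'$, which already contains it. This is exactly where hp-contraction, or simply the set reading of multisets, is used. A related point is to make sure no clause concerning $\arr$ in $\Delta$ is needed. Indeed, by the remark following Definition~\ref{def:sat}, none is, which is fortunate: $\arr\mathrm R$ is not invertible, so there would be no contrapositive repair for an implication in $\Delta$.
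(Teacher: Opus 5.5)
Your proof is correct and follows the paper's argument: the pair is enlarged inside $\Sigma$ one violated clause at a time, each repair is justified by reading the corresponding rule of $\Gti$ contrapositively, and termination comes from the finiteness of $\Sigma$. The only local difference is in how each contradiction is reached: you apply the rule with the whole current context, principal formula included, and then contract the duplicated principal formula (with weakening supplying any extra copies), whereas the paper first removes the principal formula with the invertibility lemma (Lemma~\ref{lem:g3inv}) and then contracts the duplicated component, so your version needs no inversion at all.
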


\begin{proof}
We enlarge the pair step by step, each step adding one formula of $\Sigma$ to
$\Gamma$ or to $\Delta$ and preserving the non-derivability of the associated
sequent.  Since $\Sigma$ is finite and $|\Gamma|+|\Delta|$ strictly increases,
the process halts, and it halts only at a pair satisfying (b)--(f).  It remains
to check that each violated clause admits a licit step.  In each case we assume
that the pair $(\Gamma,\Delta)$ has $\Gamma\seq\Delta$ underivable, and we
argue by contradiction.

\emph{(b).}  Let $A\land B\in\Gamma$ and $A\notin\Gamma$, and suppose that
$A,\Gamma\seq\Delta$ is derivable.  Writing $\Gamma=A\land B,\Gamma_{0}$ and
applying Lemma~\ref{lem:g3inv}(i) to the occurrence of $A\land B$, we obtain
$A,A,B,\Gamma_{0}\seq\Delta$; one application of Lemma~\ref{lem:g3ctr} gives
$A,B,\Gamma_{0}\seq\Delta$, and $\land\mathrm L$ gives $\Gamma\seq\Delta$,
against the hypothesis.  Hence $A$ may be added to $\Gamma$; the same argument
applies to $B$.

\emph{(c).}  Let $A\lor B\in\Gamma$ with $A\notin\Gamma$ and $B\notin\Gamma$,
and suppose that both $A,\Gamma\seq\Delta$ and $B,\Gamma\seq\Delta$ are
derivable.  Writing $\Gamma=A\lor B,\Gamma_{0}$ and applying
Lemma~\ref{lem:g3inv}(ii) to each, we obtain $A,A,\Gamma_{0}\seq\Delta$ and
$B,B,\Gamma_{0}\seq\Delta$; contraction gives $A,\Gamma_{0}\seq\Delta$ and
$B,\Gamma_{0}\seq\Delta$, and $\lor\mathrm L$ gives $\Gamma\seq\Delta$.  Hence
at least one of the two additions is licit.

\emph{(d).}  Let $A\arr B\in\Gamma$ with $A\notin\Delta$ and $B\notin\Gamma$,
and suppose that both $\Gamma\seq\Delta,A$ and $B,\Gamma\seq\Delta$ are
derivable.  Write $\Gamma=A\arr B,\Gamma_{0}$.  The first sequent is the left
premiss of $\arr\mathrm L$.  From the second, Lemma~\ref{lem:g3inv}(iii) gives
$B,B,\Gamma_{0}\seq\Delta$ and contraction gives $B,\Gamma_{0}\seq\Delta$, the
right premiss.  Hence $\arr\mathrm L$ gives $\Gamma\seq\Delta$.

\emph{(e).}  Let $A\land B\in\Delta$ with $A\notin\Delta$ and $B\notin\Delta$,
and suppose that both $\Gamma\seq\Delta,A$ and $\Gamma\seq\Delta,B$ are
derivable.  Write $\Delta=\Delta_{0},A\land B$.  Applying
Lemma~\ref{lem:g3inv}(iv) to the occurrence of $A\land B$ in the first gives
$\Gamma\seq\Delta_{0},A,A$, and contraction gives $\Gamma\seq\Delta_{0},A$;
similarly $\Gamma\seq\Delta_{0},B$.  Then $\land\mathrm R$ gives
$\Gamma\seq\Delta$.

\emph{(f).}  Let $A\lor B\in\Delta$ with $A\notin\Delta$, and suppose that
$\Gamma\seq\Delta,A$ is derivable.  Write $\Delta=\Delta_{0},A\lor B$.
Lemma~\ref{lem:g3inv}(v) gives $\Gamma\seq\Delta_{0},A,B,A$, contraction gives
$\Gamma\seq\Delta_{0},A,B$, and $\lor\mathrm R$ gives $\Gamma\seq\Delta$.  The
same for $B$.

All the formulae adjoined are subformulae of formulae already present, hence
members of $\Sigma$, and the pair remains a pair of subsets of $\Sigma$.
\end{proof}

\begin{dfn}[The canonical model]\label{def:canon}
Let $\mathcal M_{\Sigma}=(W_{\Sigma},\le,V)$ where $W_{\Sigma}$ is the set of
$\Sigma$-saturated pairs, $(\Gamma,\Delta)\le(\Gamma',\Delta')$ if and only if
$\Gamma\subseteq\Gamma'$, and $V(\Gamma,\Delta)=\Gamma\cap\mathit{Var}$.
\end{dfn}

The relation $\le$ is reflexive and transitive, and $V$ is monotone by
construction, so $\mathcal M_{\Sigma}$ is a Kripke model whenever
$W_{\Sigma}\ne\emptyset$.  It has at most $4^{|\Sigma|}$ worlds.

\begin{lem}[Truth lemma]\label{lem:truth}
For every $A\in\Sigma$ and every $(\Gamma,\Delta)\in W_{\Sigma}$:
\begin{enumerate}
\item[\textup{(1)}] if $A\in\Gamma$ then $(\Gamma,\Delta)\forces A$;
\item[\textup{(2)}] if $A\in\Delta$ then $(\Gamma,\Delta)\nforces A$.
\end{enumerate}
\end{lem}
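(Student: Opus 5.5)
We prove (1) and (2) simultaneously by induction on the number of connectives of $A$, for all worlds $(\Gamma,\Delta)\in W_{\Sigma}$ at once. In the atomic case, if $p\in\Gamma$ then $p\in V(\Gamma,\Delta)$ by Definition~\ref{def:canon}. If $p\in\Delta$, then $p\notin\Gamma$, since otherwise $\Gamma\seq\Delta$ would be an instance of $(\mathrm{id})$, against clause (a). For $\bot$, clause (1) is vacuous because $\bot\notin\Gamma$, as observed after Definition~\ref{def:sat}, and clause (2) holds because no world forces $\bot$.

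The conjunction and disjunction cases are read off directly from clauses (b), (c), (e), (f) together with the induction hypothesis, since all immediate subformulae lie in $\Sigma$. For example, if $A\lor B\in\Delta$, then (f) gives $A,B\in\Delta$, so neither is forced and neither is $A\lor B$. If $A\land B\in\Delta$, then (e) gives one of $A,B$ in $\Delta$, and that one is not forced.

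For (1) with $A\arr B\in\Gamma$, let $(\Gamma',\Delta')\ge(\Gamma,\Delta)$ force $A$; we must show it forces $B$. Since $\Gamma\subseteq\Gamma'$, we have $A\arr B\in\Gamma'$, so clause (d) at $(\Gamma',\Delta')$ gives $A\in\Delta'$ or $B\in\Gamma'$. The first alternative is excluded: by (2) of the induction hypothesis it would give $(\Gamma',\Delta')\nforces A$. Hence $B\in\Gamma'$, and (1) of the induction hypothesis gives $(\Gamma',\Delta')\forces B$. The argument uses only persistence of $\Gamma$ along $\le$, which holds by definition.

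The main step is (2) for $A\arr B\in\Delta$, where no saturation clause applies and a new world must be produced. First we claim that $A,\Gamma\seq B$ is not derivable. Otherwise $\arr\mathrm R$, which allows an arbitrary succedent context, would derive $\Gamma\seq\Delta_{0},A\arr B$ with $\Delta=\Delta_{0},A\arr B$, i.e. $\Gamma\seq\Delta$, contradicting (a). Here $\Gamma$ and $\Delta$ are sets read as multisets without repetition, as in Convention~\ref{con:blockseq}, and if $A\in\Gamma$ already, contraction (Lemma~\ref{lem:g3ctr}) handles the duplicate. Since $\{A\}\cup\Gamma$ and $\{B\}$ are subsets of $\Sigma$, Lemma~\ref{lem:sat} yields a saturated pair $(\Gamma^{*},\Delta^{*})$ with $\Gamma\cup\{A\}\subseteq\Gamma^{*}$ and $B\in\Delta^{*}$. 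Then $(\Gamma^{*},\Delta^{*})\ge(\Gamma,\Delta)$, and by the induction hypothesis it forces $A$ and does not force $B$. Hence $(\Gamma,\Delta)\nforces A\arr B$. This is precisely the point where the purge of the $\sF$-part appears semantically: the new world inherits $\Gamma$ but not $\Delta$.
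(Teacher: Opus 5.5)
Your proof is correct and follows the paper's argument step for step: the same simultaneous induction over all worlds, clause (d) at the larger world for part (1) of the implication case, and for part (2) the non-derivability of $A,\Gamma\seq B$ via $\arr\mathrm R$, followed by Lemma~\ref{lem:sat} to produce the required world. One small correction to your side remark on the case $A\in\Gamma$, a point the paper passes over in silence: Lemma~\ref{lem:sat} needs the set-sequent $\Gamma\seq B$ to be underivable, and what transfers underivability from $A,\Gamma\seq B$ to it is weakening (Lemma~\ref{lem:g3wk}), not contraction, which transfers derivability in the opposite direction.
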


\begin{proof}
By induction on the number of connectives of $A$, the two claims being proved
simultaneously.

\emph{$A=p$.}  (1) is the definition of $V$.  For (2), if $p\in\Delta$ and
$p\in\Gamma$ then $\Gamma\seq\Delta$ would be $(\mathrm{id})$, against clause
(a); hence $p\notin\Gamma$ and $(\Gamma,\Delta)\nforces p$.

\emph{$A=\bot$.}  (1) is vacuous, since $\bot\notin\Gamma$ by clause (a).  (2)
holds because no world forces $\bot$.

\emph{$A=B\land C$.}  For (1), clause (b) gives $B,C\in\Gamma$, and the
induction hypothesis gives $\forces B$ and $\forces C$.  For (2), clause (e)
gives $B\in\Delta$ or $C\in\Delta$, and the induction hypothesis gives
$\nforces B$ or $\nforces C$.

\emph{$A=B\lor C$.}  Symmetrically, using clauses (c) and (f).

\emph{$A=B\arr C$, claim (1).}  Let $B\arr C\in\Gamma$ and let
$(\Gamma',\Delta')\ge(\Gamma,\Delta)$ with $(\Gamma',\Delta')\forces B$.  Since
$\Gamma\subseteq\Gamma'$ we have $B\arr C\in\Gamma'$, so clause (d) applied to
$(\Gamma',\Delta')$ gives $B\in\Delta'$ or $C\in\Gamma'$.  The first is
impossible: the induction hypothesis (2) for $B$ would give
$(\Gamma',\Delta')\nforces B$.  Hence $C\in\Gamma'$ and, by the induction
hypothesis (1) for $C$, $(\Gamma',\Delta')\forces C$.  As $(\Gamma',\Delta')$
was arbitrary, $(\Gamma,\Delta)\forces B\arr C$.

\emph{$A=B\arr C$, claim (2).}  Let $B\arr C\in\Delta$.  We claim first that
$B,\Gamma\seq C$ is not derivable in $\Gti$.  Indeed, were it derivable, one
application of $\arr\mathrm R$ --- which imposes no constraint on the succedent
context of its conclusion --- would give
$\Gamma\seq\Delta\setminus\{B\arr C\},B\arr C$, that is $\Gamma\seq\Delta$,
against clause (a).  Now $B\arr C\in\Sigma$ and $\Sigma$ is closed under
subformulae, so $\{B\}\cup\Gamma\subseteq\Sigma$ and $\{C\}\subseteq\Sigma$.  By
Lemma~\ref{lem:sat} there is a $\Sigma$-saturated pair $(\Gamma',\Delta')$ with
$\{B\}\cup\Gamma\subseteq\Gamma'$ and $C\in\Delta'$.  Then
$(\Gamma,\Delta)\le(\Gamma',\Delta')$, and the induction hypothesis gives
$(\Gamma',\Delta')\forces B$ and $(\Gamma',\Delta')\nforces C$.  Hence
$(\Gamma,\Delta)\nforces B\arr C$.
\end{proof}

\begin{thm}[Completeness, with the finite model property]\label{thm:complete}
Let $\Gamma_{0}\seq\Delta_{0}$ be a sequent not derivable in $\Gti$.  Then
there is a finite Kripke model with at most $4^{|\Sigma|}$ worlds, where
$\Sigma=\Sub(\Gamma_{0}\cup\Delta_{0})$, and a world of it at which every
formula of $\Gamma_{0}$ is forced and no formula of $\Delta_{0}$ is forced.
Consequently $\Gamma_{0}\seq\Delta_{0}$ is derivable in $\Gti$ if and only if
it is valid.
\end{thm}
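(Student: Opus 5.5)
We assemble the statement from the saturation lemma, the canonical model and the truth lemma. Put $\Sigma=\Sub(\Gamma_{0}\cup\Delta_{0})$, a finite set closed under subformulae. One small point comes first: Definition~\ref{def:sat} speaks of pairs of \emph{subsets} of $\Sigma$, while $\Gamma_{0},\Delta_{0}$ are multisets. By Lemma~\ref{lem:g3ctr}, deleting repeated occurrences preserves derivability, and by Lemma~\ref{lem:g3wk} so does restoring them. Hence $\Gamma_{0}\seq\Delta_{0}$ is derivable if and only if the sequent of its underlying sets is, and we may read $\Gamma_{0},\Delta_{0}$ as subsets of $\Sigma$ whose associated sequent is underivable.

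Lemma~\ref{lem:sat} then yields a $\Sigma$-saturated pair $(\Gamma^{*},\Delta^{*})$ with $\Gamma_{0}\subseteq\Gamma^{*}$ and $\Delta_{0}\subseteq\Delta^{*}$. In particular $W_{\Sigma}\neq\emptyset$, so $\mathcal M_{\Sigma}$ of Definition~\ref{def:canon} is a Kripke model. It is finite: each world is a pair of subsets of $\Sigma$, which gives at most $2^{|\Sigma|}\cdot2^{|\Sigma|}=4^{|\Sigma|}$ worlds.

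Apply Lemma~\ref{lem:truth} at the world $(\Gamma^{*},\Delta^{*})$. Every $A\in\Gamma_{0}\subseteq\Gamma^{*}$ is forced there by clause (1), and every $B\in\Delta_{0}\subseteq\Delta^{*}$ is not forced there by clause (2). Both clauses apply because all these formulae lie in $\Sigma$. This is the required countermodel.

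For the biconditional, the direction from derivable to valid is Corollary~\ref{cor:soundg3}. For the converse, argue by contraposition: if the sequent is not derivable, the world just constructed witnesses that it is not valid, in the sense of Definition~\ref{def:kripke}.

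The only step that needs any care is the passage between multisets and sets before invoking Lemma~\ref{lem:sat}, and the structural lemmas already cited dispose of it. Everything else is bookkeeping over results already proved.
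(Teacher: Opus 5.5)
Your proof is correct and follows the paper's own argument: saturate via Lemma~\ref{lem:sat}, evaluate in the canonical model $\mathcal M_{\Sigma}$ with Lemma~\ref{lem:truth}, and use soundness (Corollary~\ref{cor:soundg3}) for the converse. Your passage from multisets to their underlying sets via Lemmas~\ref{lem:g3ctr} and~\ref{lem:g3wk} is correct, and it makes explicit a step the paper leaves tacit when it writes $\Gamma_{0},\Delta_{0}\subseteq\Sigma$.
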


\begin{proof}
The set $\Sigma$ is finite and closed under subformulae, and
$\Gamma_{0},\Delta_{0}\subseteq\Sigma$.  By Lemma~\ref{lem:sat} there is a
$\Sigma$-saturated pair $(\Gamma^{*},\Delta^{*})$ with
$\Gamma_{0}\subseteq\Gamma^{*}$ and $\Delta_{0}\subseteq\Delta^{*}$; in
particular $W_{\Sigma}\ne\emptyset$ and $\mathcal M_{\Sigma}$ is a model.  By
Lemma~\ref{lem:truth}, $(\Gamma^{*},\Delta^{*})\forces A$ for every
$A\in\Gamma_{0}$ and $(\Gamma^{*},\Delta^{*})\nforces B$ for every
$B\in\Delta_{0}$.  Hence $\Gamma_{0}\seq\Delta_{0}$ is not valid.  The converse
implication is Corollary~\ref{cor:soundg3}.
\end{proof}

\begin{cor}[Completeness of $\bti$]\label{cor:completebt}
A block $\Pi$ is refutable in $\bti$ if and only if it is not realizable; and if
it is realizable, it is realized in a finite model with at most
$4^{|\Sub(\Pi)|}$ worlds.
\end{cor}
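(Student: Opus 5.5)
The plan is to combine three earlier results: Theorem~\ref{thm:corr}, Theorem~\ref{thm:sound} and Theorem~\ref{thm:complete}. Given a block $\Pi$, we work with the sequent $\Gamma_{\Pi}\seq\Delta_{\Pi}$ of Convention~\ref{con:blockseq}. The one observation used throughout is the one recorded after Definition~\ref{def:real}: $\Pi$ is realized at $w$ exactly when every formula of $\Gamma_{\Pi}$ is forced at $w$ and no formula of $\Delta_{\Pi}$ is. So realizability of $\Pi$ coincides with non-validity of $\Gamma_{\Pi}\seq\Delta_{\Pi}$. This holds because $\Pi=\sT[\Gamma_{\Pi}]\cup\sF[\Delta_{\Pi}]$, and validity depends only on the underlying sets, not on multiplicities.

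The left-to-right direction of the equivalence is Theorem~\ref{thm:sound} verbatim: a refutable block is not realizable.

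For the right-to-left direction I argue contrapositively. Suppose $\Pi$ is not refutable. By Theorem~\ref{thm:corr}, the sequent $\Gamma_{\Pi}\seq\Delta_{\Pi}$ is not derivable in $\Gti$. Apply Theorem~\ref{thm:complete} to it, with $\Sigma=\Sub(\Gamma_{\Pi}\cup\Delta_{\Pi})$. Definition~\ref{def:signedsub} makes this set equal to $\Sub(\Pi)$. The theorem yields a finite Kripke model with at most $4^{|\Sub(\Pi)|}$ worlds, together with a world forcing every formula of $\Gamma_{\Pi}$ and none of $\Delta_{\Pi}$. That world realizes $\Pi$, so $\Pi$ is realizable.

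This argument proves both parts of the corollary at once. Every realizable block is non-refutable by Theorem~\ref{thm:sound}, and the construction just described then realizes it in a model of the stated size.

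I expect no real obstacle here. The only point needing care is bookkeeping: checking that $\Sigma$ computed from the sequent is exactly $\Sub(\Pi)$, so that the bound $4^{|\Sigma|}$ of Theorem~\ref{thm:complete} becomes the bound $4^{|\Sub(\Pi)|}$ in the statement. A secondary check is that reading the finite sets $\Gamma_{\Pi},\Delta_{\Pi}$ as repetition-free multisets affects neither derivability nor validity. Theorem~\ref{thm:corr} is already stated in exactly these terms, so this is immediate.
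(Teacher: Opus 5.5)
Your proposal is correct and is essentially the paper's own proof. The paper likewise combines Theorems~\ref{thm:sound}, \ref{thm:corr} and \ref{thm:complete}, through the observation that $\Pi$ is realizable if and only if $\Gamma_{\Pi}\seq\Delta_{\Pi}$ is not valid; your explicit check that $\Sub(\Gamma_{\Pi}\cup\Delta_{\Pi})=\Sub(\Pi)$, which transfers the bound $4^{|\Sigma|}$, is the bookkeeping the paper leaves implicit.
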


\begin{proof}
Combine Theorems~\ref{thm:sound}, \ref{thm:corr} and \ref{thm:complete},
recalling that $\Pi$ is realizable if and only if
$\Gamma_{\Pi}\seq\Delta_{\Pi}$ is not valid.
\end{proof}

\begin{cor}[Decidability]\label{cor:dec}
Refutability in $\bti$, and hence derivability in $\Gti$ and provability in
intuitionistic propositional logic, is decidable.
\end{cor}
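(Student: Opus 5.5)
The decision procedure I would give rests on Corollary~\ref{cor:completebt} together with the finite model property, and does not depend on any particular strategy for growing tableaux. Given a block $\Pi$, put $\Sigma=\Sub(\Pi)$. By Corollary~\ref{cor:completebt}, $\Pi$ is refutable exactly when it is not realizable, and when it is realizable it is already realized in a model with at most $4^{|\Sigma|}$ worlds.

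The procedure enumerates all Kripke models up to isomorphism whose set of worlds has cardinality at most $4^{|\Sigma|}$. In each one, $V$ is restricted to the finitely many variables occurring in $\Sigma$, since forcing of formulae in $\Sigma$ depends only on those. For every model and every world $w$ it checks whether $\Pi$ is realized at $w$.

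There are finitely many such models, because a finite carrier admits only finitely many preorders and finitely many monotone valuations into $\mathcal P(\Sigma\cap\mathit{Var})$. Forcing of a given formula at a given world of a finite model is computed by recursion on the formula, following Definition~\ref{def:kripke}; the clause for $\arr$ quantifies only over the finitely many $v\ge w$. The procedure answers ``refutable'' exactly when no realizing world is found. Correctness in both directions is the first sentence of Corollary~\ref{cor:completebt} combined with its bound.

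An alternative, more syntactic route goes through Proposition~\ref{prop:subf}. Every block occurring in a tableau for $\Pi$ lies in the finite set $\mathcal P(\Subpm(\Pi))$. Refutability can then be computed as a least fixed point over this finite set: a block is refutable iff it is closed or some rule instance has all children refutable. The iteration stabilizes after at most $2^{|\Subpm(\Pi)|}$ rounds. I would mention this only as a remark, since its correctness requires checking that derivations may be taken without repeated blocks along a branch.

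The statement then transfers in two steps. Derivability in $\Gti$ of $\Gamma\seq\Delta$ is decided by deciding refutability of $\Pi(\Gamma\seq\Delta)$, using Theorem~\ref{thm:corr}. Provability of $A$ in intuitionistic propositional logic is identified with refutability of $\{\sF A\}$, equivalently validity of $\seq A$, by Theorem~\ref{thm:complete}.

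The only real obstacle is the uniform finiteness of the search. It requires that the bound on the number of worlds be computable from $\Pi$, and that the relevant valuations range over finitely many variables. Both are immediate here, so the proof is short.
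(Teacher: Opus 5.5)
Your proof is correct and is essentially the paper's own: enumerate the finitely many models with at most $4^{|\Sub(\Pi)|}$ worlds and valuations restricted to the variables of $\Sub(\Pi)$, and check realizability of $\Pi$ in each, with correctness supplied by soundness and Corollary~\ref{cor:completebt}. Your side remark on the least fixed point over $\mathcal P(\Subpm(\Pi))$ is also sound, and it needs no caveat about repeated blocks, since that least fixed point is exactly the inductive definition of having a finite closed tableau.
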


\begin{proof}
Given $\Pi$, put $\Sigma=\Sub(\Pi)$ and let $k=4^{|\Sigma|}$.  By
Theorem~\ref{thm:sound} refutability implies non-realizability, and by
Corollary~\ref{cor:completebt} non-refutability implies realizability in a model
with at most $k$ worlds and with $V$ restricted to the finitely many variables
occurring in $\Sigma$.  There are finitely many such models up to isomorphism,
and realizability in a given finite model is decidable by evaluating the
finitely many forcing conditions.  Enumerating them therefore decides
refutability.
\end{proof}

The bound is of course extravagant; Section~\ref{sec:term} replaces this
enumeration by a terminating refutation search.

\begin{cor}[Admissibility of cut in $\Gti$]\label{cor:semcut}
The rule
\[
\frac{\Gamma\seq\Delta,A\qquad A,\Gamma'\seq\Delta'}
     {\Gamma,\Gamma'\seq\Delta,\Delta'}\ \mathrm{Cut}
\]
is admissible in $\Gti$.
\end{cor}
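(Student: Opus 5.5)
The plan is to argue semantically, through soundness and completeness, and not to permute cuts at all, since Remark~\ref{rem:cutobstruction} shows that the naive permutation past $\arr\mathrm R$ fails. Suppose $\Gti\vdash\Gamma\seq\Delta,A$ and $\Gti\vdash A,\Gamma'\seq\Delta'$. By Corollary~\ref{cor:soundg3} both sequents are valid. It then suffices to show that $\Gamma,\Gamma'\seq\Delta,\Delta'$ is valid, because Theorem~\ref{thm:complete} converts validity back into derivability in $\Gti$.

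For the validity step, take an arbitrary model $\mathcal M$ and a world $w$ with $w\forces C$ for every $C\in\Gamma\cup\Gamma'$. Since $w$ forces all of $\Gamma$, validity of the first sequent gives a $B\in\Delta\cup\{A\}$ with $w\forces B$. If $B\in\Delta$ we are done. Otherwise $w\forces A$, so $w$ forces $A$ together with all of $\Gamma'$. Validity of the second sequent then yields some formula of $\Delta'$ forced at $w$. In either case some formula of $\Delta\cup\Delta'$ is forced at $w$, so the conclusion is valid.

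Two bookkeeping points remain, and neither should cause trouble. First, sequents are built from multisets and the conclusion may contain repetitions. Validity is insensitive to multiplicity, and Theorem~\ref{thm:complete} is stated for arbitrary finite multisets. Its proof goes through saturated pairs of sets, but an underivable multiset sequent stays underivable after contraction by Lemma~\ref{lem:g3ctr}. So if needed I will pass to the repetition-free version and restore copies with Lemma~\ref{lem:g3wk}. Second, the argument uses only the monotonicity-free clauses of forcing at the single world $w$, so no appeal to Lemma~\ref{lem:mono} is needed.

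The only real content is that completeness was proved without using cut. I will check this explicitly: Lemma~\ref{lem:sat} and Lemma~\ref{lem:truth} rely solely on Lemmas~\ref{lem:g3wk}, \ref{lem:g3inv} and \ref{lem:g3ctr} and the primitive rules, so the argument is not circular. The proof is therefore cut admissibility by transfer, with the weight carried entirely by Theorems~\ref{thm:sound} (through Corollary~\ref{cor:soundg3}) and~\ref{thm:complete}.
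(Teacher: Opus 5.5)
Your proposal is correct and follows the paper's proof essentially verbatim: soundness via Corollary~\ref{cor:soundg3}, a case split at a single world $w$ on whether $w$ forces some formula of $\Delta$ or forces $A$, then completeness via Theorem~\ref{thm:complete}. One small slip in your multiset bookkeeping: an underivable sequent with repetitions has an underivable repetition-free version by weakening (Lemma~\ref{lem:g3wk}), not by contraction (Lemma~\ref{lem:g3ctr}, which gives the converse), but your actual route of applying completeness to the repetition-free sequent and then weakening is fine.
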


\begin{proof}
Assume both premisses derivable, hence valid by Corollary~\ref{cor:soundg3}.
Let $w$ force every formula of $\Gamma,\Gamma'$.  By the validity of the first
premiss, either $w$ forces some formula of $\Delta$, and we are done, or
$w\forces A$; in the latter case $w$ forces every formula of $A,\Gamma'$, so by
the validity of the second premiss $w$ forces some formula of $\Delta'$.  Hence
$\Gamma,\Gamma'\seq\Delta,\Delta'$ is valid, and by
Theorem~\ref{thm:complete} it is derivable.
\end{proof}

\subsection{The disjunction property}

The disjunction property is not available in the classical calculus, and it is
worth obtaining it here by an argument internal to the block presentation,
where it turns on a single structural fact: a block whose $\sT$-part is empty
closes only because one of its members closes on its own.

\begin{lem}[Blocks without $\sT$-formulae]\label{lem:allF}
Let $\Pi$ be a block with $\Ptt=\emptyset$.  If $\Pi$ is refutable in $\bti$,
then $\{\varphi\}$ is refutable for some $\varphi\in\Pi$.
\end{lem}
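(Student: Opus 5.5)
We argue by induction on the height $n$ of a closed tableau for $\Pi$, proving the statement for all blocks $\Pi$ with $\Ptt=\emptyset$ simultaneously.

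\emph{Base case.} If $n=0$, then $\Pi$ is closed. Closure requires $\sT p\in\Pi$ or $\sT\bot\in\Pi$, and neither is possible when $\Ptt=\emptyset$. So the base case is vacuous, and every refutation of such a $\Pi$ begins with a rule application.

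\emph{Inductive step.} The principal formula is $\sF$-signed, because there are no $\sT$-formulae. So the root rule is one of $\sF\!\lor$, $\sF\!\land$, $\sF\!\arr$, with principal formula $\varphi$.

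\begin{itemize}
\item \emph{The case $\sF\!\arr$.} This is the easy case, and it is where the purge does all the work. The child is $\Ptt\cup\{\sT A,\sF B\}=\{\sT A,\sF B\}$. Exactly the same child arises from the singleton $\{\varphi\}$ by the same rule. The closed subtree therefore gives a closed tableau for $\{\varphi\}$.
\item \emph{The case $\sF\!\lor$.} The child is $\Pi_0\cup\{\sF A,\sF B\}$, where $\Pi_0=\Pi\setminus\{\varphi\}$. It still has empty $\sT$-part and a lower-height refutation. By the induction hypothesis some $\psi$ in the child has $\{\psi\}$ refutable.
  \begin{itemize}
  \item If $\psi\in\Pi_0$, we are done.
  \item If $\psi$ is $\sF A$ or $\sF B$, then Lemma~\ref{lem:weak} makes $\{\sF A,\sF B\}$ refutable. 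Prefixing one application of $\sF\!\lor$ then refutes $\{\varphi\}$.
  \end{itemize}
\item \emph{The case $\sF\!\land$.} This is the main point needing care. The two children $\Pi_0\cup\{\sF A\}$ and $\Pi_0\cup\{\sF B\}$ each have empty $\sT$-part. By the induction hypothesis each yields a refutable singleton, $\psi_1$ and $\psi_2$ respectively.
  \begin{itemize}
  \item If either lies in $\Pi_0$, we are done.
  \item Otherwise $\psi_1=\sF A$ and $\psi_2=\sF B$, and one application of $\sF\!\land$ to $\{\varphi\}$ produces the two refutable children $\{\sF A\}$ and $\{\sF B\}$.
  \end{itemize}
  A small subtlety applies when $\sF A$ already belonged to $\Pi_0$: then $\psi_1=\sF A\in\Pi_0$ and we are also done. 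So the case split on membership in $\Pi_0$ versus the new formula is exhaustive.
\end{itemize}

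No step seems a real obstacle. The only delicate points are the vacuous base case and the observation that $\sF\!\arr$ already discards the rest of $\Pi$.
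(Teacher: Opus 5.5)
Your proof is correct and follows the paper's argument essentially step for step. It uses induction on the height of the closed tableau with a vacuous base case, and its three cases match the paper's: $\sF\!\lor$ (closed off via Lemma~\ref{lem:weak}), $\sF\!\land$ (split on whether the witnesses lie in $\Pi\setminus\{\varphi\}$), and $\sF\!\arr$ (where the purge makes the child coincide with that of $\{\varphi\}$).
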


\begin{proof}
By induction on the height $n$ of a closed tableau for $\Pi$.  The case $n=0$
does not arise: closure requires either a pair $\sT p,\sF p$ or the formula
$\sT\bot$, and neither is available in a block with empty $\sT$-part.  So
$n\ge1$, and the root applies a rule $R$ whose principal formula $\varphi_{0}$
is, again for want of $\sT$-formulae, one of $\sF(A\land B)$, $\sF(A\lor B)$,
$\sF(A\arr B)$.

\emph{$R=\sF\!\lor$, $\varphi_{0}=\sF(A\lor B)$.}  The child
$\Pi_{1}=(\Pi\setminus\{\varphi_{0}\})\cup\{\sF A,\sF B\}$ has empty
$\sT$-part and is refutable at height at most $n-1$, so by the induction
hypothesis $\{\psi\}$ is refutable for some $\psi\in\Pi_{1}$.  If
$\psi\in\Pi\setminus\{\varphi_{0}\}$ we are done.  Otherwise $\psi$ is $\sF A$
or $\sF B$; applying $\sF\!\lor$ to $\{\varphi_{0}\}$ produces the single child
$\{\sF A,\sF B\}$, which is refutable by Lemma~\ref{lem:weak}, so
$\{\varphi_{0}\}$ is refutable.

\emph{$R=\sF\!\land$, $\varphi_{0}=\sF(A\land B)$.}  The two children
$(\Pi\setminus\{\varphi_{0}\})\cup\{\sF A\}$ and
$(\Pi\setminus\{\varphi_{0}\})\cup\{\sF B\}$ have empty $\sT$-part and are
refutable at height at most $n-1$.  By the induction hypothesis each contains a
refutable singleton.  If either of the two witnesses lies in
$\Pi\setminus\{\varphi_{0}\}$ we are done.  Otherwise the witnesses are
$\sF A$ and $\sF B$ respectively, and applying $\sF\!\land$ to
$\{\varphi_{0}\}$ produces the children $\{\sF A\}$ and $\{\sF B\}$, both
refutable; hence $\{\varphi_{0}\}$ is refutable.

\emph{$R=\sF\!\arr$, $\varphi_{0}=\sF(A\arr B)$.}  The child is
$\Ptt\cup\{\sT A,\sF B\}=\{\sT A,\sF B\}$, refutable at height at most $n-1$.
The same rule applied to $\{\varphi_{0}\}$ produces exactly the same child, so
$\{\varphi_{0}\}$ is refutable.  Here the purge does the work: the remaining
$\sF$-formulae of $\Pi$ play no part whatsoever in the refutation.
\end{proof}

\begin{thm}[Disjunction property]\label{thm:dp}
If $A\lor B$ is provable in $\bti$, then $A$ is provable or $B$ is provable.
\end{thm}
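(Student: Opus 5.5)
The plan is to reduce the statement to Lemma~\ref{lem:allF} by looking at the first step of a closed tableau for the root block. Suppose $A\lor B$ is provable, so the block $\{\sF(A\lor B)\}$ has a closed tableau. This block is not closed, since it contains no $\sT$-formula. Hence the tableau has height at least one.

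The only signed formula present is $\sF(A\lor B)$, so the root step must apply $\sF\!\lor$. Its single child is $\{\sF A,\sF B\}$, and this child has a closed tableau.

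The child $\{\sF A,\sF B\}$ has empty $\sT$-part. Lemma~\ref{lem:allF} therefore applies and yields a $\varphi\in\{\sF A,\sF B\}$ such that $\{\varphi\}$ is refutable. If $\varphi=\sF A$, then $A$ is provable by Definition~\ref{def:rules}; if $\varphi=\sF B$, then $B$ is provable. This proves the theorem.

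There is no real obstacle, because all the substantive work is already in Lemma~\ref{lem:allF}: the purge performed by $\sF\!\arr$ isolates one $\sF$-formula. Two routine points need checking. First, the root step is forced: no other rule has a principal formula in $\{\sF(A\lor B)\}$, and the block does not close at height $0$. Second, the degenerate case $A=B$ causes no trouble, because then the child is the singleton $\{\sF A\}$ and the conclusion is immediate.
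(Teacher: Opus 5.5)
Your proof is correct and is essentially the paper's own argument: the root block $\{\sF(A\lor B)\}$ is not closed, so the first step is forced to be $\sF\!\lor$, and Lemma~\ref{lem:allF} applied to the child $\{\sF A,\sF B\}$ finishes the proof. Your check of the degenerate case $A=B$ is a harmless addition that the paper leaves implicit.
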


\begin{proof}
Let $\{\sF(A\lor B)\}$ be refutable.  It is not closed, so a closed tableau for
it applies a rule at the root; the only signed formula available is
$\sF(A\lor B)$, so the rule is $\sF\!\lor$ and the child $\{\sF A,\sF B\}$ is
refutable.  This block has empty $\sT$-part, so by Lemma~\ref{lem:allF} either
$\{\sF A\}$ or $\{\sF B\}$ is refutable.
\end{proof}

\subsection{From multiple to single succedents}

The bridge to natural deduction runs through a single-succedent calculus, and
we record here the equivalence that licenses the passage.  We write $\Gip$ for
the single-succedent G3-style calculus for intuitionistic propositional logic
of \cite[\S3.5]{TroelstraSchwichtenberg2000} and \cite[Ch.~5]{NegriVonPlato2001}:
its sequents are $\Gamma\seq C$ with $C$ a formula, its initial sequents are
$p,\Gamma\seq p$ and $\bot,\Gamma\seq C$, and its rules are
\[
\frac{A,B,\Gamma\seq C}{A\land B,\Gamma\seq C}
\quad
\frac{A,\Gamma\seq C\ \ B,\Gamma\seq C}{A\lor B,\Gamma\seq C}
\quad
\frac{A\arr B,\Gamma\seq A\ \ B,\Gamma\seq C}{A\arr B,\Gamma\seq C}
\]
\[
\frac{\Gamma\seq A\ \ \Gamma\seq B}{\Gamma\seq A\land B}
\quad
\frac{\Gamma\seq A}{\Gamma\seq A\lor B}
\quad
\frac{\Gamma\seq B}{\Gamma\seq A\lor B}
\quad
\frac{A,\Gamma\seq B}{\Gamma\seq A\arr B}
\]
For $\Gip$ weakening and contraction are hp-admissible, the rules other than
$\arr\mathrm R$ and $\lor\mathrm R_{i}$ are invertible, cut is admissible, and
the calculus is sound and complete for Kripke semantics; all of this is
standard and we shall use it without further comment
\cite[3.5.5--3.5.11, 4.1.9]{TroelstraSchwichtenberg2000},
\cite[Ch.~5]{NegriVonPlato2001}, \cite{DyckhoffNegri2000}.

\begin{prop}[Collapse]\label{prop:collapse}
Write $\bigvee\Delta$ for the disjunction of the formulae of $\Delta$ in a fixed
association, with $\bigvee\emptyset=\bot$.  For all finite multisets
$\Gamma,\Delta$,
\[
\Gti\vdash\Gamma\seq\Delta
\qquad\text{if and only if}\qquad
\Gip\vdash\Gamma\seq\textstyle\bigvee\Delta .
\]
In particular the two calculi prove the same sequents with a single formula in
the succedent, and $\Gti$ is a conservative extension of $\Gip$.
\end{prop}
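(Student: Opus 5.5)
The easiest route is semantic. Theorem~\ref{thm:complete} gives completeness of $\Gti$ for Kripke models, and $\Gip$ is sound and complete for the same semantics by the cited standard results. So both sides of the equivalence reduce to validity. It then suffices to observe that, at any world $w$ of any model, $w\forces\bigvee\Delta$ if and only if $w\forces B$ for some $B\in\Delta$. This holds by a trivial induction on the fixed association of the disjunction, using the forcing clause for $\lor$ and the clause $w\nforces\bot$ when $\Delta=\emptyset$. Hence $\Gamma\seq\Delta$ is valid exactly when $\Gamma\seq\bigvee\Delta$ is valid, and the equivalence follows by chaining: $\Gti\vdash\Gamma\seq\Delta$ iff $\Gamma\seq\Delta$ is valid (Theorem~\ref{thm:complete}) iff $\Gamma\seq\bigvee\Delta$ is valid iff $\Gip\vdash\Gamma\seq\bigvee\Delta$.

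For the final sentence, take $\Delta=\{C\}$, so that $\bigvee\Delta=C$. Then $\Gti$ and $\Gip$ derive the same single-succedent sequents. Conservativity is just the left-to-right reading of this special case.

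I would also sketch a syntactic alternative, to keep the bridge to Section~\ref{sec:nd} independent of semantics. From $\Gip$ to $\Gti$: a $\Gip$ derivation of $\Gamma\seq C$ is read as a $\Gti$ derivation, since each $\Gip$ rule is an instance of the corresponding $\Gti$ rule with empty succedent context. The only mismatch is $\lor\mathrm R_i$, which is recovered from $\lor\mathrm R$ after $\wkR$ (Lemma~\ref{lem:g3wk}). Then one unpacks $\bigvee\Delta$ on the right by Lemma~\ref{lem:g3inv}(v), and erases a possible $\bot$ by Lemma~\ref{lem:botthin}.

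From $\Gti$ to $\Gip$ one proceeds by induction on the $\Gti$ derivation, proving $\Gamma\seq\bigvee\Delta$ in $\Gip$ and using cut and the derivability of the sequents $A,\Gamma\seq A$. This is where I expect the only real work. The rules that manipulate the succedent context, namely $\land\mathrm R$, $\lor\mathrm R$ and $\arr\mathrm L$, require distributing $\bigvee\Delta$ over the principal formula. For instance, $\land\mathrm R$ requires $\bigvee\Delta\lor A$ and $\bigvee\Delta\lor B$ to yield $\bigvee\Delta\lor(A\land B)$, and $\arr\mathrm L$ requires a case split on $\bigvee\Delta\lor A$ inside the antecedent. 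These are intuitionistic tautologies, so cut against their $\Gip$ derivations settles them, together with reassociation lemmas for $\bigvee$. The rule $\arr\mathrm R$ is easy, since its premiss already has a single succedent and one finishes with $\lor\mathrm R_i$.

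Since the semantic argument is complete and short, I would present it as the proof and mention the syntactic route only as a remark.
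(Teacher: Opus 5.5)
Your semantic argument is correct and is exactly the paper's proof: the forcing clause for disjunction makes $\Gamma\seq\Delta$ and $\Gamma\seq\bigvee\Delta$ valid together, and the soundness and completeness of $\Gti$ and of $\Gip$ turn this into equal derivability. One small slip in your syntactic aside: the $\Gip$ rule $\arr\mathrm L$ is not literally an instance of the $\Gti$ rule either, because the left premiss of the $\Gti$ rule carries the shared succedent context $C$, so you need $\wkR$ there as well as for $\lor\mathrm R_i$.
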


\begin{proof}
By the forcing clause for disjunction, a world forces some formula of $\Delta$
if and only if it forces $\bigvee\Delta$; hence $\Gamma\seq\Delta$ is valid if
and only if $\Gamma\seq\bigvee\Delta$ is valid.  The claim follows from the
soundness and completeness of $\Gti$ (Corollary~\ref{cor:soundg3} and
Theorem~\ref{thm:complete}) and of $\Gip$.
\end{proof}

\begin{cor}\label{cor:blockprov}
A formula $A$ is provable in $\bti$ if and only if $\Gip\vdash{}\seq A$, that
is, if and only if $A$ is a theorem of intuitionistic propositional logic.
\end{cor}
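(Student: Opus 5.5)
The plan is to chain three results already established, so the argument is a composition of equivalences rather than a new induction. By Definition~\ref{def:rules}, $A$ is provable in $\bti$ exactly when the block $\{\sF A\}$ is refutable. This block is $\Pi({}\seq A)=\sT[\emptyset]\cup\sF[\{A\}]$. Theorem~\ref{thm:corr} therefore gives that $\{\sF A\}$ is refutable if and only if $\Gti\vdash{}\seq A$; note that $\Gamma_{\Pi}=\emptyset$ and $\Delta_{\Pi}=\{A\}$ under Convention~\ref{con:blockseq}.

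Next I would apply Proposition~\ref{prop:collapse} with $\Gamma=\emptyset$ and $\Delta=\{A\}$. One small point needs checking here: with a single disjunct, $\bigvee\{A\}$ is literally $A$ under any fixed association, so no equivalence between $A$ and $A\lor\bot$ is needed. The proposition then yields $\Gti\vdash{}\seq A$ if and only if $\Gip\vdash{}\seq A$. Chaining the two equivalences gives the first biconditional of the statement.

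For the final clause, I would identify derivability of ${}\seq A$ in $\Gip$ with being a theorem of intuitionistic propositional logic. Here I would cite the standard soundness and completeness of $\Gip$ for Kripke semantics, recorded just before Proposition~\ref{prop:collapse}, or equivalently its equivalence with the usual Hilbert or natural deduction presentations \cite{TroelstraSchwichtenberg2000,NegriVonPlato2001}.

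There is no genuine obstacle. The only place requiring care is the phrase \enquote{theorem of intuitionistic propositional logic}, which must be pinned to a fixed reference system. If it is read as Kripke validity, then Corollary~\ref{cor:completebt} together with Theorem~\ref{thm:sound} gives the same conclusion directly, bypassing $\Gip$ altogether, and I would mention this as an alternative route.
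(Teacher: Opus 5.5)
Your proof is correct and matches the paper's own argument: Theorem~\ref{thm:corr} identifies refutability of $\{\sF A\}$ with $\Gti\vdash{}\seq A$, and Proposition~\ref{prop:collapse} with $\Delta=\{A\}$ (so that $\bigvee\Delta$ is just $A$) transfers this to $\Gip$. Your remark that ``theorem of intuitionistic propositional logic'' must be tied to a reference system is the right caveat; the paper leaves this implicit by treating $\Gip$ as the standard calculus. The alternative via Theorem~\ref{thm:sound} and Corollary~\ref{cor:completebt} is also valid when the phrase is read as Kripke validity.
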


\begin{proof}
By Theorem~\ref{thm:corr}, $\{\sF A\}$ is refutable if and only if
$\Gti\vdash{}\seq A$, and by Proposition~\ref{prop:collapse} this holds if and
only if $\Gip\vdash{}\seq A$.
\end{proof}

\section{A G0-style calculus and natural deduction}\label{sec:nd}

We now carry out, for the intuitionistic block calculus, the programme that
Kamide and Negri \cite{KamideNegri2025} carry out for the extended
Belnap--Dunn and intuitionistic logics and that was applied to the classical
block calculus in \cite{CuconatoKJM}: the G3-style calculus is transformed into
a G0-style one, with independent contexts, explicit structural rules and
generalized initial sequents, and the latter is put in correspondence with a
natural deduction system whose elimination rules are general in the sense of
von Plato \cite{vonPlato2001} and Schroeder-Heister
\cite{SchroederHeister1984}.  The technique of bidirectional translation between
a G0-style sequent calculus and general-elimination natural deduction is that of
\cite{NegriVonPlatoJSL2001}, and it goes back, for the relation between
normalization and cut elimination, to Gentzen's own remarks
\cite{Gentzen1935}.

\subsection{The calculus \texorpdfstring{$\Gzi$}{G0Ti}}

\begin{dfn}[The calculus $\Gzi$]\label{def:g0}
Sequents are of the form $\Gamma\seq C$ with $\Gamma$ a finite multiset and $C$
a formula.  The calculus $\Gzi$ has the generalized initial sequents
\[
A\seq A\ \ (\mathrm{id}^{s})
\qquad\qquad
\bot\seq C\ \ (\mathrm{L}\bot^{s})
\]
for arbitrary formulae $A$ and $C$, the structural rules
\[
\frac{\Gamma\seq C}{A,\Gamma\seq C}\ \wkL
\qquad\qquad
\frac{A,A,\Gamma\seq C}{A,\Gamma\seq C}\ \ctrL
\]
and the operational rules
\[
\frac{A,B,\Gamma\seq C}{A\land B,\Gamma\seq C}\ \land\mathrm L
\qquad
\frac{A,\Gamma\seq C\quad B,\Delta\seq C}{A\lor B,\Gamma,\Delta\seq C}\ \lor\mathrm L^{s}
\qquad
\frac{\Gamma\seq A\quad B,\Delta\seq C}{A\arr B,\Gamma,\Delta\seq C}\ \arr\mathrm L^{s}
\]
\[
\frac{\Gamma\seq A\quad\Delta\seq B}{\Gamma,\Delta\seq A\land B}\ \land\mathrm R^{s}
\qquad
\frac{\Gamma\seq A}{\Gamma\seq A\lor B}\ \lor\mathrm R_{1}
\qquad
\frac{\Gamma\seq B}{\Gamma\seq A\lor B}\ \lor\mathrm R_{2}
\qquad
\frac{A,\Gamma\seq B}{\Gamma\seq A\arr B}\ \arr\mathrm R
\]
\end{dfn}

Two differences from $\Gti$ are essential to what follows.  The contexts of the
two-premiss rules are independent, which is what makes them match the way the
general elimination rules of natural deduction combine their subderivations;
and $\arr\mathrm L^{s}$ does not repeat its principal formula, contraction
being now a primitive rule rather than an absorbed one.  A third difference is
invisible: since the succedent is a single formula, $\arr\mathrm R$ discards
nothing, so the purge of the block calculus leaves no trace at this level.  We
return to the point in Section~\ref{sec:comp}.

\begin{thm}[Equivalence of $\Gzi$ and $\Gip$]\label{thm:g0equiv}
For every sequent $\Gamma\seq C$,
\[
\Gzi\vdash\Gamma\seq C
\qquad\text{if and only if}\qquad
\Gip\vdash\Gamma\seq C .
\]
\end{thm}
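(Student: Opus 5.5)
Both directions go by induction on the height of the given derivation, translating rule by rule, with the structural properties of $\Gip$ quoted in the previous section filling the gaps between the two formats.

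\emph{From $\Gzi$ to $\Gip$.}  We induct on the height of a $\Gzi$-derivation of $\Gamma\seq C$.

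The initial sequent $(\mathrm{id}^{s})$ $A\seq A$ is derivable in $\Gip$ by the single-succedent analogue of Lemma~\ref{lem:g3id}, proved by the same induction on $A$.  The sequent $(\mathrm{L}\bot^{s})$ is already initial in $\Gip$.

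The structural rules $\wkL$ and $\ctrL$ are admissible in $\Gip$.  The one-premiss rules $\land\mathrm L$, $\lor\mathrm R_{i}$ and $\arr\mathrm R$ are literally rules of $\Gip$.

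For the two-premiss rules with split contexts, $\lor\mathrm L^{s}$ and $\land\mathrm R^{s}$, we first weaken each premiss to the union context $\Gamma,\Delta$.  The shared-context rule of $\Gip$ then yields the conclusion.

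For $\arr\mathrm L^{s}$ we weaken the premisses to $A\arr B,\Gamma,\Delta\seq A$ and $B,A\arr B,\Gamma,\Delta\seq C$.  The $\Gip$ rule $\arr\mathrm L$, which repeats the principal formula in its left premiss, then concludes $A\arr B,\Gamma,\Delta\seq C$.  The extra copy of $A\arr B$ in the right premiss is harmless, since $\arr\mathrm L$ carries $A\arr B$ in its context anyway.

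\emph{From $\Gip$ to $\Gzi$.}  We induct again on height.

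For the initial sequent $p,\Gamma\seq p$ we start from $p\seq p$ and apply repeated $\wkL$.  For $\bot,\Gamma\seq C$ we start from $\bot\seq C$ and weaken likewise.

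$\land\mathrm L$, $\lor\mathrm R_{i}$ and $\arr\mathrm R$ are common to both calculi.  For $\lor\mathrm L$ and $\land\mathrm R$ with shared context $\Gamma$, we apply the split rule with $\Delta:=\Gamma$, obtaining $\Gamma,\Gamma$ in the conclusion.  Repeated $\ctrL$ then reduces this to $\Gamma$.

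For $\arr\mathrm L$ the translated premisses are $A\arr B,\Gamma\seq A$ and $B,\Gamma\seq C$.  Applying $\arr\mathrm L^{s}$ yields $A\arr B,A\arr B,\Gamma,\Gamma\seq C$, and contractions on $A\arr B$ and on $\Gamma$ give $A\arr B,\Gamma\seq C$.  This is exactly where $\Gzi$'s primitive contraction replaces the repetition built into $\Gip$'s rule.

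\emph{Main obstacle.}  Nothing here is deep.  The only point needing care is the first direction's reliance on admissibility of weakening and contraction in $\Gip$, which the paper has taken as standard, together with the generalized-identity lemma for $\Gip$.  I would state that lemma explicitly, with the same proof as Lemma~\ref{lem:g3id}.  Note that cut is never used in either direction, so the equivalence does not depend on cut elimination for $\Gzi$.
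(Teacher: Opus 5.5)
Your proof is essentially the paper's: both directions go rule by rule, using generalized identity and admissible weakening and contraction in $\Gip$ in one direction, and split-context rules followed by $\ctrL$ (on $A\arr B$ and on $\Gamma$ in the $\arr\mathrm L$ case) in the other. One small slip in the $\arr\mathrm L^{s}$ case: the right premiss of $\Gip$'s $\arr\mathrm L$ does not contain $A\arr B$, so weaken $B,\Delta\seq C$ only to $B,\Gamma,\Delta\seq C$, as the paper does (or else add a second copy of $A\arr B$ to the left premiss too and contract the conclusion).
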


\begin{proof}
\emph{From $\Gip$ to $\Gzi$.}  Each initial sequent and rule of $\Gip$ is
derivable in $\Gzi$.  The initial sequent $p,\Gamma\seq p$ follows from
$(\mathrm{id}^{s})$ by $\wkL$, and $\bot,\Gamma\seq C$ from
$(\mathrm{L}\bot^{s})$ by $\wkL$.  The rules $\land\mathrm L$,
$\lor\mathrm R_{i}$ and $\arr\mathrm R$ are shared verbatim.  For the
context-sharing two-premiss rules one applies the corresponding
independent-context rule and contracts the duplicated context: from
$A,\Gamma\seq C$ and $B,\Gamma\seq C$, the rule $\lor\mathrm L^{s}$ gives
$A\lor B,\Gamma,\Gamma\seq C$ and repeated $\ctrL$ gives $A\lor B,\Gamma\seq C$;
from $\Gamma\seq A$ and $\Gamma\seq B$, the rule $\land\mathrm R^{s}$ and
$\ctrL$ give $\Gamma\seq A\land B$.  For $\arr\mathrm L$ of $\Gip$, whose
premisses are $A\arr B,\Gamma\seq A$ and $B,\Gamma\seq C$, one applies
$\arr\mathrm L^{s}$ to obtain $A\arr B,A\arr B,\Gamma,\Gamma\seq C$ and then
contracts, both on $A\arr B$ and on the formulae of $\Gamma$.

\emph{From $\Gzi$ to $\Gip$.}  Each initial sequent and rule of $\Gzi$ is
derivable or admissible in $\Gip$.  The generalized initial sequents are
derivable in $\Gip$ by the standard induction on the number of connectives
\cite[3.5.4]{TroelstraSchwichtenberg2000}, and $\bot\seq C$ is initial.  The
rules $\wkL$ and $\ctrL$ are hp-admissible in $\Gip$
\cite[3.5.5, 3.5.9]{TroelstraSchwichtenberg2000}.  The independent-context
rules are obtained from the context-sharing ones by prior weakening: from
$A,\Gamma\seq C$ and $B,\Delta\seq C$, weakening yields $A,\Gamma,\Delta\seq C$
and $B,\Gamma,\Delta\seq C$, and $\lor\mathrm L$ gives
$A\lor B,\Gamma,\Delta\seq C$; from $\Gamma\seq A$ and $B,\Delta\seq C$,
weakening yields $A\arr B,\Gamma,\Delta\seq A$ and $B,\Gamma,\Delta\seq C$, and
$\arr\mathrm L$ gives $A\arr B,\Gamma,\Delta\seq C$; the case of
$\land\mathrm R^{s}$ is the same.  The one-premiss rules are shared.
\end{proof}

\begin{thm}[Cut elimination for $\Gzi$]\label{thm:g0cut}
The rule
\[
\frac{\Gamma\seq A\qquad A,\Delta\seq C}{\Gamma,\Delta\seq C}\ \mathrm{Cut}
\]
is admissible in $\Gzi$.
\end{thm}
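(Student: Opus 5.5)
The plan is to obtain cut elimination for $\Gzi$ by transfer rather than by a direct reduction argument, as the phrase ``by transfer'' in the plan of the section suggests. The route is: translate the two premisses of Cut into $\Gip$, apply the admissibility of cut in $\Gip$, and translate the conclusion back. Everything needed is already in place: Theorem~\ref{thm:g0equiv} gives the equivalence of $\Gzi$ and $\Gip$ sequent by sequent, and the admissibility of cut in $\Gip$ is among the standard facts we agreed to use without further comment \cite[4.1.9]{TroelstraSchwichtenberg2000}.

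Concretely, assume $\Gzi\vdash\Gamma\seq A$ and $\Gzi\vdash A,\Delta\seq C$.
\begin{enumerate}
\item By the direction ``from $\Gzi$ to $\Gip$'' of Theorem~\ref{thm:g0equiv}, both $\Gip\vdash\Gamma\seq A$ and $\Gip\vdash A,\Delta\seq C$.
\item The context-sharing cut of $\Gip$ requires equal contexts. So first weaken, using hp-admissibility of $\wkL$ in $\Gip$, to $\Gamma,\Delta\seq A$ and $A,\Gamma,\Delta\seq C$.
\item Cut in $\Gip$ then gives $\Gip\vdash\Gamma,\Delta\seq C$.
\item The converse direction of Theorem~\ref{thm:g0equiv} returns $\Gzi\vdash\Gamma,\Delta\seq C$.
\end{enumerate}

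As a fallback, independent of the cited cut theorem, one can argue semantically as in Corollary~\ref{cor:semcut}. Validity of single-succedent sequents is closed under cut, and $\Gip$ is sound and complete for Kripke semantics. Alternatively, go through Proposition~\ref{prop:collapse} to $\Gti$ and use Corollary~\ref{cor:semcut} there, with $\bigvee\{A\}=A$ up to the equivalence $A\lor\bot$ versus $A$ fixed by the chosen association (with a singleton, $\bigvee\{A\}$ is just $A$).

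The only point needing care is step 2, the context mismatch between the independent-context cut of $\Gzi$ and the shared-context formulation used in $\Gip$. Weakening handles it, so I expect no real obstacle. The proof is essentially a bookkeeping transfer; a genuinely syntactic proof, with reductions on derivations in $\Gzi$ itself, is not needed for the statement. If wanted, it could be supplied later through the normalization theorem for $\Ngi$.
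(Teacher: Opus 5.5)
Your proof is the paper's proof: transfer both premisses to $\Gip$ by Theorem~\ref{thm:g0equiv}, apply the admissibility of cut there, and transfer the conclusion back; the weakening you insert to align the contexts is harmless bookkeeping. One caution about your closing aside: the normalization theorem for $\Ngi$ (Theorem~\ref{thm:norm}, and already Theorem~\ref{thm:sctond}) is itself derived from the present result, so it cannot supply a syntactic proof of it without circularity.
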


\begin{proof}
Let $\Gzi\vdash\Gamma\seq A$ and $\Gzi\vdash A,\Delta\seq C$.  By
Theorem~\ref{thm:g0equiv}, $\Gip\vdash\Gamma\seq A$ and
$\Gip\vdash A,\Delta\seq C$.  Cut is admissible in $\Gip$
\cite[4.1.9]{TroelstraSchwichtenberg2000}, \cite[Ch.~5]{NegriVonPlato2001}, so
$\Gip\vdash\Gamma,\Delta\seq C$, and Theorem~\ref{thm:g0equiv} again yields
$\Gzi\vdash\Gamma,\Delta\seq C$.
\end{proof}

The route is the one taken by Kamide and Negri \cite[Thm.~2.13]{KamideNegri2025}
and, for the classical block calculus, in \cite{CuconatoKJM}: rather than
eliminate cut in the G0-style calculus, where the independent contexts and the
explicit structural rules multiply the cases, one transfers the derivation to
the structural-rule-free calculus, eliminates there, and transfers back.  The
equivalence theorem is not an end in itself but the instrument of the
elimination.

\begin{rem}[Comparison with Ili\'c's $GI$]\label{rem:ilic}
The calculus $GI$ of \cite{Ilic2016} is the closest published relative of
$\Gzi$.  The two agree on the treatment of implication --- $GI$'s rule
$(\arr\mathrm l)$ is our $\arr\mathrm L^{s}$, with independent contexts and no
repetition of the principal formula, and $(\arr\mathrm r)$ is our
$\arr\mathrm R$ --- and both take the structural rules as primitive.  They
differ in two respects: $GI$ keeps $(\land\mathrm r)$ and $(\lor\mathrm l)$
context-sharing, so that its operational rules are of mixed type, and it
decomposes a conjunction on the left by the two \emph{special} rules
$\Gamma,\alpha\vdash\gamma/\Gamma,\alpha\land\beta\vdash\gamma$ and its twin,
where we use the single rule with both conjuncts in the premiss.  The first
difference is what leads Ili\'c to introduce assumption labels on the natural
deduction side; the second is what makes her elimination rule for conjunction
come in two forms where ours comes in one.  We take up the comparison in
Section~\ref{sec:comp}.
\end{rem}

\subsection{The natural deduction system \texorpdfstring{$\Ngi$}{NgTi}}

\begin{dfn}[The system $\Ngi$]\label{def:ng}
A derivation is a finite tree of formulae grown by the rules below; its topmost
formulae are assumptions, each open or discharged by an application of a rule
further down, and $[A]$ marks an assumption available for discharge.  For a
derivation $\mathcal D$ we write $\oa(\mathcal D)$ for the multiset of its open
assumptions and $\ef(\mathcal D)$ for its end-formula.  The rules are the
following.

\smallskip
\noindent\emph{Introduction rules.}
\[
\frac{A\quad B}{A\land B}\ \land\mathrm I
\qquad
\frac{A}{A\lor B}\ \lor\mathrm I_{1}
\qquad
\frac{B}{A\lor B}\ \lor\mathrm I_{2}
\qquad
\frac{\begin{array}{c}[A]\\ \vdots\\ B\end{array}}{A\arr B}\ \arr\mathrm I
\]

\smallskip
\noindent\emph{General elimination rules.}
\[
\frac{A\land B\quad\begin{array}{c}[A,B]\\ \vdots\\ C\end{array}}{C}\ \land\mathrm E
\qquad
\frac{A\lor B\quad\begin{array}{c}[A]\\ \vdots\\ C\end{array}
\quad\begin{array}{c}[B]\\ \vdots\\ C\end{array}}{C}\ \lor\mathrm E
\]
\[
\frac{A\arr B\quad\begin{array}{c}\vdots\\ A\end{array}
\quad\begin{array}{c}[B]\\ \vdots\\ C\end{array}}{C}\ \arr\mathrm E
\qquad\qquad
\frac{\bot}{C}\ \bot\mathrm E
\]
In each elimination the leftmost premiss is the \emph{major} premiss and the
derivations from bracketed assumptions are the \emph{minor} derivations; in
$\arr\mathrm E$ the derivation of $A$ is a minor derivation without discharge.
\end{dfn}

\begin{dfn}[Full normal form]\label{def:fnf}
A derivation in $\Ngi$ is in \emph{full normal form}, or \emph{fully normal},
if the major premiss of every application of an elimination rule --- including
the premiss $\bot$ of every application of $\bot\mathrm E$ --- is an
assumption, open or discharged.
\end{dfn}

The notion is von Plato's \cite{vonPlato2001,NegriVonPlato2001}, and it is the
general-elimination counterpart of Prawitz's demand that no major premiss of an
elimination be the conclusion of an introduction \cite{Prawitz1965}.  The point
of the general format is that each elimination matches, one for one, a left
rule of $\Gzi$: a left rule has its principal formula in the antecedent, hence,
on the natural deduction side, as an assumption, and this is exactly the major
premiss of the corresponding elimination.  The special eliminations are
recovered as the instances in which the conclusion $C$ is a component and the
minor derivation is the assumption itself.

\begin{dfn}[Subsumption]\label{def:reduct}
A multiset $\Theta$ is \emph{subsumed} by a multiset $\Gamma$, written
$\Theta\sqsubseteq\Gamma$, if every formula occurring in $\Theta$ occurs in
$\Gamma$; equivalently, if the underlying set of $\Theta$ is included in that of
$\Gamma$.
\end{dfn}

Subsumption, and not multiset inclusion, is the right relation between the
antecedent of a sequent and the open assumptions of the corresponding natural
deduction derivation, and the reason is the discipline of discharge.  A rule
that discharges an assumption discharges \emph{all} the occurrences of it that
the derivation carries, so multiplicities on the natural deduction side are not
under the control of the translation: multiple discharge lowers them and
vacuous discharge admits their absence, while the contraction rule of the
sequent calculus identifies two occurrences without any counterpart in the
tree.  Subsumption is closed under all four operations, and is thus preserved by
every case of Theorem~\ref{thm:sctond}.  It is the notion that the
\emph{multiset reduct} of \cite[Def.~3.7]{KamideNegri2025} plays the part of in
the corresponding classical development \cite{CuconatoKJM}, and it is weaker,
as it must be here because contraction is a primitive rule of $\Gzi$.

\subsection{The two translations}

\begin{thm}[Simulation of $\Ngi$ in $\Gzi$]\label{thm:ndtosc}
Let $\mathcal D$ be a derivation in $\Ngi$ with $\oa(\mathcal D)=\Gamma$ and
$\ef(\mathcal D)=C$.  Then $\Gzi\vdash\Gamma\seq C$.  If $\mathcal D$ is fully
normal, the simulating derivation uses no cut.
\end{thm}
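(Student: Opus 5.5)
We argue by induction on the construction of $\mathcal D$, following its last rule, and build a $\Gzi$-derivation of $\Gamma\seq C$ with $\Gamma=\oa(\mathcal D)$ as a multiset. Two bookkeeping conventions are fixed first. A discharge of $[A]$ may remove any number of occurrences of $A$, including none. On the sequent side we therefore adjust multiplicities before each discharging rule. We use $\wkL$ when the discharge is vacuous, so that the premiss antecedent contains at least one $A$. We use $\ctrL$ when it is multiple, so that it contains exactly one. Both rules are primitive in $\Gzi$ and neither is a cut, so these adjustments never affect the cut-free clause.

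\emph{Base and introductions.} If $\mathcal D$ is a single assumption $A$, the sequent is $(\mathrm{id}^{s})$. The introduction rules are matched by right rules. The rule $\land\mathrm I$ is matched by $\land\mathrm R^{s}$, whose independent contexts correspond exactly to the sum of the open assumptions of the two subderivations. The rules $\lor\mathrm I_{i}$ are matched by $\lor\mathrm R_{i}$. The rule $\arr\mathrm I$ is matched by $\arr\mathrm R$ after the normalization of the multiplicity of $A$ described above.

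\emph{Eliminations.} Let the major premiss $M$ be derived by $\mathcal D_{0}$ with open assumptions $\Gamma_{0}$. First suppose $M$ is an assumption. Then $\mathcal D_{0}$ is trivial, and we apply the corresponding left rule directly to the sequents of the minor derivations. Thus $\land\mathrm E$ goes to $\land\mathrm L$, $\lor\mathrm E$ to $\lor\mathrm L^{s}$, and $\arr\mathrm E$ to $\arr\mathrm L^{s}$, using the independent contexts of the minor derivations. The rule $\bot\mathrm E$ with assumption $\bot$ goes to $(\mathrm{L}\bot^{s})$. The principal formula produced in the antecedent is precisely the open occurrence of $M$. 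In the general case we first form the sequent $M,\Delta\seq C$ in this same way, and then cut it against $\Gamma_{0}\seq M$, obtained by the induction hypothesis, to get $\Gamma_{0},\Delta\seq C$. For $\bot\mathrm E$ the cut is against $\bot\seq C$.

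\emph{Cut-freeness.} Cut is used only when some major premiss is not an assumption. If $\mathcal D$ is fully normal this never happens. Every subderivation of a fully normal derivation is again fully normal, so the induction hypothesis yields cut-free simulations throughout. Hence the whole simulation is cut-free.

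The step needing most care is the multiplicity bookkeeping under discharge in the minor derivations of $\lor\mathrm E$ and $\arr\mathrm E$. There the same formula may be discharged while also occurring open elsewhere. We check that each adjustment affects only occurrences that are actually discharged, so that the final antecedent equals $\oa(\mathcal D)$ exactly. If this is not exact, we fall back on weakening and contraction to reach the antecedent up to subsumption and then restore it.
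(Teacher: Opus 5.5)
Your proposal is correct and follows the paper's proof essentially step for step. The shared outline is: induction on $\mathcal D$, with $\wkL$ and $\ctrL$ adjusting the multiplicity of each discharged formula; introductions go to the right rules of $\Gzi$; general eliminations go to the corresponding left rules followed by a cut on the major premiss (against $\bot\seq C$ for $\bot\mathrm E$); and that cut disappears whenever the major premiss is an assumption, as it always is in a fully normal derivation.

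The one step you leave implicit is that Cut is not a rule of $\Gzi$. So for a non-normal $\mathcal D$ your construction only gives derivability in $\Gzi$ plus Cut. You must conclude, as the paper does, by invoking the admissibility of cut in $\Gzi$ (Theorem~\ref{thm:g0cut}), which is proved independently of the present theorem.
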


\begin{proof}
By induction on the height of $\mathcal D$, with cases on its last rule.  An
assumption $A$, with $\oa=\{A\}$ and $\ef=A$, translates to
$(\mathrm{id}^{s})$.

\emph{Introduction rules.}  If the last rule is $\land\mathrm I$ with premisses
derivations of $A$ from $\Gamma$ and of $B$ from $\Delta$, the induction
hypothesis gives $\Gamma\seq A$ and $\Delta\seq B$, and $\land\mathrm R^{s}$
gives $\Gamma,\Delta\seq A\land B$.  The rules $\lor\mathrm I_{i}$ translate to
$\lor\mathrm R_{i}$.  If the last rule is $\arr\mathrm I$, discharging $k\ge0$
occurrences of $A$ in a derivation of $B$ whose open assumptions are
$A^{k},\Gamma$, the induction hypothesis gives $A^{k},\Gamma\seq B$; if $k\ge1$
we contract to $A,\Gamma\seq B$ by $k-1$ applications of $\ctrL$, and if $k=0$
we weaken to $A,\Gamma\seq B$ by $\wkL$; then $\arr\mathrm R$ gives
$\Gamma\seq A\arr B$.

\emph{General elimination rules.}  These translate to the corresponding left
rules, with a cut on the major premiss.  For $\land\mathrm E$, with major
derivation of $A\land B$ from $\Gamma$ and minor derivation of $C$ from
$[A,B]$ and $\Delta$, the induction hypothesis gives $\Gamma\seq A\land B$ and
$A^{j},B^{k},\Delta\seq C$; contracting and, where a component is discharged
vacuously, weakening, we obtain $A,B,\Delta\seq C$, whence $\land\mathrm L$
gives $A\land B,\Delta\seq C$, and a cut on $A\land B$ gives
$\Gamma,\Delta\seq C$.  For $\lor\mathrm E$ the induction hypothesis gives
$\Gamma\seq A\lor B$, $A^{j},\Delta\seq C$ and $B^{k},\Theta\seq C$, which the
same contractions and weakenings bring to $A,\Delta\seq C$ and
$B,\Theta\seq C$; then $\lor\mathrm L^{s}$ followed by a cut on $A\lor B$ gives
$\Gamma,\Delta,\Theta\seq C$.  For $\arr\mathrm E$, with major derivation of
$A\arr B$ from $\Gamma$, minor derivation of $A$ from $\Delta$ and minor
derivation of $C$ from $[B]$ and $\Theta$, the induction hypothesis gives
$\Gamma\seq A\arr B$, $\Delta\seq A$ and $B^{k},\Theta\seq C$, and, after the
same adjustment on the last, $\arr\mathrm L^{s}$ followed by a cut on
$A\arr B$ gives $\Gamma,\Delta,\Theta\seq C$.  For $\bot\mathrm E$, with premiss a derivation
of $\bot$ from $\Gamma$, the induction hypothesis gives $\Gamma\seq\bot$, and a
cut on $\bot$ against $(\mathrm{L}\bot^{s})$ gives $\Gamma\seq C$.

\emph{Fully normal derivations.}  If $\mathcal D$ is fully normal, the major
premiss of each elimination is an assumption, so the derivation of the major
premiss supplied by the induction hypothesis is the initial sequent
$D\seq D$ with $D$ the principal formula, or, in the case of $\bot\mathrm E$,
the sequent $\bot\seq\bot$.  A cut against $D\seq D$ is redundant: its
conclusion is the other premiss itself, so the left rule may be applied
directly to the antecedent occurrence of $D$, and in the case of
$\bot\mathrm E$ the sequent $\bot\seq C$ is $(\mathrm{L}\bot^{s})$.  Hence no
cut occurs.  In every case $\Gzi\vdash\Gamma\seq C$, cut being admissible by
Theorem~\ref{thm:g0cut}.
\end{proof}

\begin{thm}[Simulation of $\Gzi$ in $\Ngi$]\label{thm:sctond}
If $\Gzi\vdash\Gamma\seq C$, then there is a fully normal derivation
$\mathcal D$ in $\Ngi$ with $\oa(\mathcal D)\sqsubseteq\Gamma$ and
$\ef(\mathcal D)=C$.
\end{thm}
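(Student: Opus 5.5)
We argue by induction on the height of a derivation of $\Gamma\seq C$ in $\Gzi$, with cases on the last rule. The derivation is cut-free, since $\Gzi$ as defined in Definition~\ref{def:g0} has no cut rule; Theorem~\ref{thm:g0cut} is therefore not needed here. Throughout, the invariant is the weak one of the statement, $\oa(\mathcal D)\sqsubseteq\Gamma$, and not equality of multisets. This choice is what allows $\wkL$ and $\ctrL$ to be translated by doing nothing: if $\oa(\mathcal D)\sqsubseteq\Gamma$ then $\oa(\mathcal D)\sqsubseteq A,\Gamma$, and if $\oa(\mathcal D)\sqsubseteq A,A,\Gamma$ then $\oa(\mathcal D)\sqsubseteq A,\Gamma$, because subsumption compares underlying sets. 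The initial sequents translate as follows: $(\mathrm{id}^{s})$ becomes the one-formula derivation consisting of the assumption $A$, which is trivially fully normal, and $(\mathrm{L}\bot^{s})$ becomes $\bot\mathrm E$ applied to the assumption $\bot$, which is fully normal because its major premiss is an assumption.

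The right rules go to introductions. For $\land\mathrm R^{s}$ we join the two fully normal derivations by $\land\mathrm I$; the open assumptions form the union, which is subsumed by $\Gamma,\Delta$. The rules $\lor\mathrm R_{i}$ go to $\lor\mathrm I_{i}$. For $\arr\mathrm R$ we apply $\arr\mathrm I$ and discharge every open occurrence of $A$, possibly vacuously. What remains is subsumed by $\Gamma$: any formula of $\oa$ other than $A$ lies in $A,\Gamma$ and hence in $\Gamma$, and $A$ itself has been discharged. Introductions never create a major premiss, so full normality is preserved.

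The left rules go to general eliminations whose major premiss is a fresh open assumption of the principal formula. For $\land\mathrm L$, with IH derivation $\mathcal D_1$ of $C$ from assumptions subsumed by $A,B,\Gamma$, we write $\land\mathrm E$ with major premiss the assumption $A\land B$ and minor derivation $\mathcal D_1$, discharging all occurrences of $A$ and $B$. For $\lor\mathrm L^{s}$ we combine the two minor derivations under $\lor\mathrm E$ with major premiss the assumption $A\lor B$. For $\arr\mathrm L^{s}$ we take as major premiss the assumption $A\arr B$, as minor premiss without discharge the IH derivation of $A$ from assumptions subsumed by $\Gamma$, and as second minor the IH derivation of $C$, discharging $B$. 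In each case the new open assumptions are the principal formula together with the remaining open assumptions of the subderivations, and this multiset is subsumed by the conclusion's antecedent. Since each new major premiss is an assumption and the subderivations are fully normal by IH, the result is fully normal.

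The only delicate point is the bookkeeping of discharge. A general elimination discharges all occurrences of $A$ (or $B$) in its minor derivation, and there may be none. Moreover an occurrence of, say, $A$ that also belongs to $\Gamma$ in the sequent is now discharged and no longer open. Both effects only shrink $\oa$, so subsumption survives, which is exactly why the statement is phrased with $\sqsubseteq$. I expect this verification, rather than any structural difficulty, to be the main thing to write out carefully, especially in the $\arr\mathrm L^{s}$ case, where the minor premiss $A$ discharges nothing.
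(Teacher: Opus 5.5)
Your proposal is correct and follows the paper's proof essentially step for step. It uses induction on the height of the $\Gzi$ derivation, sends the initial sequents to an assumption or to $\bot\mathrm E$ applied to an assumption, lets subsumption absorb $\wkL$ and $\ctrL$, sends the left rules to general eliminations whose major premiss is the assumed principal formula, and sends the right rules to the corresponding introductions.

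Your remark that Theorem~\ref{thm:g0cut} is not needed is also accurate, since $\Gzi$ as defined has no cut rule; the paper's appeal to it at the start of the proof is only a precaution.
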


\begin{proof}
By Theorem~\ref{thm:g0cut} we may take the given derivation to be cut-free.  We
argue by induction on its height, with cases on the last rule.

\emph{Initial sequents.}  $A\seq A$ translates to the assumption $A$, a fully
normal derivation with $\oa=\{A\}$ and $\ef=A$.  The sequent $\bot\seq C$
translates to the application of $\bot\mathrm E$ to the assumption $\bot$;
since its major premiss is an assumption, the derivation is fully normal.

\emph{Structural rules.}  Both are translated by leaving the derivation
unchanged.  For $\wkL$, the induction hypothesis gives $\mathcal D$ with
$\oa(\mathcal D)\sqsubseteq\Gamma$, and $\Gamma$ and $A,\Gamma$ have the same
formulae but for $A$, so $\oa(\mathcal D)\sqsubseteq A,\Gamma$; the added
formula simply does not occur among the open assumptions.  For $\ctrL$, the
multisets $A,A,\Gamma$ and $A,\Gamma$ have the same underlying set, so
$\oa(\mathcal D)\sqsubseteq A,A,\Gamma$ gives at once
$\oa(\mathcal D)\sqsubseteq A,\Gamma$.  It is here that subsumption is used in
place of multiset inclusion, and here alone.

\emph{Left rules.}  If the last rule is $\land\mathrm L$, from $A,B,\Delta\seq C$
to $A\land B,\Delta\seq C$, the induction hypothesis gives a fully normal
derivation of $C$ whose open assumptions are subsumed by $A,B,\Delta$; one
application of $\land\mathrm E$, with major premiss the assumption $A\land B$
and that derivation as minor derivation, discharging every open occurrence of
$A$ and of $B$, gives a fully normal derivation of $C$ whose open assumptions
are subsumed by $A\land B,\Delta$, the new major premiss being again an
assumption.  If the last rule is $\lor\mathrm L^{s}$,
the induction hypothesis gives fully normal derivations of $C$ whose open
assumptions are subsumed by $A,\Gamma$ and by $B,\Delta$ respectively, and
$\lor\mathrm E$ with major premiss the assumption $A\lor B$, discharging every
open occurrence of $A$ in the one and of $B$ in the other, concludes with open
assumptions subsumed by $A\lor B,\Gamma,\Delta$.  If the last rule is $\arr\mathrm L^{s}$, from
$\Gamma\seq A$ and $B,\Delta\seq C$, the induction hypothesis gives a fully
normal derivation of $A$ with open assumptions subsumed by $\Gamma$ and a fully
normal derivation of $C$ with open assumptions subsumed by $B,\Delta$; one
application of $\arr\mathrm E$, with major premiss the assumption $A\arr B$, the
first as the minor derivation of $A$ and the second as the minor derivation
discharging every open occurrence of $B$, gives a fully normal derivation of $C$
with open assumptions subsumed by $A\arr B,\Gamma,\Delta$.

\emph{Right rules.}  Each translates to the corresponding introduction:
$\land\mathrm R^{s}$ to $\land\mathrm I$, $\lor\mathrm R_{i}$ to
$\lor\mathrm I_{i}$, and $\arr\mathrm R$, from $A,\Gamma\seq B$ to
$\Gamma\seq A\arr B$, to $\arr\mathrm I$ discharging all open occurrences of
$A$, so that the open assumptions of the result are subsumed by $\Gamma$.  An
introduction adds no elimination, so full normality is preserved.

Since every rule is thus simulated by a step preserving full normality, and the
initial sequents translate to fully normal derivations, the derivation
constructed is fully normal.
\end{proof}

\begin{thm}[Full normalization for $\Ngi$]\label{thm:norm}
For every derivation $\mathcal D$ in $\Ngi$ with $\oa(\mathcal D)=\Gamma$ and
$\ef(\mathcal D)=C$ there is a fully normal derivation $\mathcal D'$ in $\Ngi$
with $\oa(\mathcal D')\sqsubseteq\Gamma$ and $\ef(\mathcal D')=C$.  In
particular, if $\mathcal D$ has no open assumptions, neither has
$\mathcal D'$.
\end{thm}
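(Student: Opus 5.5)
The proof composes the two translations just established, with cut elimination for $\Gzi$ as the bridge.

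Let $\mathcal D$ be given, with $\oa(\mathcal D)=\Gamma$ and $\ef(\mathcal D)=C$. By Theorem~\ref{thm:ndtosc}, $\Gzi\vdash\Gamma\seq C$. The translation there may use cuts at every elimination whose major premiss is not an assumption. We therefore invoke Theorem~\ref{thm:g0cut}, in the form in which it is used at the beginning of the proof of Theorem~\ref{thm:sctond}, to replace this derivation by a cut-free derivation of the same sequent $\Gamma\seq C$. Cut elimination is obtained by transfer through $\Gip$: pass by Theorem~\ref{thm:g0equiv} to $\Gip$, eliminate cut there, and return by the cut-free half of Theorem~\ref{thm:g0equiv}. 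So a cut-free $\Gzi$-derivation of the same end-sequent exists.

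Next we apply Theorem~\ref{thm:sctond} to that cut-free derivation. This yields a fully normal derivation $\mathcal D'$ in $\Ngi$ with $\ef(\mathcal D')=C$ and $\oa(\mathcal D')\sqsubseteq\Gamma$, which is exactly the required conclusion. For the last clause, note that if $\Gamma$ is empty, then $\oa(\mathcal D')\sqsubseteq\emptyset$ forces $\oa(\mathcal D')=\emptyset$ by Definition~\ref{def:reduct}, since no formula occurs in the empty multiset.

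The only point needing care is checking that the composite really lands in the cut-free fragment. In other words, the derivation fed to Theorem~\ref{thm:sctond} must be one to which its proof applies, and the back-translation from $\Gip$ to $\Gzi$ in Theorem~\ref{thm:g0equiv} must introduce no cut. Inspecting that proof, it uses only weakening, contraction, generalized initial sequents and the operational rules, so no cut is introduced. This bookkeeping is where I expect the main obstacle, modest as it is. The argument yields normalization as existence of a normal form, not a reduction procedure, and that is all the statement claims.
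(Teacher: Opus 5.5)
Your proposal is correct and is the paper's own proof: Theorem~\ref{thm:ndtosc} gives $\Gzi\vdash\Gamma\seq C$, Theorem~\ref{thm:g0cut} lets the derivation be taken cut-free, and Theorem~\ref{thm:sctond} returns a fully normal $\mathcal D'$ with $\oa(\mathcal D')\sqsubseteq\Gamma$ and $\ef(\mathcal D')=C$. Your handling of the final clause (only the empty multiset is subsumed by the empty one) is the observation the paper makes in Corollary~\ref{cor:network}. The bookkeeping you flag as the main obstacle is in fact automatic: Cut is not a primitive rule of $\Gzi$, so $\Gzi\vdash$ already means derivability without cut.
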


\begin{proof}
By Theorem~\ref{thm:ndtosc}, $\Gzi\vdash\Gamma\seq C$; by
Theorem~\ref{thm:g0cut} the derivation may be taken cut-free; by
Theorem~\ref{thm:sctond} there is a fully normal $\mathcal D'$ with the stated
open assumptions and end-formula.
\end{proof}

\begin{cor}[The network]\label{cor:network}
For every formula $A$ the following are equivalent:
\textup{(i)} $A$ is provable in $\Ngi$ from no open assumptions;
\textup{(ii)} $\Gzi\vdash{}\seq A$; \textup{(iii)} $\Gip\vdash{}\seq A$;
\textup{(iv)} $\Gti\vdash{}\seq A$; \textup{(v)} the block $\{\sF A\}$ is
refutable in $\bti$; \textup{(vi)} $A$ is valid in all Kripke models.
\end{cor}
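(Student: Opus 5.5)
The plan is to close a cycle of implications, using only results already established, with a single detour at the natural deduction end. The equivalences (ii)$\Leftrightarrow$(iii), (iii)$\Leftrightarrow$(iv), (iv)$\Leftrightarrow$(v) and (iv)$\Leftrightarrow$(vi) are each instances of earlier results at empty antecedent and one-formula succedent.

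\begin{itemize}
\item[(ii)$\Leftrightarrow$(iii)] This is Theorem~\ref{thm:g0equiv} applied to the sequent ${}\seq A$.
\item[(iii)$\Leftrightarrow$(iv)] This is Proposition~\ref{prop:collapse} with $\Gamma$ empty and $\Delta=\{A\}$. Here $\bigvee\Delta$ is just $A$ in any association.
\item[(iv)$\Leftrightarrow$(v)] This is Theorem~\ref{thm:corr}, since $\Pi({}\seq A)=\{\sF A\}$.
\item[(iv)$\Leftrightarrow$(vi)] This is Theorem~\ref{thm:complete} together with Corollary~\ref{cor:soundg3}. Validity of ${}\seq A$ in the sense of Definition~\ref{def:kripke} is exactly the forcing of $A$ at every world of every model.
\end{itemize}

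It then remains to attach (i) to the chain via (ii).

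\begin{itemize}
\item[(i)$\Rightarrow$(ii)] A derivation $\mathcal D$ with $\oa(\mathcal D)=\emptyset$ and $\ef(\mathcal D)=A$ gives $\Gzi\vdash{}\seq A$ by Theorem~\ref{thm:ndtosc}. Any cuts that translation introduces are removed by Theorem~\ref{thm:g0cut}, though derivability alone is all that (ii) asks for.
\item[(ii)$\Rightarrow$(i)] Theorem~\ref{thm:sctond} yields a fully normal $\mathcal D$ with $\ef(\mathcal D)=A$ and $\oa(\mathcal D)\sqsubseteq\emptyset$.
\end{itemize}

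The only point that needs a word is in (ii)$\Rightarrow$(i): subsumption by the empty multiset forces $\oa(\mathcal D)$ to be empty. This holds because every formula occurring in $\oa(\mathcal D)$ would have to occur in $\emptyset$. Once that is checked, the cycle (i)$\Leftrightarrow$(ii)$\Leftrightarrow$(iii)$\Leftrightarrow$(iv)$\Leftrightarrow$(v), together with (iv)$\Leftrightarrow$(vi), completes the proof.

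I expect no real obstacle. The one thing to verify carefully is that each cited theorem is stated for arbitrary multisets, so that specialising to the empty antecedent is legitimate, and that "provable from no open assumptions" in (i) coincides with $\oa=\emptyset$.
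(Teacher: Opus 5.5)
Your proposal is correct and is essentially the paper's own proof. It uses the same citations for each equivalence: Theorem~\ref{thm:g0equiv}, Proposition~\ref{prop:collapse}, Theorem~\ref{thm:corr}, Theorem~\ref{thm:complete} with Corollary~\ref{cor:soundg3}, and Theorems~\ref{thm:ndtosc} and~\ref{thm:sctond}. It also makes the same remark in (ii)$\Rightarrow$(i), that only the empty multiset is subsumed by the empty one.
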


\begin{proof}
(i)$\Rightarrow$(ii) is Theorem~\ref{thm:ndtosc} and (ii)$\Rightarrow$(i) is
Theorem~\ref{thm:sctond}, the only multiset subsumed by the empty one being the
empty one.
(ii)$\Leftrightarrow$(iii) is Theorem~\ref{thm:g0equiv},
(iii)$\Leftrightarrow$(iv) is Proposition~\ref{prop:collapse},
(iv)$\Leftrightarrow$(v) is Theorem~\ref{thm:corr}, and
(iv)$\Leftrightarrow$(vi) is Theorem~\ref{thm:complete} with
Corollary~\ref{cor:soundg3}.
\end{proof}

The corollary makes explicit what the sequence of translations achieves.
Closure of a block, cut-freeness of a G0-style derivation and full normality of
a natural deduction derivation are three readings of one combinatorial fact,
and the normalization theorem for $\Ngi$ rests, through
Theorem~\ref{thm:g0cut}, on the cut-freeness that the block calculus enjoys by
construction.

\section{A terminating variant}\label{sec:term}

The rule $\sT\!\arr$ retains its principal formula, and this is what makes
contraction absorbable; it is also what makes naive refutation search
non-terminating, since the same implication may be decomposed again and again.
The remedy is Dyckhoff's \cite{Dyckhoff1992}: replace the single rule for an
implication on the left by four rules, keyed to the shape of its antecedent,
each of which reduces the depth of the block.  We transcribe his calculus
$\Gfip$ --- Hudelmaier's \cite{Hudelmaier1993} independently --- into block
form.

Since $\Gfip$ is a single-succedent calculus, we work with blocks containing
exactly one $\sF$-signed formula, which we call \emph{simple}; by
Corollary~\ref{cor:blockprov} nothing is lost, provability of $A$ being
refutability of the simple block $\{\sF A\}$.

\begin{dfn}[The calculus $\btd$]\label{def:btd}
The blocks of $\btd$ are simple.  Closure is as in
Definition~\ref{def:closure}.  The rules are
\[
\frac{\Pi,\ \sT(A\land B)}{\Pi,\ \sT A,\ \sT B}\ \sT\!\land
\qquad
\frac{\Pi,\ \sT(A\lor B)}{\Pi,\ \sT A\ \mid\ \Pi,\ \sT B}\ \sT\!\lor
\]
\[
\frac{\Pi,\ \sF(A\land B)}{\Pi,\ \sF A\ \mid\ \Pi,\ \sF B}\ \sF\!\land
\qquad
\frac{\Pi,\ \sF(A\lor B)}{\Pi,\ \sF A}\ \sF\!\lor_{1}
\qquad
\frac{\Pi,\ \sF(A\lor B)}{\Pi,\ \sF B}\ \sF\!\lor_{2}
\]
\[
\frac{\Pi,\ \sF(A\arr B)}{\Ptt,\ \sT A,\ \sF B}\ \sF\!\arr
\]
together with the four rules for an implication under $\sT$:
\[
\frac{\Pi,\ \sT p,\ \sT(p\arr B)}{\Pi,\ \sT p,\ \sT B}\ \sT\!\arr_{\mathrm{at}}
\qquad
\frac{\Pi,\ \sT((A\land B)\arr C)}{\Pi,\ \sT(A\arr(B\arr C))}\ \sT\!\arr_{\land}
\]
\[
\frac{\Pi,\ \sT((A\lor B)\arr C)}{\Pi,\ \sT(A\arr C),\ \sT(B\arr C)}\ \sT\!\arr_{\lor}
\qquad
\frac{\Pi,\ \sT((A\arr B)\arr C)}
     {\Ptt,\ \sT(B\arr C),\ \sF(A\arr B)\ \ \mid\ \ \Pi,\ \sT C}\ \sT\!\arr_{\arr}
\]
In $\sT\!\arr_{\mathrm{at}}$ the letter $p$ is a propositional variable, which
must already occur under the sign $\sT$ in the block and is retained; in
$\sT\!\arr_{\arr}$ the $\Ptt$ of the left child is computed after the removal
of the principal formula.  No rule applies to $\sT(\bot\arr C)$, nor to
$\sT(p\arr B)$ when $\sT p$ is absent: such formulae are inert.
\end{dfn}

Simplicity is preserved by every rule: the rules with a $\sT$-signed principal
formula leave the $\sF$-part untouched; $\sF\!\land$ and $\sF\!\lor_{i}$ replace
the single $\sF$-signed formula by one of its components in each child;
$\sF\!\arr$ replaces it by $\sF B$; and $\sT\!\arr_{\arr}$ replaces it by
$\sF(A\arr B)$ in the left child and retains it in the right one.  The two rules
$\sF\!\lor_{i}$ are alternatives, not branchings: a tableau applies one or the
other.  The rule $\sT\!\arr_{\arr}$ is the only one of the
four that branches, and its left child performs a purge, since the succedent of
the corresponding premiss of $\Gfip$ is replaced by $A\arr B$.  On simple
blocks the rule $\sF\!\arr$ purges nothing, the only $\sF$-signed formula being
the principal one; the purge survives in $\btd$ solely in
$\sT\!\arr_{\arr}$.

\begin{prop}[$\btd$ is $\Gfip$ read upside down]\label{prop:btdg4}
For every simple block $\Pi=\sT[\Gamma]\cup\{\sF C\}$: $\Pi$ is refutable in
$\btd$ if and only if $\Gamma\seq C$ is derivable in Dyckhoff's calculus
$\Gfip$ \textup{\cite[\S3]{Dyckhoff1992}}.
\end{prop}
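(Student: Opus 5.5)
The proof I would give is a transcription of the proof of Theorem~\ref{thm:corr}, with Dyckhoff's $\Gfip$ in place of $\Gti$. I would first fix the presentation of $\Gfip$ from \cite[\S3]{Dyckhoff1992}. Its initial sequents are $p,\Gamma\seq p$ and $\bot,\Gamma\seq C$. Its rules are $\land\mathrm L$, $\lor\mathrm L$, $\land\mathrm R$, $\lor\mathrm R_{1}$, $\lor\mathrm R_{2}$ and $\arr\mathrm R$ as in $\Gip$, together with the four left implication rules.
\begin{itemize}
\item[] $\mathrm{L}0\!\arr$: from $p,B,\Gamma\seq E$ infer $p,p\arr B,\Gamma\seq E$.
\item[] $\mathrm{L}\!\land\!\arr$: from $A\arr(B\arr C),\Gamma\seq E$ infer $(A\land B)\arr C,\Gamma\seq E$.
\item[] $\mathrm{L}\!\lor\!\arr$: from $A\arr C,B\arr C,\Gamma\seq E$ infer $(A\lor B)\arr C,\Gamma\seq E$.
\item[] $\mathrm{L}\!\arr\!\arr$: from $B\arr C,\Gamma\seq A\arr B$ and $C,\Gamma\seq E$ infer $(A\arr B)\arr C,\Gamma\seq E$.
\end{itemize}
Each block rule of Definition~\ref{def:btd} is then the corresponding sequent rule read from conclusion to premisses, under the map $\Gamma\seq C\mapsto\sT[\Gamma]\cup\{\sF C\}$.
\begin{itemize}
\item[] $\sT\!\arr_{\mathrm{at}}$ corresponds to $\mathrm{L}0\!\arr$, and $\sT\!\arr_{\land}$, $\sT\!\arr_{\lor}$ to $\mathrm{L}\!\land\!\arr$, $\mathrm{L}\!\lor\!\arr$.
\item[] $\sF\!\lor_{i}$ corresponds to $\lor\mathrm R_{i}$.
\item[] $\sT\!\arr_{\arr}$ corresponds to $\mathrm{L}\!\arr\!\arr$. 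Its left child $\Ptt,\sT(B\arr C),\sF(A\arr B)$, with $\Ptt$ taken after removing the principal formula, is exactly the block of the left premiss $B\arr C,\Gamma\seq A\arr B$, because the purge replaces the single succedent $E$.
\item[] Closure on $\sT p,\sF p$ corresponds to $p,\Gamma\seq p$, using simplicity: the only $\sF$-formula is the succedent. Closure on $\sT\bot$ corresponds to $\bot,\Gamma\seq C$.
\end{itemize}

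\emph{From $\Gfip$ to blocks.} I would argue by induction on the pair $(n,m)$ ordered lexicographically, where $n$ is the height of the derivation and $m$ the number of occurrences in $\Gamma$, exactly as in Theorem~\ref{thm:corr}. If $\Gamma$ contains a repetition, I would delete one copy by height-preserving admissibility of contraction in $\Gfip$ \cite{DyckhoffNegri2000}. This leaves the block unchanged and lowers $m$. Once $\Gamma$ is repetition-free, each last rule is matched by one application of the corresponding block rule at the root, and the induction hypothesis applies to the premisses. Two points need care. In $\mathrm{L}0\!\arr$, the atom $p$ is retained on both sides, so the side condition of $\sT\!\arr_{\mathrm{at}}$ is met. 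In the right premiss of $\mathrm{L}\!\arr\!\arr$, the block $\Pi\setminus\{\sT((A\arr B)\arr C)\}\cup\{\sT C\}$ is the block of $C,\Gamma_{0}\seq E$.

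\emph{From blocks to $\Gfip$.} I would argue by induction on the height of a closed tableau. At each root step the induction hypothesis gives the premiss sequents, but with antecedents read as sets. When a component such as $\sT A$ or $\sT B$ was already present in the block, I would restore the missing copies by height-preserving admissibility of weakening in $\Gfip$, which is straightforward. The sequent rule then yields $\Gamma_{\Pi}\seq C$. Simplicity guarantees that every block met has exactly one $\sF$-formula, so it always corresponds to a single-succedent sequent.

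I expect the main obstacle to be the set/multiset discrepancy in the first direction. It needs contraction to be admissible in $\Gfip$ without increase of height, and this is a genuinely nontrivial theorem: the rules $\mathrm{L}\!\arr\!\arr$ and $\mathrm{L}0\!\arr$ do not repeat their principal formula. I would cite Dyckhoff--Negri \cite{DyckhoffNegri2000} rather than reprove it. If only plain admissibility without height preservation were available, I would replace the induction on height by an induction on Dyckhoff's multiset weight of the sequent. Contraction does not increase that weight, and every rule strictly decreases it from conclusion to premisses.
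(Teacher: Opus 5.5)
Your proposal is correct and follows the paper's own proof. The paper likewise reads the rules of $\Gfip$ upside down, rule by rule, under $\Pi=\sT[\Gamma]\cup\{\sF C\}$, ``exactly as in the proof of Theorem~\ref{thm:corr}''.

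You are more careful than the paper on one point. The paper cites only the admissibility of weakening in $\Gfip$, which covers the passage from blocks to sequents. The passage from sequents to blocks also needs (height-preserving) contraction in $\Gfip$ to remove repetitions, and you supply it, either via Dyckhoff--Negri or via your weight-induction fallback.
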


\begin{proof}
The rules of Definition~\ref{def:btd} are, one by one, the rules of $\Gfip$
read from conclusion to premisses under the translation
$\Pi=\sT[\Gamma]\cup\{\sF C\}$, exactly as in the proof of
Theorem~\ref{thm:corr}: $\sT\!\land$, $\sT\!\lor$, $\sF\!\land$,
$\sF\!\lor_{i}$ and $\sF\!\arr$ correspond to $\land\mathrm L$,
$\lor\mathrm L$, $\land\mathrm R$, $\lor\mathrm R_{i}$ and $\arr\mathrm R$, and
the four rules $\sT\!\arr_{\ast}$ correspond to the four rules
$\mathrm L\arr_{\mathrm{at}}$, $\mathrm L\arr_{\land}$, $\mathrm L\arr_{\lor}$,
$\mathrm L\arr_{\arr}$ of $\Gfip$; closure on $\sT p,\sF p$ corresponds to the
initial sequents $p,\Gamma\seq p$ and closure on $\sT\bot$ to $\bot,\Gamma\seq
C$.  The verification is a rule-by-rule inspection, and the adjunction of
copies needed to pass from sets to multisets is licensed, as before, by the
admissibility of weakening in $\Gfip$ \cite[Lemma~1]{Dyckhoff1992}.
\end{proof}

\begin{thm}[Termination]\label{thm:term}
Define a weight function on formulae by
\[
\wt(p)=\wt(\bot)=1,\qquad
\wt(A\land B)=\wt(A)+\wt(B)+2,
\]
\[
\wt(A\lor B)=\wt(A)+\wt(B)+1,\qquad
\wt(A\arr B)=\wt(A)+\wt(B)+1,
\]
and let $\mu(\Pi)$ be the multiset of the weights of the formulae occurring in
$\Pi$, a signed formula $\sT A$ or $\sF A$ contributing one occurrence of
$\wt(A)$.  Then every rule of $\btd$ strictly decreases $\mu$ in the multiset
ordering on natural numbers.  Consequently no branch of a $\btd$ tableau is
infinite, every $\btd$ tableau is finite, and exhaustive refutation search in
$\btd$ terminates.
\end{thm}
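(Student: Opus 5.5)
The plan is to reduce the statement to one general observation about the multiset ordering and then check the rules of Definition~\ref{def:btd} against it one at a time. The observation: suppose a child $\Pi'$ of $\Pi$ with principal formula $\varphi$ satisfies $\Pi'\subseteq(\Pi\setminus\{\varphi\})\cup N$, where every signed formula of $N$ has weight strictly below $\wt(\varphi)$. Then $\mu(\Pi')<\mu(\Pi)$ in the Dershowitz--Manna multiset ordering.

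To prove this, I pass from $\mu(\Pi)$ to $\mu(\Pi')$ in three moves: remove the occurrence $\wt(\varphi)$; add the weights of $N$, all smaller; delete the weights of whatever is purged or is absorbed because a new signed formula already lay in $\Pi$. The first two moves together are exactly a multiset decrease. The third only removes elements, which preserves the decrease. The point to handle with care is that blocks are sets, so $\mu$ counts each signed formula once. Identification of a new formula with an old one therefore lowers multiplicities rather than raising them, and the purge of $\sF\!\arr$ and of $\sT\!\arr_{\arr}$ is likewise a pure deletion. Both fall under the inclusion $\Pi'\subseteq(\Pi\setminus\{\varphi\})\cup N$.

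It then remains to exhibit $N$ for each rule and compare weights, writing $a,b,c$ for $\wt(A),\wt(B),\wt(C)$.

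For $\sT\!\land$, $\sT\!\lor$, $\sF\!\land$, $\sF\!\lor_{i}$ and $\sF\!\arr$, the set $N$ consists of signed immediate subformulae, which are lighter because every connective adds at least $1$.

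For $\sT\!\arr_{\mathrm{at}}$ we have $N=\{\sT B\}$ and $b<1+b+1$; the retained $\sT p$ is in $\Pi\setminus\{\varphi\}$.

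For $\sT\!\arr_{\land}$ the comparison is $a+b+c+2<a+b+c+3$. This is the only place where the extra $2$ in the weight of a conjunction is needed, and it is the step I would check first.

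For $\sT\!\arr_{\lor}$ the two new formulae weigh $a+c+1$ and $b+c+1$, each below $a+b+c+2$ since $a,b\ge1$.

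For $\sT\!\arr_{\arr}$, the left child has $N=\{\sT(B\arr C),\sF(A\arr B)\}$, of weights $b+c+1$ and $a+b+1$, both below $a+b+c+2$; its context is $\Ptt$, a subset of $\Pi\setminus\{\varphi\}$. The right child has $N=\{\sT C\}$.

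Finally, the multiset ordering on $\mathbb N$ is well-founded by the Dershowitz--Manna theorem, so no branch is infinite. Every rule has at most two children, so a tableau is a finitely branching tree without infinite branches, and is finite by K\"onig's lemma. For exhaustive search, each block admits only finitely many rule instances: finitely many principal formulae, two choices for $\sF\!\lor_{i}$, and the subformula property. The search tree over all choices is therefore also finitely branching and well-founded, hence finite.

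I expect the main obstacle to be not the arithmetic but stating the general observation so that set-absorption and the purge are visibly covered. Once that is in place, the rule check reduces to the weight inequalities above.
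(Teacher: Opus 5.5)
Your proposal is correct and follows the paper's proof essentially step for step. It uses the same multiset-decrease criterion (the principal weight is removed, only strictly lighter weights are added, and set-absorption and the purge only delete), the same rule-by-rule weight comparisons, and the same appeal to Dershowitz--Manna well-foundedness and K\"onig's lemma. One minor slip: finiteness of the search tree needs only that a finite block admits finitely many rule instances, not the subformula property, which $\btd$ in fact lacks ($\sT\!\arr_{\land}$, for instance, introduces $B\arr C$).
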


\begin{proof}
Write $a=\wt(A)$, $b=\wt(B)$, $c=\wt(C)$.  In the multiset ordering, a step
that removes a multiset $X$ and adds a multiset $Y$ decreases the measure
provided every element of $Y$ is strictly smaller than some element of $X$; we
verify this for each rule, noting that the passage from a block to its child
removes at least the principal formula and adds at most the displayed ones.
Since blocks are sets, a formula already present is not added twice, which can
only decrease the measure further.

$\sT\!\land$ removes the weight $a+b+2$ and adds $a$ and $b$, both smaller.
$\sT\!\lor$ and $\sF\!\lor_{i}$ remove $a+b+1$ and add $a$ or $b$.
$\sF\!\land$ removes $a+b+2$ and adds $a$ or $b$.  $\sF\!\arr$ removes
$a+b+1$, together with any further $\sF$-signed formulae purged, and adds $a$
and $b$.  $\sT\!\arr_{\mathrm{at}}$ removes $\wt(p\arr B)=b+2$ and adds $b$,
the formula $\sT p$ being retained unchanged.

$\sT\!\arr_{\land}$ removes
$\wt((A\land B)\arr C)=(a+b+2)+c+1=a+b+c+3$ and adds
$\wt(A\arr(B\arr C))=a+(b+c+1)+1=a+b+c+2$, which is strictly smaller.  It is
for this rule that conjunction must be given weight $\wt(A)+\wt(B)+2$; with
weight $\wt(A)+\wt(B)+1$ the two sides would be equal.

$\sT\!\arr_{\lor}$ removes $\wt((A\lor B)\arr C)=(a+b+1)+c+1=a+b+c+2$ and adds
$\wt(A\arr C)=a+c+1$ and $\wt(B\arr C)=b+c+1$; since $b\ge1$ and $a\ge1$, both
are strictly smaller than $a+b+c+2$.  Note that the \emph{sum} of the two
weights need not be smaller; it is the multiset ordering, and not the sum, that
decreases.

$\sT\!\arr_{\arr}$ removes $\wt((A\arr B)\arr C)=(a+b+1)+c+1=a+b+c+2$.  In the
left child it adds $\wt(B\arr C)=b+c+1$ and $\wt(A\arr B)=a+b+1$, both strictly
smaller than $a+b+c+2$ because $a\ge1$ and $c\ge1$, and it removes in addition
the $\sF$-signed formula of the block.  In the right child it adds $c$, again
strictly smaller.

The multiset ordering on finite multisets of natural numbers is well founded
\cite{DershowitzManna1979}, so no branch admits an infinite descending sequence
of measures, hence no branch is infinite.  A $\btd$ tableau is finitely
branching, every rule having at most two children, so by K\"onig's lemma it is
finite.
\end{proof}

\begin{thm}[Adequacy of $\btd$ and decidability]\label{thm:btdadeq}
A simple block $\Pi=\sT[\Gamma]\cup\{\sF C\}$ is refutable in $\btd$ if and
only if it is refutable in $\bti$.  Hence refutation search in $\btd$ is a
terminating decision procedure for intuitionistic propositional logic.
\end{thm}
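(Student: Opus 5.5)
The proof will be a chain of equivalences, each link already established, so no new combinatorics is needed. Let $\Pi=\sT[\Gamma]\cup\{\sF C\}$ be simple; we read $\Gamma$ as a set, which is harmless since blocks are sets and weakening and contraction are admissible in every sequent calculus involved.

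First, by Proposition~\ref{prop:btdg4}, $\Pi$ is refutable in $\btd$ if and only if $\Gfip\vdash\Gamma\seq C$. Second, by Dyckhoff's theorem \cite{Dyckhoff1992}, which the introduction says we cite rather than reprove, $\Gfip$ and $\Gip$ derive the same sequents. Hence this holds if and only if $\Gip\vdash\Gamma\seq C$. Third, Proposition~\ref{prop:collapse} with $\Delta=\{C\}$, where $\bigvee\{C\}=C$, gives that this holds if and only if $\Gti\vdash\Gamma\seq C$. Finally, by Theorem~\ref{thm:corr}, since $\Pi(\Gamma\seq C)=\Pi$, this holds if and only if $\Pi$ is refutable in $\bti$.

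Composing the four equivalences gives the first claim. The only point needing care is the form of Dyckhoff's result. It is usually stated as the equivalence of $\Gfip$ with $\LJ$ (or with $\Gip$) for sequents with one succedent formula, and that is exactly what is used here. If it is only available for theorems ${}\seq A$, I would reduce a sequent with context to a theorem via $\bigwedge\Gamma\arr C$. That reduction needs invertibility of the relevant rules in $\Gfip$, which Dyckhoff also proves. This bookkeeping is the step I expect to be the main (mild) obstacle.

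For the decision procedure, fix a formula $A$ and search for a closed $\btd$-tableau for the simple block $\{\sF A\}$. By Theorem~\ref{thm:term} every $\btd$-tableau is finite. Each block has only finitely many applicable rule instances, because $\sF\!\lor_1$ and $\sF\!\lor_2$ are alternatives and principal formulae are drawn from a finite set. So the space of tableaux for $\{\sF A\}$ is a finitely branching tree of finite depth, and exhaustive search terminates. It either finds a closed tableau or certifies that none exists.

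By the first part and Corollary~\ref{cor:blockprov}, a closed tableau exists exactly when $A$ is an intuitionistic theorem. More generally, for a sequent $\Gamma\seq C$ the same search on $\sT[\Gamma]\cup\{\sF C\}$ decides $\Gip$-derivability, and hence intuitionistic validity by Corollary~\ref{cor:network}.
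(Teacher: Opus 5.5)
Your proof is correct and follows the paper's route: Proposition~\ref{prop:btdg4}, then Dyckhoff's equivalence of $\Gfip$ with $\Gip$, then back to $\bti$ through Theorem~\ref{thm:corr}, with termination supplied by Theorem~\ref{thm:term}. Invoking Proposition~\ref{prop:collapse} with $\Delta=\{C\}$ is in fact the precise form of the link that the paper attributes to Corollary~\ref{cor:blockprov}, which on its own covers only sequents with an empty antecedent.
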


\begin{proof}
By Proposition~\ref{prop:btdg4}, $\Pi$ is refutable in $\btd$ if and only if
$\Gfip\vdash\Gamma\seq C$.  By Dyckhoff's theorem \cite[Thm.~2]{Dyckhoff1992},
$\Gfip$ and $\LJ$ --- equivalently $\Gip$ --- derive the same sequents; see also
\cite{Hudelmaier1993} and \cite{DyckhoffNegri2000}.  By
Corollary~\ref{cor:blockprov} and Theorem~\ref{thm:corr}, $\Gip\vdash\Gamma\seq
C$ if and only if $\Pi$ is refutable in $\bti$.  Termination is
Theorem~\ref{thm:term}, and since a tableau is finite and each rule application
is effective, exhaustive search decides refutability.
\end{proof}

We have deliberately obtained the adequacy of $\btd$ from a published theorem
rather than reproved it: the syntactic simulation of $\arr\mathrm L$ in
$\Gfip$, which is what the difficult direction amounts to, is a substantial
argument, and its transcription into block notation would add length without
adding insight.  What the block presentation does contribute is the
verification of Theorem~\ref{thm:term}, in which the role of each rule in the
descent of the measure is visible on the face of the block.

\section{Blocks, sequents and natural deduction compared}\label{sec:comp}

The systems of the preceding sections prove the same theorems, and
Corollary~\ref{cor:network} says so.  What they do not share is the way in
which they say it, and it is worth setting the differences out, because the
intuitionistic case brings to light distinctions that the classical case
conceals.  We proceed along eight axes.

\subsection{The datum and its orientation}

A block is a finite set of signed formulae and is read by \emph{analysis}: one
begins with the signed formulae expressing the refutation to be carried out and
descends, decomposing.  A sequent is a pair of multisets and is read by
\emph{synthesis}: one begins with initial sequents and ascends, composing.  A
natural deduction derivation occupies the middle ground, in which the
eliminations decompose while the introductions compose, and full normal form is
precisely the regime in which the two do not interfere.

The three notations distribute the same information differently.  What the
sequent expresses by the position of a formula with respect to the arrow, the
block expresses by a sign attached to the formula itself, and natural deduction
by the distinction between an assumption and a conclusion.  This is not a
matter of taste in the intuitionistic setting.  In the classical block calculus
the sign is dispensable, because the involutive negation can carry it: a
formula is asserted if it appears unnegated and denied if it appears negated,
and $\lnot\lnot A$ collapses to $A$ so that the two states exhaust the
possibilities.  Intuitionistically the collapse fails, $\lnot\lnot A$ being
weaker than $A$, and the sign must be made explicit.  Signs are thus not a
notational convenience imported from Smullyan but a necessity imposed by the
logic.

\subsection{Where intuitionism resides}

In $\bti$ the whole of the intuitionistic restriction is the purge, and the
purge occurs in one rule, $\sF\!\arr$ (with its derived instance $\sF\!\lnot$).
In $\Gti$ it is the discarding of the succedent context in $\arr\mathrm R$.  In
$\Gip$, $\Gzi$ and $\LJ$ it is not a feature of any rule but of the
\emph{format}: there is only ever one formula in the succedent, so nothing is
left to discard.  In $\Ngi$ and in $\NJ$ it is again a feature of the format,
namely that a derivation has one conclusion, together with the absence of the
classical reductio.

These four descriptions are equivalent, by the translations of
Sections~\ref{sec:g3}--\ref{sec:nd}; but they are not equally informative.  The
single-succedent presentations record the restriction by refusing to state the
alternative, so that one cannot see, inside the calculus, what would have to be
given up for the logic to become classical.  The multiple-succedent
presentations state the alternative and then forbid its transport, and this is
why the difference between the classical and the intuitionistic calculus can be
localized in a single deletion, as Example~\ref{ex:peirce} shows.  The block
calculus inherits the second, more informative option, since a block carries
several $\sF$-signed formulae as a matter of course.

\subsection{The structural rules}

The four systems exhibit the four possible attitudes towards weakening and
contraction.  In $\bti$ they are \emph{absorbed}: contraction into the
set-theoretic reading of blocks and into the persistence of $\sT\!\arr$,
weakening into the closure criterion.  In $\Gti$ and $\Gip$ they are
\emph{height-preserving admissible}, which is the sequent-calculus form of the
same fact.  In $\Gzi$, and in Ili\'c's $GI$, they are \emph{primitive}.  In
$\Ngi$ they are \emph{implicit} in the discipline of discharge: contraction is
multiple discharge, weakening is vacuous discharge.

It is at this point that the comparison with \cite{Ilic2016} is most
instructive.  Ili\'c replaces the initial formulae of natural deduction by
\emph{initial rules} $\alpha,\alpha_{1},\dots,\alpha_{n}/\alpha$, with the
effect that no assumption is ever discharged vacuously, every formula
discharged by a rule really occurring in the upper part of some initial rule;
and she uses assumption labels to control the identification of occurrences.
Her derivations are then normal by construction, and by restricting the initial
rules or forbidding multiple discharge one obtains natural deduction systems
for $BCK$ and for a relevant logic.  The device and the absorption of the block
calculus pursue the same end from opposite directions.  Ili\'c makes the
structural rules \emph{visible and controllable}, so that they can be dropped
one at a time; the block calculus makes them \emph{invisible}, so that they
need never be mentioned.  The two are complementary rather than rival, and each
pays for its advantage: a calculus in which contraction has been absorbed into
the data structure cannot be weakened to a contraction-free logic by deleting a
rule, since there is no rule to delete, whereas a calculus that carries the
structural rules explicitly must prove their admissibility elsewhere or forgo
analyticity.

\subsection{Contraction on the implication, in four guises}\label{subsec:four}

One phenomenon recurs in four notations, and recognizing it as one phenomenon
is, we think, the main expository gain of the comparison.  In $\bti$ the rule
$\sT\!\arr$ retains its principal formula in the child that carries $\sF A$.
In $\Gti$ and $\Gip$ the rule $\arr\mathrm L$ repeats the principal formula in
the left premiss.  In $\Gzi$ the repetition is gone and its work is done by the
primitive $\ctrL$.  In $\Ngi$ it is the multiple discharge of the assumption
$A\arr B$ by the several applications of $\arr\mathrm E$ that use it.

The four are related by the translations already given, and each is the
efficient form for its own purpose: persistence for a calculus without
structural rules, repetition for the admissibility proof of contraction,
explicit contraction for the correspondence with discharge, multiple discharge
for the natural deduction reading.  It should also be said that this is the
feature that costs the most.  It is the persistence of $\sT\!\arr$ that makes
naive refutation search diverge, and Section~\ref{sec:term} is entirely devoted
to buying it out.

\subsection{Analyticity and normal forms}

Analyticity is realized in three modes.  In $\bti$ it holds \emph{by
construction}: every signed formula produced by a rule is a signed immediate
subformula of the principal one, so Proposition~\ref{prop:subf} is a matter of
inspection.  In $\Gti$ and $\Gip$ it holds \emph{by theorem}: the calculi admit
cut and the subformula property follows once cut has been shown eliminable.  In
$\Ngi$ it holds \emph{by normalization}: Theorem~\ref{thm:norm} brings every
derivation to a form in which no elimination is applied to a formula that has
just been introduced.  Corollary~\ref{cor:network} says that the three are
readings of one fact.

The intuitionistic case adds an asymmetry with no classical counterpart.  In
the classical calculi the multiple-succedent format is the natural one and cut
elimination goes through in it by the usual permutations.  Here, as
Remark~\ref{rem:cutobstruction} shows, the permutation of cut past
$\arr\mathrm R$ in the multiple-succedent calculus fails, precisely because
that rule empties the succedent; the obstruction vanishes in the
single-succedent calculus, where there is nothing to empty.  The rule that
carries the intuitionistic content is thus also the rule that obstructs the
standard syntactic treatment of cut, and one is driven either to a semantic
argument, as in Corollary~\ref{cor:semcut}, or to the single-succedent detour,
as in Theorem~\ref{thm:g0cut}.

\subsection{Invertibility and proof search}

By Lemma~\ref{lem:g3inv} and Theorem~\ref{thm:corr}, every rule of $\bti$ is
invertible except $\sF\!\arr$; and $\sF\!\arr$ is the intuitionistic rule.  The
same holds on the sequent side for $\arr\mathrm R$ and, on the natural deduction
side, for $\arr\mathrm I$.  The proposition is worth stating in the form it
takes for search: in a refutation, all choices are \emph{don't-care} except the
choice of which $\sF$-signed implication to decompose, which is
\emph{don't-know}, and except the branchings, which must all close.  The whole
combinatorial difficulty of intuitionistic proof search is concentrated at the
purge, and the reason is now visible: an application of $\sF\!\arr$ destroys
information --- the other $\sF$-signed formulae --- and destroyed information
cannot be recovered by backtracking within the branch.

In $\btd$ the situation is slightly different, since $\sT\!\arr_{\arr}$ is not
invertible either: its left child replaces the $\sF$-signed formula.  In
sequent notation, $(p\arr q)\arr r,\,r\seq r$ is an initial sequent, while the
left premiss that $\sT\!\arr_{\arr}$ would require, $q\arr r,\,r\seq p\arr q$,
is falsified by the model $w\le v$ with $V(w)=\{r\}$ and $V(v)=\{r,p\}$.  The calculus buys termination at the
cost of a second point of genuine choice.  This is the trade-off that Dyckhoff's calculus makes, and
it is the same trade-off in block notation.

\subsection{Termination, decidability, and the size of refutations}

Refutation search in $\bti$ need not terminate, and
Corollary~\ref{cor:dec} obtains decidability only through the finite model
property, that is, by a detour through semantics.  In $\Gti$ the same
difficulty is met by loop-checking, in $\Ngi$ by a measure on normalization,
and in $\btd$ by the depth-reducing rules, for which Theorem~\ref{thm:term}
supplies an explicit well-founded measure.  Of the three remedies only the last
is a genuine syntactic decision procedure, and only for the fragment of simple
blocks --- which, by Corollary~\ref{cor:blockprov}, is no restriction.

Against these gains one must set what analyticity costs, and the accounting is
the same here as in the classical case, only worse.  Cut-free and normal proofs
are not always faithful to mathematical reasoning as practised, which composes
lemmas rather than decomposing a thesis; the elimination of cut can increase the
size of a derivation enormously \cite{Boolos1984}; and analytic propositional
calculi admit, in general, no short proofs \cite{Statman1978}.  It was in
response to these anomalies that the line of \emph{analytic cut} was developed
\cite{DAgostinoMondadori1994}, in which cut is restricted to subformulae of the
conclusion rather than eliminated.  To this the intuitionistic case adds a
difference of complexity class that no choice of calculus can remove:
derivability in intuitionistic propositional logic is PSPACE-complete
\cite{Statman1979}, where classical propositional validity is only coNP-complete.
The block calculus inherits both sides of the balance --- full controllability
of derivations, and no promise of short ones.

\subsection{Countermodels and meaning}

A completed open branch of a $\bti$ tableau yields a finite Kripke countermodel
directly, by Corollary~\ref{cor:completebt}, and the worlds of the countermodel
are the segments of the branch between two applications of $\sF\!\arr$: the
purge is not merely a syntactic restriction but the syntactic trace of the
passage to a later world.  Examples~\ref{ex:lem} and \ref{ex:peirce} exhibit
the reading in the two cases that matter.  In the sequent calculi the
countermodel is extracted from a failed proof search, which is the same
information presented upside down; in natural deduction there is no natural
extraction at all.

There is a final point, of a different order, on which the three formalisms
differ, and it is one on which the block calculus has something to say to
current work in proof-theoretic semantics.  $\bti$ has no introduction rules.
It is a purely eliminative presentation: every rule takes a signed compound
apart, and the meaning of a connective, so far as the calculus displays it,
consists in what may be done with a formula of that form.  This is the
consequentialist reading of validity that Schroeder-Heister and, more recently,
Gheorghiu and Pym \cite{GheorghiuPym2026} have developed in the setting of
base-extension semantics, following Sandqvist \cite{Sandqvist2015}.  Harmony,
by contrast, is not visible in $\bti$ at all, there being nothing for the
eliminations to be in harmony with; it becomes visible only once the
introduction rules are restored, in $\Gzi$ and $\Ngi$.  The network of
Section~\ref{sec:nd} is, from this angle, what supplies the block calculus with
a theory of meaning: it erects the constructive scaffolding of introductions and
normal forms above an analytic base that has none.

\subsection{Transport, reiteration, and the size of derivations}

There is a format of natural deduction that we have not so far brought into the
comparison, and it turns out to be the one that the block calculus most closely
resembles on the very point at which it differs from Gentzen's.  In the
Ja\'skowski--Fitch presentation \cite{Jaskowski1934,Fitch1952} a derivation is a
sequence of lines with nested subproofs, and a formula proved outside a subproof
becomes available inside it only by an explicit act of \emph{reiteration}. The system is then tuned by restricting what may be reiterated.  As Hazen and
Pelletier observe \cite{HazenPelletier2014}, the modal logics $\mathsf T$,
$\mathsf{S4}$, $\mathsf{S5}$ and $\mathsf B$ are obtained from one another by
varying what may be reiterated into a strict subproof, and in what form.  The
corresponding Gentzen-style systems must impose the same restrictions as side
conditions on the undischarged hypotheses standing above an inference --- a
formulation they judge, with reason, less perspicuous.

The transition rule $\sF\!\arr$ is a restriction on reiteration in precisely
this sense.  Passing to the child block opens what Fitch would call a strict
subproof, and the restriction reads: only $\sT$-signed formulae may be
reiterated into it.  The point is not a matter of analogy.  It supplies the
uniform template along which the calculus can be varied, and the variations of
Remark~\ref{rem:neigh} are instances of it: relaxing the restriction so as to
admit some $\sF$-signed formulae, under conditions on their shape, moves one
into the intermediate logics; adding a second kind of transition, with its own
restriction, is what a modal extension will require.  A condition on what
crosses a single node is easier to state, and to vary, than a condition on the
undischarged hypotheses of a whole subtree.

The comparison also has a quantitative side, and it is best stated precisely
rather than gestured at. The two translations of Section~\ref{sec:nd} are each
linear in the size of their input: both are step-by-step simulations, and
neither duplicates a subderivation.  The whole of the growth in Theorem~\ref{thm:norm} is therefore confined to the cut-elimination step, and whatever bound holds there is inherited unchanged, the translations contributing none of their own.  Full normalization should be read
as a transformation theorem and not as a statement about the size of normal
derivations.  As for the derivations themselves, a block and a sequent derivation carry the same information at the same cost, each node rewriting its entire context, whereas a linear derivation records the context once and refers back to it by line number.  The block
format buys the absence of every kind of bookkeeping --- no discharge labels, no
line numbers, no reiteration steps --- at the price of transporting the context
in full, and it is a fair exchange only because the context is a set, so that
what is transported never grows by repetition.

\subsection{Summary}

The distinctions drawn in this section are collected in
Table~\ref{tab:compare}, which should be read with two cautions.  A table
registers differences but not their weight, and here the weight is unevenly
distributed: the first three rows fix the format, and most of what follows is a
consequence of them rather than an independent datum.  And the fourth row is not
to be read as the others are.  It does not record four features but one
combinatorial phenomenon under four names, as Section~\ref{subsec:four} argued;
the entries are notational variants of each other, whereas the entries of the
fifth row are genuinely different places at which one and the same restriction
can be lodged.

\begin{table}[htb]
\centering
\footnotesize
\setlength{\tabcolsep}{4pt}
\begin{tabular}{@{}lllll@{}}
\toprule
 & $\bti$ & $\Gti$ & $\Gzi$ & $\Ngi$ \\
\midrule
datum & signed block & multi-succ.\ sequent & single-succ.\ sequent & derivation tree \\
orientation & analysis & synthesis & synthesis & mixed \\
structural rules & absorbed & hp-admissible & primitive & in the discharge \\
contraction on $\arr$ & persistence & repetition & $\ctrL$ & multiple discharge \\
intuitionism resides in & the purge & $\arr\mathrm R$ empties $\Delta$ & the format & the format \\
non-invertible rule & $\sF\!\arr$ & $\arr\mathrm R$ & $\arr\mathrm R$ & $\arr\mathrm I$ \\
analyticity & by construction & by cut elim.\ & by cut elim.\ & by normalization \\
countermodels & from open branches & from failed search & from failed search & --- \\
\bottomrule
\end{tabular}
\caption{The four presentations compared.  The upper rows fix the format, the
lower ones record the proof-theoretic behaviour that follows from it.  Read from
left to right, the fifth row shows the locus of intuitionism migrating from a
rule of the calculus to a restriction on the shape of its sequents and
derivations, and the sixth shows that wherever it is lodged it is the same rule,
under its several names, that fails to be invertible.}
\label{tab:compare}
\end{table}

\section{Concluding remarks}\label{sec:remarks}

\begin{rem}[The first-order extension]\label{rem:fo}
The first-order block calculus of \cite{CuconatoIM} adds the rules of type
$\gamma$, which instantiate a universal formula on a parameter of the branch
and retain the principal formula, and of type $\delta$, which instantiate on a
fresh parameter and consume it.  In the intuitionistic setting these become the
rules for $\sT\forall$, $\sF\exists$ (of type $\gamma$) and $\sT\exists$,
$\sF\forall$ (of type $\delta$), with the proviso that $\sF\forall$ is also a
rule of transition: falsifying a universal formula may require passing to a
later world, and its rule must therefore purge the $\sF$-part exactly as
$\sF\!\arr$ does.  Two restrictions of different origin then coexist in the same
rule, freshness of the parameter and deletion of the context, and it would be
worth asking whether they can be given a common measure.  The correspondence
with the first-order multiple-succedent calculus and with general-elimination
natural deduction extends along the lines of Section~\ref{sec:nd}; termination,
of course, does not.
\end{rem}

\begin{rem}[The modal reading]\label{rem:modal}
The observation of Section~\ref{sec:intro}, that the intuitionistic
propositional rules behave like quantificational or modal rules, is more than
an analogy.  Under the G\"odel--McKinsey--Tarski translation \cite{Godel1933}
an intuitionistic formula is mapped into $\mathsf{S4}$ by prefixing $\Box$ to
its subformulae, and the purge of $\sF\!\arr$ becomes the restriction on the
premiss of $\Box\mathrm R$, the rule that in a cut-free $\mathsf{S4}$ calculus
retains only the boxed formulae of the antecedent.  The same restriction is
what the labelled treatment of modal logic replaces by an explicit relational
atom \cite{Negri2005}, and it is what the labelled block calculus records by a
change of label.  A labelled variant of $\bti$, in which the purge is replaced
by the introduction of a fresh label $v$ with $w\le v$, is therefore available;
it trades the absorption of the structural rules for direct access to the
semantics, and we leave its development for another occasion.
\end{rem}

\begin{rem}[Neighbouring logics]\label{rem:neigh}
Two directions of variation are open, and by Section~\ref{sec:comp} both are
variations on a single parameter, the restriction governing what may cross a
transition.  Weakening the restriction --- allowing some $\sF$-signed formulae
to be retained, under conditions on their shape --- yields the intermediate
logics; the calculus for the logic of
G\"odel and Dummett and that for Jankov's logic are the natural first cases, and
they can be read off the corresponding multiple-succedent calculi through
Theorem~\ref{thm:corr}.  Restricting the discharge on the natural deduction
side, in the manner of Ili\'c's initial rules \cite{Ilic2016}, yields the
substructural neighbours; the block calculus cannot follow it there, for the
reason given in Section~\ref{sec:comp}, and this marks the boundary of the
present approach rather than a defect of it.
\end{rem}

Let us close on the methodological point.  The classical block calculus was
designed for a single purpose, to combine the readability of sequent notation
with the branching economy of tableaux, and there was no reason to expect it to
survive the passage to a logic in which the two features it relies on ---
involutive negation and unrestricted transport of the context --- both fail.
It survives because those two features turn out to be separable from the design
and replaceable, the first by the sign discipline and the second by the purge.
That the resulting calculus differs from its classical ancestor in exactly one
respect, and that this respect is exactly what Kripke semantics prescribes, is
the sort of agreement between a formalism and its intended reading that makes
the formalism an instrument of analysis and not merely of notation.

\end{document}